\documentclass[reqno]{amsart}
\usepackage{amssymb}



\newtheorem{theorem}{Theorem}[section]

\newtheorem{lemma}[theorem]{Lemma}

\newtheorem{cor}[theorem]{Corollary}
\theoremstyle{definition}
\newtheorem{definition}[theorem]{Definition}

\newtheorem{remark}[theorem]{Remark}

\allowdisplaybreaks
\numberwithin{equation}{section}
\begin{document}

\title[Diagonal hypersurfaces and elliptic curves over finite fields]
{Diagonal hypersurfaces and elliptic curves over finite fields and hypergeometric functions}


 \author{Sulakashna}
\address{Department of Mathematics, Indian Institute of Technology Guwahati, North Guwahati, Guwahati-781039, Assam, INDIA}
\curraddr{}
\email{sulakash@iitg.ac.in}
\author{Rupam Barman}
\address{Department of Mathematics, Indian Institute of Technology Guwahati, North Guwahati, Guwahati-781039, Assam, INDIA}
\curraddr{}
\email{rupam@iitg.ac.in}

\thanks{}


\subjclass[2010]{11G25, 33E50, 11S80, 11T24.}
\date{July 22, 2023, version-1}
\keywords{character sum; hypergeometric series; $p$-adic gamma function; diagonal hypersurfaces; elliptic curves.}
\begin{abstract} 
Let $D_\lambda^{d,k}$ denote the family of diagonal hypersurface over a finite field $\mathbb{F}_q$ given by
\begin{align*}
D_\lambda^{d,k}:X_1^d+X_2^d=\lambda dX_1^kx_2^{d-k},
\end{align*}
where $d\geq2$, $1\leq k\leq d-1$, and $\gcd(d,k)=1$. Let $\#D^{d,k}_\lambda$ denote the number of points on $D_\lambda^{d,k}$ in $\mathbb{P}^{1}(\mathbb{F}_q)$. It is easy to see that $\#D_\lambda^{d,k}$ is equal to the number of distinct zeros of the polynomial $y^d-d\lambda y^k+1\in \mathbb{F}_q[y]$ in $\mathbb{F}_q$. In this article, we prove that $\#D^{d,k}_\lambda$ is also equal to the number of distinct zeros of the polynomial $y^{d-k}(1-y)^k-(d\lambda)^{-d}$ in $\mathbb{F}_q$. We express the number of distinct zeros of the polynomial $y^{d-k}(1-y)^k-(d\lambda)^{-d}$ in terms of a $p$-adic hypergeometric function. Next, we derive summation identities for the $p$-adic hypergeometric functions appearing in the expressions for $\#D^{d,k}_\lambda$. Finally, as an application of the summation identities, we prove identities for the trace of Frobenius endomorphism on certain families of elliptic curves.
\end{abstract}
\maketitle
\section{Introduction and statement of results}
The arithmetic properties of Gauss and Jacobi sums have a very long history in number theory, with applications in Diophantine equations and the theory of $L$-functions. Number theorists have obtained generalizations of classical hypergeometric functions that are assembled with these sums, and these functions have significant applications in arithmetic geometry. Here we make use of these functions, as developed by Greene, McCarthy, and Ono \cite{greene, greene2,mccarthy2,mccarthy3,ono}.
Let $p$ be an odd prime, and let $\mathbb{F}_q$ be the finite field containing $q$ elements, where $q=p^r,r\geq1$. Let $\widehat{\mathbb{F}_q^{\times}}$ be the group of all the multiplicative
characters on $\mathbb{F}_q^{\times}$. We extend the domain of each $\chi\in \widehat{\mathbb{F}_q^{\times}}$ to $\mathbb{F}_q$ by setting $\chi(0):=0$
including the trivial character $\varepsilon$. Let $\delta$ denote the function on $\widehat{\mathbb{F}_q^{\times}}$ defined by
\begin{align}\label{delta}
\delta(A):=\left\{
	\begin{array}{ll}
		1, & \hbox{if $A$ is the trivial character;} \\
		0, & \hbox{otherwise.}
	\end{array}
	\right.
\end{align}
 Let $D_\lambda^d$ be the family of monomial deformation of diagonal hypersurfaces over $\mathbb{F}_q$ defined by the homogeneous equation
 \begin{align*}
D_\lambda^d: X_1^d+X_2^d+\cdots+X_n^d=\lambda d X_1^{h_1}X_2^{h_2}\cdots X_n^{h_n},
\end{align*} 
where $d,n\geq2$, $h_i\geq1$, $\sum_{i=1}^n h_i=d$, and $\gcd(d,h_1,h_2,\ldots, h_n)=1$. 
\par In this article, we study the family of diagonal hypersurfaces $D_\lambda^d$ with $n=2$. For $\lambda\in\mathbb{F}_q$, let $D_\lambda^{d,k}$ denote the family of diagonal hypersurfaces defined as
\begin{align}\label{eqn-0.1}
	D_\lambda^{d,k}:X_1^d+X_2^d=d\lambda X_1^k X_2^{d-k},
\end{align}
where $d\geq 2$, $k\geq 1$, $d>k$, and $\gcd(d,k,d-k)=1$. Note that $\gcd(d,k,d-k)=1$ if and only if $\gcd(d,k)=1$.
\subsection{Number of $\mathbb{F}_q$-points on Diagonal hypersurfaces}
Some of the biggest motivations for studying Gaussian and $p$-adic hypergeometric functions have been their connections with counting points on certain kinds of algebraic varieties and with Fourier coefficients and eigenvalues of modular forms. Let $\#D^{d,k}_\lambda$ denote the number of points on \eqref{eqn-0.1} in $\mathbb{P}^{1}(\mathbb{F}_q)$. In \cite{SB}, we expressed $\#D^{d,k}_\lambda$ in terms of special values of the $p$-adic hypergeometic function. Let ${_{n}}G_{n}\left[\cdots\right]$ be the $p$-adic hypergeometic function as introduced in Section 2. Then, the relation between $\#D^{d,k}_\lambda$ and the $p$-adic hypergeometic function is given by the following result:
\begin{theorem}\emph{(\cite[Theorem 1.3]{SB}).}\label{Theorem-1}
	Let $p$ be an odd prime and $q=p^r, \ r\geq 1$. Let $d\geq2$ and $k\geq 1$ be integers such that $d>k$ and $\gcd(d,k)=1$, and let $D_\lambda^{d,k}$ be the diagonal hypersurface given in \eqref{eqn-0.1} such that $p\nmid dk(d-k)$. 
	Then, for $\lambda \neq 0$, the number of points on $D_\lambda^{d,k}$ in $\mathbb{P}^{1}(\mathbb{F}_q)$ is given by
	\begin{align*}
		\#D_\lambda^{d,k} = 1 + {_{d-1}}G_{d-1}\left[\begin{array}{ccccccc}
			\frac{1}{d}, &\hspace{-.2cm} \frac{2}{d}, &\hspace{-.2cm} \ldots, &\hspace{-.2cm} \frac{k}{d}, &\hspace{-.2cm} \frac{k+1}{d}, &\hspace{-.2cm} \ldots, &\hspace{-.2cm} \frac{d-1}{d}\vspace{.1cm} \\
			0, &\hspace{-.2cm} \frac{1}{k}, &\hspace{-.2cm}  \ldots, &\hspace{-.2cm} \frac{k-1}{k}, &\hspace{-.2cm} \frac{1}{d-k}, &\hspace{-.2cm} \ldots, &\hspace{-.2cm} \frac{d-k-1}{d-k}
		\end{array}|\lambda^d k^{k} (d-k)^{d-k}
		\right]_q.
	\end{align*}
\end{theorem}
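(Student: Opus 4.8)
The plan is to reduce the projective point count to a one-variable root count and then evaluate that count by Gauss-sum techniques, converting the answer to the $p$-adic gamma function through the Gross--Koblitz formula. First I would dehomogenize: no projective point with $X_2=0$ can lie on $D_\lambda^{d,k}$, since the defining equation would then force $X_1=0$ as well. Hence setting $y=X_1/X_2$ gives a bijection between the points of $D_\lambda^{d,k}$ in $\mathbb{P}^1(\mathbb{F}_q)$ and the zeros $y\in\mathbb{F}_q$ of $f(y):=y^d-d\lambda y^k+1$, and it suffices to show that the number $N$ of zeros of $f$ equals $1+{_{d-1}}G_{d-1}[\,\cdots\,]_q$.

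Next I would turn $N$ into a character sum. Fixing a nontrivial additive character $\psi$ of $\mathbb{F}_q$ and using additive orthogonality,
\begin{align*}
N=\frac{1}{q}\sum_{t\in\mathbb{F}_q}\sum_{y\in\mathbb{F}_q}\psi\bigl(t(y^d-d\lambda y^k+1)\bigr),
\end{align*}
where the term $t=0$ contributes the isolated constant $1$. For $t\neq 0$ I would expand each factor $\psi(ty^d)$ and $\psi(-td\lambda y^k)$ via the Gauss-sum inversion $\psi(a)=\frac{1}{q-1}\sum_{\chi}g(\overline{\chi})\chi(a)$, valid for $a\neq 0$, where $g(\chi)=\sum_{x}\chi(x)\psi(x)$. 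Summing over $y\in\mathbb{F}_q^{\times}$ forces the constraint $\chi_1^{d}\chi_2^{k}=\varepsilon$ on the two characters, and summing over $t$ produces a third Gauss sum. Because $\gcd(d,k)=1$, the constrained characters can be written as powers of a single character (the Teichm\"uller character $\omega$), so the multiple sum collapses to a single sum over one index $j$ modulo $q-1$, the summand being a ratio of Gauss sums times $\overline{\omega}^{j}$ evaluated at an explicit constant.

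I would then apply the Hasse--Davenport product (multiplication) relation to the Gauss sums attached to the $d$th, $k$th, and $(d-k)$th powers; this is exactly the mechanism that manufactures the product over $i$ of gamma-quotients carrying the parameters $\frac{i}{d}$ on top and $\frac{i}{k},\frac{i}{d-k}$ on the bottom. Passing from Gauss sums to $\Gamma_p$ by Gross--Koblitz and collecting the powers of $-p$ and the normalizing constants, the accumulated constant at which $\overline{\omega}^{j}$ is evaluated should become $\lambda^{d}k^{k}(d-k)^{d-k}$, after which the expression matches McCarthy's definition of ${_{d-1}}G_{d-1}$ term by term. I expect the main obstacle to be precisely this final matching: tracking the fractional-part and floor bookkeeping together with the $-p$ powers through Gross--Koblitz so that the arguments land on exactly $\langle\tfrac{i}{d}\rangle$, $\langle\tfrac{i}{k}\rangle$, $\langle\tfrac{i}{d-k}\rangle$, confirming the argument $\lambda^{d}k^{k}(d-k)^{d-k}$, and separately reconciling the degenerate terms (trivial characters, and the $t=0$ and $y=0$ contributions) so that they combine to the additive constant $1$. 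The hypothesis $p\nmid dk(d-k)$ is what keeps these multiplication relations and gamma evaluations nondegenerate, and the $p$-adic route via Gross--Koblitz is what yields the stated formula uniformly in $q$, without assuming that $d,k,d-k$ divide $q-1$.
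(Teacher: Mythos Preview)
Your proposal is correct and follows essentially the same route as the paper (and the earlier paper \cite{SB} it cites for Theorem~\ref{Theorem-1}): dehomogenize to a one-variable root count, expand via additive characters into a single-index Gauss-sum expression (this is exactly the sum $A_1$ quoted in the proof of Theorem~\ref{MT-2}), apply Gross--Koblitz, and use the gamma-multiplication identities (Lemmas~\ref{lemma-3_1}--\ref{lemma-3_4}, which encode the Hasse--Davenport product relation) to match McCarthy's ${_{d-1}}G_{d-1}$. The only cosmetic difference is that the paper counts affine points $N_q^d(\lambda)$ on the homogeneous equation rather than zeros of $g_\lambda(y)=y^d-d\lambda y^k+1$ directly, but these are equivalent via \eqref{neweqn-02} and \eqref{neweqn-11}.
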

Let $N_q^d(\lambda)$ denote the number of points on the diagonal hypersurface $D_\lambda^{d,k}$ in $\mathbb{A}^{2}(\mathbb{F}_q)$. Then we have
	\begin{align}\label{neweqn-02}
	\# D_\lambda^{d,k}=\frac{N_q^d(\lambda) -1}{q-1}. 
	\end{align}
	Let $f(x_1,x_2):=x_1^d+x_2^d-d\lambda x_1^{k} x_2^{d-k}$. One can dehomogenize $f(x_1,x_2)$ to obtain $g_\lambda(y):=y^d-d\lambda y^k+1$. Also, if $\theta$ is a root of $g_\lambda(y)$, then $(\alpha\theta,\alpha)$ lies on $f(x_1,x_2)=0$ for all $\alpha\in\mathbb{F}_q^\times$. Suppose $g_\lambda(y)$ has $r_q'(\lambda)$ distinct zeros in $\mathbb{F}_q$. Then, $N_q^d(\lambda)=(q-1) r_q'(\lambda)+1$ and hence, 
	\begin{align}\label{neweqn-11}
	\#D^{d,k}_\lambda=r_q'(\lambda).
	\end{align}
	In this article, we relate $\#D^{d,k}_\lambda$ to the number of zeros of another polynomial over $\mathbb{F}_q$. In the following theorem, we prove that $\#D^{d,k}_\lambda$ is equal to the number of distinct zeros of the polynomial $f_\lambda(y)=y^{d-k}(1-y)^k-(d\lambda)^{-d}$. 
	\begin{theorem}\label{MT-2}
		Let $d>k\geq 1$ be integers such that $\gcd(k,d)=1$. Let $p$ be an odd prime such that $p\nmid dk(d-k)$. Let $q=p^r$, $r\geq1$. For $\lambda\in \mathbb{F}^{\times}_q$, let $f_\lambda(y)=y^{d-k}(1-y)^k-(d\lambda)^{-d}\in \mathbb{F}_q[y]$, and let $r_q(\lambda)$ be the number of distinct zeros of $f_\lambda(y)$ in $\mathbb{F}_q$. Let $\#D_\lambda^{d,k}$ be the number of points on $D_\lambda^{d,k}$ in $\mathbb{P}^1(\mathbb{F}_q)$. Then, we have $r_q(\lambda)=\#D_\lambda^{d,k}$.
	\end{theorem}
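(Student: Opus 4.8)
The plan is to reduce the theorem to a purely polynomial statement and then exhibit an explicit bijection between the two root sets. By \eqref{neweqn-11} we already know $\#D_\lambda^{d,k} = r_q'(\lambda)$, the number of distinct zeros in $\mathbb{F}_q$ of $g_\lambda(y) = y^d - d\lambda y^k + 1$. Hence it suffices to show that $g_\lambda$ and $f_\lambda$ have the same number of distinct zeros in $\mathbb{F}_q$, i.e.\ that $r_q(\lambda) = r_q'(\lambda)$.

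First I would introduce the map $\phi(\theta) = \frac{1}{\theta^d + 1}$ on the zero set of $g_\lambda$. This is well defined: every zero $\theta$ of $g_\lambda$ is nonzero (as $g_\lambda(0) = 1$), and the defining relation $\theta^d + 1 = d\lambda\theta^k$ together with $d\lambda \neq 0$ (here $p \nmid d$) forces $\theta^d + 1 \neq 0$. Raising $\theta^d + 1 = d\lambda\theta^k$ to the $d$-th power gives $(\theta^d + 1)^d = (d\lambda)^d\theta^{dk}$, and a direct computation then shows
\[ \phi(\theta)^{d-k}\bigl(1 - \phi(\theta)\bigr)^k = \frac{\theta^{dk}}{(\theta^d+1)^d} = (d\lambda)^{-d}, \]
so $\phi$ maps zeros of $g_\lambda$ to zeros of $f_\lambda$.

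Next I would prove $\phi$ is injective. If $\phi(\theta_1) = \phi(\theta_2)$ then $\theta_1^d = \theta_2^d$; subtracting the two relations $\theta_i^d + 1 = d\lambda\theta_i^k$ and using $d\lambda \neq 0$ gives $\theta_1^k = \theta_2^k$. Since $\gcd(d,k) = 1$, Bézout provides integers $a,b$ with $ad + bk = 1$, whence $\theta_1 = (\theta_1^d)^a(\theta_1^k)^b = (\theta_2^d)^a(\theta_2^k)^b = \theta_2$ (legitimate since the $\theta_i$ are nonzero). For surjectivity, given a zero $y$ of $f_\lambda$ (necessarily $y \neq 0,1$), set $w = \tfrac{1-y}{y}$, so that $y = \tfrac{1}{w+1}$ and the relation $y^{d-k}(1-y)^k = (d\lambda)^{-d}$ rewrites as $(w+1)^d = (d\lambda)^d w^k$. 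Using the same Bézout exponents I would define $\theta = w^a\bigl(\tfrac{w+1}{d\lambda}\bigr)^b$ and check, from $(w+1)^d = (d\lambda)^d w^k$, that $\theta^d = w$ and $\theta^k = \tfrac{w+1}{d\lambda}$; then $g_\lambda(\theta) = w - d\lambda\cdot\tfrac{w+1}{d\lambda} + 1 = 0$ and $\phi(\theta) = \tfrac{1}{w+1} = y$. Thus $\phi$ is a bijection between the two zero sets, giving $r_q'(\lambda) = r_q(\lambda)$ and hence $\#D_\lambda^{d,k} = r_q(\lambda)$.

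I expect the surjectivity step to be the main obstacle: the map $\phi$ factors through $\theta \mapsto \theta^d$, which is far from injective on $\mathbb{F}_q^\times$, so producing a genuine $d$-th root $\theta \in \mathbb{F}_q$ of $w$ that simultaneously satisfies $g_\lambda(\theta) = 0$ is the delicate point. The resolution is that the coprimality $\gcd(d,k) = 1$ pins down the candidate $\theta$ uniquely via the Bézout combination of $w$ and $\tfrac{w+1}{d\lambda}$, and the relation $(w+1)^d = (d\lambda)^d w^k$ is exactly what is needed to verify that this candidate has the required $d$-th and $k$-th powers.
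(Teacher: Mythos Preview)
Your proof is correct and takes a genuinely different route from the paper. The paper establishes $r_q(\lambda)=\#D_\lambda^{d,k}$ via character sums: it introduces the Gauss-sum expression
\[
C(d,k,\alpha)=\sum_{\chi\in\widehat{\mathbb{F}_q^\times}}\frac{g(\chi^d)g(\overline{\chi}^{d-k})\chi(\alpha)}{g(\chi^k)},
\]
computes it two ways (once as a Jacobi-sum expansion yielding the root count of $h_\alpha$, once in terms of the character sum $A_1$ that already computes $\#D_\lambda^{d,k}$), and matches the two answers after specialising $\alpha=(-1)^{d+k}(d\lambda)^{-d}$. You instead bypass all analytic machinery: starting from the elementary identification $\#D_\lambda^{d,k}=r_q'(\lambda)$ of \eqref{neweqn-11}, you exhibit an explicit bijection $\theta\mapsto 1/(\theta^d+1)$ between the zero sets of $g_\lambda$ and $f_\lambda$, with the inverse built from a B\'ezout combination exploiting $\gcd(d,k)=1$. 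This is precisely the direct link the authors remark they ``do not know'' after Theorem~\ref{MT-2}; your argument shows that while no polynomial change of variable turns $f_\lambda$ into $g_\lambda$, their $\mathbb{F}_q$-root sets are in canonical bijection. The payoff of your approach is a shorter, self-contained proof of Theorem~\ref{MT-2} (and hence of Corollary~\ref{cor-0}); the payoff of the paper's character-sum approach is that the intermediate expression $C(d,k,\alpha)$ is reused verbatim in the proof of Theorem~\ref{MT-1}, where $\gcd(d,k)=1$ is dropped and the B\'ezout trick is no longer available.
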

If $k=1$, then $g_\lambda(y)$ can be obtained from $f_\lambda(y)$ by a change of the variable.
However, for $k>1$, we do not know if $g_\lambda(y)$ can be obtained from $f_\lambda(y)$ in a similar way. Combining \eqref{neweqn-11} and Theorem \ref{MT-2}, we obtain the following corollary.
\begin{cor}\label{cor-0}
	Let $d>k\geq 1$ be integers. Let $p$ be an odd prime such that $p\nmid dk(d-k)$. Let $q=p^r$, $r\geq1$. For $\lambda\in \mathbb{F}_q^\times$, let $r_q(\lambda)$ and $r_q'(\lambda)$ be the number of distinct zeros of $f_{\lambda}(y)=y^{d-k}(1-y)^k-(d\lambda)^{-d}$ and $g_\lambda(y)=y^d-d\lambda y^k+1$ in $\mathbb{F}_q$, respectively. If $\gcd(d,k)=1$, then we have $r_q'(\lambda)=r_q(\lambda)$.
\end{cor}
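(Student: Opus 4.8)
The plan is to obtain $r_q'(\lambda)=r_q(\lambda)$ not by exhibiting a direct correspondence between the zero sets of $f_\lambda$ and $g_\lambda$, but by routing both counts through the common geometric invariant $\#D_\lambda^{d,k}$. First I would recall the dehomogenization already performed in the excerpt: from $f(x_1,x_2)=x_1^d+x_2^d-d\lambda x_1^k x_2^{d-k}$ one passes to $g_\lambda(y)=y^d-d\lambda y^k+1$, and each root $\theta$ of $g_\lambda$ yields the full line of affine solutions $(\alpha\theta,\alpha)$ with $\alpha\in\mathbb{F}_q^\times$. Counting these via \eqref{neweqn-02} gives $N_q^d(\lambda)=(q-1)r_q'(\lambda)+1$, hence the first equality I need, $\#D_\lambda^{d,k}=r_q'(\lambda)$, recorded as \eqref{neweqn-11}.

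The second equality is exactly Theorem \ref{MT-2}, which under the standing hypotheses $\gcd(k,d)=1$ and $p\nmid dk(d-k)$ asserts $r_q(\lambda)=\#D_\lambda^{d,k}$. Chaining the two then gives $r_q'(\lambda)=\#D_\lambda^{d,k}=r_q(\lambda)$, and the corollary is immediate. In particular no manipulation of the polynomials $f_\lambda$ and $g_\lambda$ themselves is required; all the substantive work has been absorbed into Theorem \ref{MT-2}, so in this sense the statement has no separate obstacle of its own.

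I expect the only genuinely nontrivial ingredient to be Theorem \ref{MT-2}, precisely because no direct substitution carrying $g_\lambda$ to $f_\lambda$ is known for $k>1$. To make clear why the geometric detour is forced, I would record the one case where a direct argument does succeed: for $k=1$ the map $\theta\mapsto w=(d\lambda\theta)^{-1}$ is a bijection of $\mathbb{F}_q^\times$, and substituting $\theta=(d\lambda w)^{-1}$ into $g_\lambda(\theta)=0$ and clearing the denominator $w^d$ turns it, up to the nonzero factor $-w^{-d}$, into $w^{d-1}(1-w)=(d\lambda)^{-d}$, i.e. $f_\lambda(w)=0$; since neither polynomial vanishes at $0$, this is an honest bijection of zero sets and yields $r_q'(\lambda)=r_q(\lambda)$ algebraically. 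For $k>1$ no analogue of this variable change is apparent, so the coincidence of the two counts is genuinely a theorem about points on $D_\lambda^{d,k}$ rather than an elementary identity between $f_\lambda$ and $g_\lambda$, which is exactly why the proof proceeds through $\#D_\lambda^{d,k}$.
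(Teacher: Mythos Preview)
Your proposal is correct and matches the paper's own derivation: the corollary is stated immediately after the sentence ``Combining \eqref{neweqn-11} and Theorem \ref{MT-2}, we obtain the following corollary,'' which is exactly the chain $r_q'(\lambda)=\#D_\lambda^{d,k}=r_q(\lambda)$ you wrote down. Your additional discussion of the $k=1$ substitution is correct and parallels the paper's remark that for $k=1$ one can pass between $f_\lambda$ and $g_\lambda$ by a change of variable, while for $k>1$ no such direct argument is known.
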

Theorem \ref{MT-2} relates $r_q(\lambda)$, the number of distinct zeros of the polynomial $f_{\lambda}(y)=y^{d-k}(1-y)^k-(d\lambda)^{-d}$ in $\mathbb{F}_q$, to $\#D_\lambda^{d,k}$ for integers $d>k\geq 1$ with $\gcd(k,d)=1$. In the following theorem, we express $r_q(\lambda)$ in terms of the $p$-adic hypergeometric functions without the condition $\gcd(k,d)=1$. To be specific, we prove the following theorem.
\begin{theorem}\label{MT-1}
Let $d>k\geq 1$ be integers. Let $p$ be an odd prime such that $p\nmid dk(d-k)$. Let $q=p^r$, $r\geq1$. For $\lambda\in\mathbb{F}_q^\times$, let $r_q(\lambda)$ denote the number of distinct zeros of $f_{\lambda}(y)=y^{d-k}(1-y)^k-(d\lambda)^{-d}\in \mathbb{F}_q[y]$ in $\mathbb{F}_q$. We have 
		\begin{align*}
		r_q(\lambda)
		&=1+ {_{d-1}}G_{d-1}\left[\begin{array}{ccccccc}
		\frac{1}{d}, \hspace{-.1cm}&\frac{2}{d}, \hspace{-.1cm}&  \ldots, \hspace{-.1cm} & \frac{k}{d}, \hspace{-.1cm}& \frac{k+1}{d}, \hspace{-.1cm}& \ldots, \hspace{-.1cm}& \frac{d-1}{d}\vspace{0.1cm} \\
		0, \hspace{-.1cm}& \frac{1}{k}, \hspace{-.1cm}& \ldots,  \hspace{-.1cm}& \frac{k-1}{k}, \hspace{-.1cm} & \frac{1}{d-k},  \hspace{-.1cm}& \ldots, \hspace{-.1cm}& \frac{d-k-1}{d-k} 
		\end{array}|k^k(d-k)^{d-k}\lambda^d
		\right]_q\\
		&-\frac{1-q}{q}\hspace{-.3cm}\sum_{\begin{subarray}{1} \ \chi\in\widehat{\mathbb{F}_q^{\times}}, \\ \chi^{k_1}= \varepsilon,\chi\neq\varepsilon \end{subarray}}\overline{\chi}^d(-\lambda d)\delta(\chi^d),
		\end{align*}
		where $k_1=\gcd(q-1,k)$.
	\end{theorem}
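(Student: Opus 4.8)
The plan is to evaluate $r_q(\lambda)$ directly as a multiplicative-character sum and then recognize the bulk of it as the function ${}_{d-1}G_{d-1}$, with the degenerate characters producing both the constant $1$ and the correction sum. Write $c:=(d\lambda)^{-d}$, which is nonzero; hence $y=0$ and $y=1$ are never zeros of $f_\lambda$, and every zero lies in $\mathbb{F}_q\setminus\{0,1\}$ and satisfies $y^{d-k}(1-y)^k=c$. Applying the orthogonality relation $\frac{1}{q-1}\sum_{\chi}\chi(u)=1$ when $u=1$ (and $0$ otherwise) with $u=y^{d-k}(1-y)^k c^{-1}$, and summing over $y\neq 0,1$, I obtain
\begin{align*}
r_q(\lambda)=\frac{1}{q-1}\sum_{\chi\in\widehat{\mathbb{F}_q^\times}}\chi^d(d\lambda)\,J(\chi^{d-k},\chi^k),
\end{align*}
where $J(\chi^{d-k},\chi^k)=\sum_{y}\chi^{d-k}(y)\chi^k(1-y)$ is a Jacobi sum; the convention $\chi(0)=0$ discards the endpoints $y=0,1$ automatically, and $\bar\chi(c)=\chi^d(d\lambda)$.

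Next I would split the sum according to the triviality of the three characters $\chi^{d-k}$, $\chi^k$, and their product $\chi^{d-k}\chi^k=\chi^d$. For $\chi$ with all three nontrivial I use $J(A,B)=g(A)g(B)/g(AB)$ to rewrite each Jacobi sum as a quotient of Gauss sums, write $\chi=\bar\omega^{\,j}$ in terms of the Teichm\"uller character, and invoke the Gross--Koblitz formula to pass from Gauss sums to products of $p$-adic gamma values. Reorganizing these products over $j=0,\dots,q-2$ should reproduce the defining expression of ${}_{d-1}G_{d-1}$, with top parameters $\frac{i}{d}$ and bottom parameters drawn from $\frac{i}{k}$ and $\frac{i}{d-k}$, evaluated at $k^k(d-k)^{d-k}\lambda^d$; this is essentially the computation underpinning Theorem \ref{Theorem-1}, so I expect to quote the relevant Gauss-sum-to-${}_nG_n$ lemma from Section 2 (or from \cite{SB}) rather than reproduce it in full.

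The decisive step is the degenerate characters, and this is where dropping the hypothesis $\gcd(d,k)=1$ matters. I would evaluate the degenerate Jacobi sums by their special values---$J(\varepsilon,\varepsilon)=q-2$, $J(A,\varepsilon)=J(\varepsilon,A)=-1$ for $A\neq\varepsilon$, and $J(A,\bar A)=-A(-1)$---and compare each against the value implicitly assigned to the same index $j$ by the $p$-adic gamma definition of ${}_{d-1}G_{d-1}$. The contribution of the trivial character, reconciled with the $j=0$ term of the hypergeometric function, should account for the constant $1$. The remaining discrepancies come from the fully degenerate characters, namely those with $\chi^k=\varepsilon$ and $\chi^d=\varepsilon$ simultaneously---equivalently $\chi^{k_1}=\varepsilon$ with $\delta(\chi^d)=1$, where $k_1=\gcd(q-1,k)$---and these are exactly the characters that are ``extra'' when $\gcd(d,k)>1$. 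Summing their contributions, and noting that $\bar\chi^d(-\lambda d)=1$ on the support $\delta(\chi^d)=1$, assembles the correction term $-\frac{1-q}{q}\sum_{\chi^{k_1}=\varepsilon,\,\chi\neq\varepsilon}\bar\chi^d(-\lambda d)\,\delta(\chi^d)$.

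The main obstacle I anticipate is the bookkeeping across the degenerate strata: I must verify that the partially degenerate characters (where exactly one of $\chi^{d-k},\chi^k$ is trivial, or where $\chi^d=\varepsilon$ while $\chi^{d-k},\chi^k\neq\varepsilon$) contribute exactly what the ${}_{d-1}G_{d-1}$ definition prescribes, so that no spurious terms survive and only the fully degenerate characters feed the correction. Correctly tracking the normalizing powers of $-p$ produced by Gross--Koblitz, together with the $A(-1)$ sign factors in the degenerate Jacobi sums, is the delicate part, but it should reduce to a finite and routine verification once the strata are organized by the orders of $\chi^{d-k}$, $\chi^k$, and $\chi^d$.
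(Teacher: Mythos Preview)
Your starting point is correct and coincides with what the paper establishes in equations \eqref{eqn-1.50}--\eqref{eqn-1.5}: the identity $(q-1)\,r_q(\lambda)=\sum_{\chi}J(\chi^{d-k},\chi^k)\,\chi^d(d\lambda)$ is exactly the combination of \eqref{neweqn-1.5} with the Jacobi-sum expansion of $C(d,k,\alpha)$. Where your plan diverges is in the passage to ${}_{d-1}G_{d-1}$. You propose to stratify by the triviality pattern of $\chi^{d-k},\chi^k,\chi^d$, convert only the generic stratum to Gauss sums, and then reconcile each degenerate stratum against the corresponding $p$-adic-gamma summand in the $G$-function by hand. The paper instead avoids stratification entirely: it multiplies and divides by $g(\overline{\chi}^k)$ and uses $g(\chi^k)g(\overline{\chi}^k)=q\,\chi^k(-1)-(q-1)\delta(\chi^k)$ to rewrite the whole sum as the uniform triple product
\[
A=\frac{1}{q}\sum_{\chi}g(\chi^d)\,g(\overline{\chi}^{d-k})\,g(\overline{\chi}^k)\,\chi\bigl((-1)^k\alpha\bigr)
\]
plus a single correction supported on $\chi^{k_1}=\varepsilon$ (equation \eqref{eqn-1.0}). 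Because $A$ runs over all $\chi$ with no exceptional cases, Gross--Koblitz together with Lemmas~\ref{lemma-3_1}--\ref{lemma-3_4} converts it directly into the ${}_{d-1}G_{d-1}$ expression, and Lemma~\ref{lemma2_1} applied to $g(\chi^d)g(\overline{\chi}^d)$ in the correction produces the $\delta(\chi^d)$ sum in one stroke. Your route would reach the same destination, but you would have to compute both the Jacobi value and the $G$-function summand at every degenerate index and check they match (or record the defect), which is precisely the bookkeeping you flag as delicate; the paper's uniformization trick eliminates it. Note also that there is no ready-made ``Gauss-sum-to-${}_nG_n$'' lemma to cite: the paper performs that conversion explicitly here via Lemmas~\ref{lemma-3_1}--\ref{lemma-3_4}, so you would need to carry out the same computation.
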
 
\begin{remark}
We remark that Theorem \ref{Theorem-1} follows from Theorem \ref{MT-2} and Theorem \ref{MT-1}. Suppose that $\gcd(k,d)=1$. Then, we have $\gcd(d,k_1)=1$. Now, if $\chi^{k_1}=\varepsilon$ and $\chi^d=\varepsilon$ then the order of $\chi$ divides $\gcd(k_1,d)$. Since $\gcd(k_1,d)=1$, we have $\chi=\varepsilon$. Therefore, the last summation in Theorem \ref{MT-1} is empty, and hence
\begin{align*}
r_q(\lambda)=1+{_{d-1}}G_{d-1}\left[\begin{array}{ccccccc}
\frac{1}{d}, \hspace{-.1cm}&\frac{2}{d}, \hspace{-.1cm}&  \ldots,  \hspace{-.1cm}& \frac{k}{d}, \hspace{-.1cm}& \frac{k+1}{d}, \hspace{-.1cm}& \ldots, \hspace{-.1cm}& \frac{d-1}{d}\vspace{0.1cm} \\
0, \hspace{-.1cm}& \frac{1}{k}, \hspace{-.1cm}& \ldots,  \hspace{-.1cm}& \frac{k-1}{k},  \hspace{-.1cm}& \frac{1}{d-k},  \hspace{-.1cm}& \ldots, \hspace{-.1cm}& \frac{d-k-1}{d-k} 
\end{array}|k^k(d-k)^{d-k}\lambda^d
\right]_q.
\end{align*}
Now, by Theorem \ref{MT-2}, we readily obtain Theorem \ref{Theorem-1}.
\end{remark}
\subsection{Summation identities and applications to elliptic curves}
In view of significant presence of Gaussian and $p$-adic hypergeometric functions in arithmetic geometry, it is an interesting problem to find transformation formulas and special values of these hypergeometric functions. In recent times, several transformation formulas and special values of the $p$-adic hypergeometric functions were found, see for example \cite{BS2,BS1,BS3,BS4,BSM,NS,NS1,SB1}. 
In this article, we prove summation identities for the $p$-adic hypergeometric functions appearing in the expressions for the number of $\mathbb{F}_q$-points on the diagonal hypersurface $D_\lambda^{d,k}$. As an application of the summation identities, we prove identities for the trace of Frobenius endomorphism on certain families of elliptic curves and $p$-adic hypergeometric functions.
\par Firstly, we state the summation identities. Let $\varphi$ denote the quadratic character on $\mathbb{F}_q$.
\begin{theorem}\label{MT-3}
	Let $d>k\geq 1$ be odd integers. Let $p$ be an odd prime such that $p\nmid dk(d-k)$. Let $q=p^r, r\geq1$. Then, for $x\in\mathbb{F}_q^{\times}$, we have 
	\begin{align*}
		&1+q\cdot{_{d-1}}G_{d-1}\left[\begin{array}{ccccccc}
			\frac{1}{d}, & \frac{2}{d}, & \ldots, & \frac{k}{d}, & \frac{k+1}{d}, & \ldots , & \frac{d-1}{d}\vspace{0.1cm} \\
			0,& \frac{1}{k}, & \ldots, & \frac{k-1}{k}, & \frac{1}{d-k},  & \ldots , & \frac{d-k-1}{d-k} 
		\end{array}|x
		\right]_q	\\
		&=-\sum_{t\in\mathbb{F}_q}\varphi(t(t-1))\times \nonumber\\
		&{_{d-1}}G_{d-1}\left[\begin{array}{cccccccccc}
			\frac{1}{d}, \hspace{-.2cm} & \ldots, \hspace{-.2cm}& \frac{k}{d}, \hspace{-.2cm}& \frac{k+1}{d}, \hspace{-.2cm}& \ldots, \hspace{-.2cm}& \frac{k+\frac{d-k}{2}-1}{d}, \hspace{-.2cm}& \frac{k+\frac{d-k}{2}}{d}, \hspace{-.2cm}& \ldots, \hspace{-.2cm}& \frac{d-2}{d}, \hspace{-.2cm}& \frac{d-1}{d} \vspace{0.1cm}\\
			0, \hspace{-.2cm}& \ldots, \hspace{-.2cm} & \frac{k-1}{k}, \hspace{-.2cm} & \frac{1}{d-k}, \hspace{-.2cm}& \ldots, \hspace{-.2cm}& \frac{\frac{d-k}{2}-1}{d-k}, \hspace{-.2cm}& \frac{\frac{d-k}{2}+1}{d-k}, \hspace{-.2cm}& \ldots, \hspace{-.2cm}& \frac{d-k-1}{d-k}, \hspace{-.2cm}& 0 
		\end{array}|xt
		\right]_q.
	\end{align*} 
\end{theorem}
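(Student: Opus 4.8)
The plan is to reduce both sides to finite character sums over $\widehat{\mathbb{F}_q^\times}$ and to match them term by term, the bridge being the Hasse--Davenport product (duplication) relation for the quadratic character $\varphi$. First I would use the Gauss-sum representation of McCarthy's function to write the inner factor on the right-hand side as
$${_{d-1}}G_{d-1}\left[\cdots | xt\right]_q = \sum_{\chi \in \widehat{\mathbb{F}_q^\times}} c(\chi)\,\chi(xt),$$
where each coefficient $c(\chi)$ is an explicit product of ratios of Gauss sums $g(\cdot)$ determined by the top parameters $\frac{i}{d}$ and the bottom parameters (the two runs $\frac{j}{k}$ and $\frac{j}{d-k}$ together with the two zeros), all relative to a fixed generator $\omega$ of $\widehat{\mathbb{F}_q^\times}$; the hypothesis $p\nmid dk(d-k)$ guarantees that none of the Gauss sums in the denominators degenerate. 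Since $\chi(xt)=\chi(x)\chi(t)$, interchanging the two summations rewrites the right-hand side as $-\sum_\chi c(\chi)\chi(x)\,S(\chi)$, where $S(\chi):=\sum_{t\in\mathbb{F}_q}\varphi(t(t-1))\chi(t)$.

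The heart of the argument is the evaluation of $S(\chi)$. Writing $\varphi(t-1)=\varphi(-1)\varphi(1-t)$ and recognising a Jacobi sum, I obtain $S(\chi)=\varphi(-1)J(\varphi\chi,\varphi)=\varphi(-1)\,g(\varphi\chi)g(\varphi)/g(\chi)$ whenever $\chi\neq\varepsilon,\varphi$. Applying the duplication relation $g(\chi)g(\varphi\chi)=\chi(2)^{-2}g(\varphi)g(\chi^2)$ together with $g(\varphi)^2=\varphi(-1)q$ collapses this to
$$S(\chi)=q\,\chi(4)^{-1}\,\frac{g(\chi^2)}{g(\chi)^2}.$$
Thus each summand acquires a global factor $q$ (accounting for the $q$ multiplying the hypergeometric term on the left) together with the factor $\chi(4)^{-1}g(\chi^2)/g(\chi)^2$. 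The remaining task is to verify that multiplying $c(\chi)$ by this factor reproduces exactly the coefficients of the left-hand hypergeometric: concretely, since $d,k$ are odd and $d-k$ is even, the bottom row of the left-hand function contains the entry $\frac{(d-k)/2}{d-k}=\frac{1}{2}$, whose associated character is $\varphi$, whereas the right-hand bottom row has that $\frac{1}{2}$ deleted and an extra $0$ adjoined. The ratio $g(\chi^2)/g(\chi)^2$ is precisely what converts the pair ``(extra trivial-character slot) minus ($\varphi$-slot)'' on the right into the plain parameter list on the left, with the constants $\chi(4)^{-1}$ and $g(\varphi)$ reconciled against the powers of $2$ that the duplication formula introduces into the coefficients and into the relation between the argument and its normalisation.

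Finally I would treat the degenerate characters, for which the Jacobi-sum evaluation breaks down. Computing $S(\varepsilon)=\sum_{t}\varphi(t^2-t)=-1$ and $S(\varphi)=\sum_{t}\varphi(t-1)=-\varphi(-1)$ directly (both elementary quadratic-character sums) supplies the isolated constant $+1$ on the left-hand side and reconciles the two normalisations. Assembling the non-degenerate contribution, which reproduces $q\cdot{_{d-1}}G_{d-1}[\cdots|x]_q$, with the degenerate contribution then yields the stated identity. The principal obstacle is the exact term-by-term matching in the middle step, that is, confirming that the Hasse--Davenport factor $\chi(4)^{-1}g(\chi^2)/g(\chi)^2$ implements precisely the parameter change (deleting $\frac{1}{2}$, adjoining a $0$) and that no spurious constants survive once the $\chi=\varepsilon,\varphi$ terms are added back; by comparison, the evaluation of $S(\chi)$ and the handling of the degenerate characters are routine.
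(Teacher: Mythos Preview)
Your outline is correct and rests on the same core idea as the paper: the inner sum $\sum_{t}\varphi(t(t-1))\chi(t)$ is the Jacobi sum $\varphi(-1)J(\varphi\chi,\varphi)=\varphi(-1)\,g(\varphi\chi)g(\varphi)/g(\chi)$, and this factor is exactly what bridges the two hypergeometric functions, whose bottom rows differ only by replacing the entry $\tfrac12$ with an extra~$0$.

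The paper, however, executes this in a different framework. Rather than passing to Gauss sums, it stays inside the $p$-adic $\Gamma_p$-definition of ${_n}G_n$ throughout. Starting from the left-hand side it isolates, in the summand for $1\le a\le q-2$, the factor
\[
\prod_{i=0}^{r-1}\frac{\Gamma_p\!\bigl(\langle p^i-\tfrac{ap^i}{q-1}\rangle\bigr)\,\Gamma_p\!\bigl(\langle \tfrac{p^i}{2}+\tfrac{ap^i}{q-1}\rangle\bigr)}{(-p)^{\lfloor\frac12+\frac{ap^i}{q-1}\rfloor}\,\Gamma_p\!\bigl(\langle\tfrac{p^i}{2}\rangle\bigr)}
\]
(coming from the slot $b=\tfrac12$ together with the reflection relation $\prod_i\Gamma_p(\langle(1-\tfrac{a}{q-1})p^i\rangle)\Gamma_p(\langle\tfrac{ap^i}{q-1}\rangle)=(-1)^r\overline{\omega}^a(-1)$), and proves via Gross--Koblitz that this equals $-g(\varphi\overline{\omega}^a)g(\omega^a)/g(\varphi)$, which is then recognised as $-\sum_{t}\varphi(t(t-1))\overline{\omega}^a(-t)$. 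The remaining $\Gamma_p$-factors are \emph{literally} those of the right-hand ${_{d-1}}G_{d-1}$, so the matching you flag as the ``principal obstacle'' is automatic: the only bookkeeping is the floor identity $\lfloor\tfrac{ap^i}{q-1}\rfloor+\lfloor\tfrac{-ap^i}{q-1}\rfloor=-1$ (producing the global factor $q$) and the evaluation $\sum_t\varphi(t(t-1))=-1$ (producing the~$+1$).

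Your Hasse--Davenport step is a detour. The pre-duplication form $S(\chi)=\varphi(-1)g(\varphi\chi)g(\varphi)/g(\chi)=q\,g(\varphi\chi)/(g(\chi)g(\varphi))$ already \emph{is} the conversion factor between a $\tfrac12$-slot and a $0$-slot; the $\chi(4)^{-1}$ you introduce merely cancels against the $\chi(4)$ hidden in $g(\chi^2)/g(\chi)^2$ once one unwinds the duplication, and there are no powers of $2$ in either argument $x$ or $xt$ to absorb. Also, $\chi=\varphi$ is not degenerate for your formula (indeed $q\,\varphi(4)^{-1}g(\varepsilon)/g(\varphi)^2=-\varphi(-1)=S(\varphi)$); only $\chi=\varepsilon$ needs separate treatment, which corresponds exactly to the paper's handling of the $a=0$ term.
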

\begin{theorem}\label{MT-4}
Let $d>k\geq 2$ be integers with $d$ even. Let $p$ be an odd prime such that $p\nmid dk(d-k)$. Let $q=p^r, r\geq1$. Then, for $x\in\mathbb{F}_q$, we have 
	\begin{align*}
		&\sum\limits_{t\in\mathbb{F}_q}\varphi(1-t){_{d-2}}G_{d-2}\left[\begin{array}{cccccc}
			\frac{1}{d}, & \ldots, & \frac{\frac{d}{2}-1}{d}, & \frac{\frac{d}{2}+1}{d},& \ldots,  & \frac{d-1}{d} \\
			c_1, & \ldots, & c_{\frac{d}{2}-1}, & c_{\frac{d}{2}}, & \ldots, & c_{d-2} 
		\end{array}|xt
		\right]_q\\
		&=-{_{d-1}}G_{d-1}\left[\begin{array}{cccccccc}
			\frac{1}{d}, & \frac{2}{d}, & \ldots, & \frac{k}{d}, & \frac{k+1}{d}, & \ldots , & \frac{d-1}{d} \vspace{0.1cm}\\
			0,& \frac{1}{k}, & \ldots, & \frac{k-1}{k}, & \frac{1}{d-k},  & \ldots , & \frac{d-k-1}{d-k}  
		\end{array}|x
		\right]_q,	
	\end{align*} 
	where  $\{c_1,\ldots,c_{d-2}\}=\{\frac{1}{k},\ldots,\frac{k-1}{k}, \frac{1}{d-k},\ldots, \frac{d-k-1}{d-k}\}$.
\end{theorem}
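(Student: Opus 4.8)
The plan is to prove Theorem~\ref{MT-4} by unfolding the ${}_{d-2}G_{d-2}$ on the left-hand side into its defining character sum, carrying out the summation over $t$ to produce a Jacobi sum, and then recognizing---via the standard Jacobi--Gauss and Hasse--Davenport relations---that this Jacobi sum is exactly the factor that promotes the ${}_{d-2}G_{d-2}$ coefficient to the ${}_{d-1}G_{d-1}$ coefficient obtained by inserting the two parameters that distinguish the two sides. Indeed, comparing the parameter lists shows that the right-hand ${}_{d-1}G_{d-1}$ differs from the left-hand ${}_{d-2}G_{d-2}$ precisely by the extra numerator parameter $\frac12$ and the extra denominator parameter $0$. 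The crucial structural input is that $d$ is even, so that $\frac{d/2}{d}=\frac12$ genuinely sits among the numerator parameters $\{\frac{j}{d}:1\le j\le d-1\}$ of the right-hand side, while it is exactly the entry deleted from the left-hand ${}_{d-2}G_{d-2}$.

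First I would write ${}_{d-2}G_{d-2}[\,\cdots\,|\,xt\,]_q$ in the Gauss-sum form coming from the definition in Section~2, obtaining an expression of the shape $\frac{-1}{q-1}\sum_{\chi}P(\chi)\,\overline{\chi}(xt)$, where $P(\chi)$ is the product of Gauss-sum ratios attached to the numerator parameters $\{\frac{j}{d}:j\ne d/2\}$ and the denominator parameters $\{\frac{i}{k}\}\cup\{\frac{i}{d-k}\}$, together with the requisite powers of $-p$ supplied by Gross--Koblitz. Substituting this into $\sum_{t\in\mathbb{F}_q}\varphi(1-t)(\cdots)$ and interchanging the two finite sums isolates the inner sum $\sum_{t}\varphi(1-t)\,\overline{\chi}(t)$, which for $\chi\ne\varepsilon$ equals the Jacobi sum $J(\overline{\chi},\varphi)=\frac{g(\overline{\chi})\,g(\varphi)}{g(\overline{\chi}\varphi)}$.

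Next I would verify that multiplying $P(\chi)$ by $\frac{g(\overline{\chi})\,g(\varphi)}{g(\overline{\chi}\varphi)}$ reproduces exactly the coefficient of the right-hand ${}_{d-1}G_{d-1}$. The factor $g(\overline{\chi})$ accounts for the new denominator parameter $0$, the factor $g(\overline{\chi}\varphi)^{-1}$ accounts for the new numerator parameter $\frac12$, and the residual $g(\varphi)$, together with the $\chi$-independent normalizations $\Gamma_p(\langle\tfrac12\rangle)$, $\Gamma_p(0)$ and the ambient powers of $-p$, should collapse---using $g(\varphi)^2=\varphi(-1)q$ and the Gross--Koblitz evaluation of $g(\varphi)$---to produce precisely the sign $-1$ and the normalization on the right. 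Here I expect to lean on the same Gauss-sum rewriting of ${}_{d-1}G_{d-1}$ and a duplication identity set up in Section~2.

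The main obstacle will be the bookkeeping of these constants and of the exceptional characters. The trivial character $\chi=\varepsilon$ must be extracted from the character sum and handled by hand, as must the character $\chi=\varphi$ (for which $\overline{\chi}\varphi=\varepsilon$ and the relation $J(\overline{\chi},\varphi)=g(\overline{\chi})g(\varphi)/g(\overline{\chi}\varphi)$ degenerates); I would check that the contributions of these exceptional terms cancel, so that no additive constant survives on the right-hand side---consistent with Theorem~\ref{MT-4} carrying no ``$1+$'' term, in contrast with Theorem~\ref{MT-3}. Tracking the powers of $-p$ from Gross--Koblitz uniformly across the full $j$-sum, and confirming that the parity hypothesis $d$ even is what makes $\frac12$ land among the $\frac{j}{d}$, are the remaining delicate points.
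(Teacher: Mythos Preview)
Your plan is essentially the paper's argument run in the opposite direction. The paper starts from the right-hand ${}_{d-1}G_{d-1}$, separates out the factor corresponding to the numerator parameter $\tfrac{d/2}{d}=\tfrac12$ (this is where the hypothesis $d$ even is used), and then applies Lemma~\ref{lemma4.2}, which says precisely that the combined $p$-adic gamma factor attached to the pair $(\tfrac12,0)$ equals a weighted sum $-\sum_{t}\varphi(t(t-1))\omega^{a}(-t)$. After a substitution $t\mapsto t^{-1}$ (turning $\varphi(t(t-1))$ into $\varphi(1-t)$ and $x/t$ into $xt$), the left-hand side drops out. Your route---do the $t$-sum first to produce $J(\overline{\chi},\varphi)$ and then recognise it as the missing factor---is the same computation read backwards: the proof of Lemma~\ref{lemma4.2} (via Lemma~\ref{lemma4.1}) is exactly the Gross--Koblitz conversion of $\frac{g(\varphi\overline{\omega}^{a})g(\omega^{a})}{g(\varphi)}$ into the Jacobi sum $J(\varphi\overline{\omega}^{a},\varphi)$ that you describe.

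Two small remarks. First, no Hasse--Davenport relation is needed here; the only ingredients are Gross--Koblitz and the elementary Jacobi--Gauss identity of Lemma~\ref{lemma2_2}. Second, your worry about the exceptional characters $\chi=\varepsilon$ and $\chi=\varphi$ is legitimate in your direction (since $J(\overline{\chi},\varphi)=g(\overline{\chi})g(\varphi)/g(\overline{\chi}\varphi)$ fails for $\chi=\varphi$), but the paper's organisation sidesteps it: because in Lemma~\ref{lemma4.1} the product of the two characters in the Jacobi sum is always $\varphi\neq\varepsilon$, the Gauss-sum expression holds uniformly for all $0\le a\le q-2$, so no separate bookkeeping is required and no additive constant appears. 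If you carry out your version, this is the one place where the paper's ordering is genuinely tidier.
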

For example, if we take $d=5, k=3$ and $d=6, k=3$ in Theorems \ref{MT-3} and \ref{MT-4}, respectively, then for $p>5$ and $x\in\mathbb{F}_q^\times$, we obtain the following identities.
\begin{align*}
	-\sum_{t\in\mathbb{F}_q}\varphi(t(t-1))
	{_4}G_{4}\left[\begin{array}{cccc}
		\frac{1}{5}, &  \frac{2}{5}, & \frac{3}{5}, & \frac{4}{5}\vspace*{.05cm} \\
		0, & 0, & \frac{1}{3}, &  \frac{2}{3}
	\end{array}|xt
	\right]_q &=1+q\cdot{_4}G_{4}\left[\begin{array}{cccc}
		\frac{1}{5}, & \frac{2}{5}, & \frac{3}{5}, & \frac{4}{5}\vspace*{.05cm}\\
		\frac{1}{3}, & \frac{2}{3}, & 0,  & \frac{1}{2}
	\end{array}|x
	\right]_q,	\\
	\sum\limits_{t\in\mathbb{F}_q}\varphi(1-t){_{4}}G_{4}\left[\begin{array}{cccc}
		\frac{1}{6}, & \frac{2}{6}, & \frac{4}{6}, & \frac{5}{6} \vspace*{.05cm}\\
		\frac{1}{3}, & \frac{2}{3}, &  \frac{1}{3}, & \frac{2}{3}
	\end{array}|xt
	\right]_q
	&=-{_{5}}G_{5}\left[\begin{array}{ccccc}
		\frac{1}{6},& \frac{2}{6}, & \frac{3}{6},& \frac{4}{6}, & \frac{5}{6}\vspace*{.05cm}  \\
		\frac{1}{3}, &  \frac{2}{3}, & 0, & \frac{1}{3}, &  \frac{2}{3}
	\end{array}|x
	\right]_q.	
\end{align*}
For $r=1$, taking $k=1$ in Theorem \ref{MT-3} and $k=d-1$  in Theorem \ref{MT-4}, we obtain \cite[Theorem $1.2$]{BS5} and \cite[Theorem $1.3$]{BS5}, respectively.
\par 
There are many significant relations of hypergeometric functions to elliptic curves over finite fields. For example, see \cite{BK, BK1, fuselier,koike,lennon,lennon2,ono}. In \cite{mccarthy2}, McCarthy expressed the trace of the Frobenius endomorphism on elliptic curves in terms of special values of the $p$-adic hypergeometric function. Later, the second author with Saikia \cite{BS1}, gave another expression for the trace of the Frobenius endomorphism on elliptic curves and $p$-adic hypergeometric functions. Let $E$ be an elliptic curve given in the Weierstrass form over the finite field $\mathbb{F}_q$. Then the trace of Frobenius $a_q(E)$ of $E$ is given by
\begin{align*}
	a_q(E):=q+1-\#E(\mathbb{F}_q),
\end{align*}
where $\#E(\mathbb{F}_q)$ denotes the number of $\mathbb{F}_q$-points on $E$ including the point at infinity. Let $j(E)$ denote the $j$-invariant of the elliptic curve $E$. We now state two identities for $a_q(E)$ and $p$-adic hypergeometric functions.
\begin{theorem}\label{MT-5}
	Let $p>3$ be a prime and $q=p^r,r\geq1$ such that $q\not\equiv1\pmod3$. For $b,t\in\mathbb{F}_q^\times$, consider the elliptic curve $E_{t,b}:y^2=x^3+tx+b$ over $\mathbb{F}_q$. Then we have
	\begin{align*}
		\sum_{t\in\mathbb{F}_q^\times}\varphi(t(t^3-1))a_q(E_{t,b})=-q\cdot\varphi(b)\cdot{_3}G_{3}\left[\begin{array}{ccc}
			\frac{1}{4},  & \frac{1}{2},& \frac{3}{4} \vspace*{0.1cm}\\
			0, &	\frac{1}{3}, & \frac{2}{3}
		\end{array}|\frac{-27b^2}{4}
		\right]_q.
	\end{align*}
\end{theorem}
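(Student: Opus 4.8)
The plan is to express each trace $a_q(E_{t,b})$ through the $p$-adic hypergeometric function and then recognize the resulting weighted sum as an instance of the summation identity in Theorem \ref{MT-4}. The first ingredient is the known formula for the trace of Frobenius from \cite{BS1}: for $p>3$ and $t,b\in\mathbb{F}_q^\times$ the curve $E_{t,b}\colon y^2=x^3+tx+b$ satisfies
\begin{align*}
a_q(E_{t,b})=q\,\varphi(b)\cdot{}_2G_2\left[\begin{array}{cc}\frac{1}{4}, & \frac{3}{4} \\ \frac{1}{3}, & \frac{2}{3}\end{array}\,\Big|\,\frac{-27b^2}{4t^3}\right]_q.
\end{align*}
Substituting this into $\sum_{t\in\mathbb{F}_q^\times}\varphi(t(t^3-1))a_q(E_{t,b})$ reduces the whole theorem to evaluating a $\varphi$-weighted sum of these ${}_2G_2$-values. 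The crucial observation is that the ${}_2G_2$ occurring here, with upper parameters $\frac{1}{4},\frac{3}{4}$ and lower parameters $\frac{1}{3},\frac{2}{3}$, is exactly the ${}_{d-2}G_{d-2}$ on the left-hand side of Theorem \ref{MT-4} when $d=4$ and $k=3$, while the ${}_3G_3$ on the right-hand side of Theorem \ref{MT-5} is precisely the ${}_{d-1}G_{d-1}$ on the right-hand side of Theorem \ref{MT-4} for those same parameters. (These $d,k$ satisfy $d>k\geq2$ with $d$ even, and $p\nmid dk(d-k)=12$ since $p>3$.) This match is what singles out Theorem \ref{MT-4} as the right tool.

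The remaining task is to transform the argument $\frac{-27b^2}{4t^3}$ and the weight $\varphi(t(t^3-1))$ into the linear argument and the weight $\varphi(1-s)$ required by Theorem \ref{MT-4}. This is where the hypothesis $q\not\equiv1\pmod3$ enters: together with $p>3$ it gives $\gcd(3,q-1)=1$, so $t\mapsto t^3$ is a bijection of $\mathbb{F}_q^\times$, and hence $s:=t^{-3}$ runs once over $\mathbb{F}_q^\times$ as $t$ does. Under this substitution $\frac{-27b^2}{4t^3}=\frac{-27b^2}{4}\,s$. For the weight, write $1-s=(t^3-1)/t^3$; since $\varphi$ is quadratic we have $\varphi(t^3)=\varphi(t)$, and combining this with $\varphi(t(t^3-1))=\varphi(t)\varphi(t^3-1)$ gives $\varphi(1-s)=\varphi(t^3-1)\varphi(t^3)=\varphi(t^3-1)\varphi(t)=\varphi(t(t^3-1))$. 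Therefore
\begin{align*}
\sum_{t\in\mathbb{F}_q^\times}\varphi(t(t^3-1))\,a_q(E_{t,b})=q\,\varphi(b)\sum_{s\in\mathbb{F}_q^\times}\varphi(1-s)\,{}_2G_2\left[\begin{array}{cc}\frac{1}{4}, & \frac{3}{4} \\ \frac{1}{3}, & \frac{2}{3}\end{array}\,\Big|\,\frac{-27b^2}{4}\,s\right]_q.
\end{align*}

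Now I would apply Theorem \ref{MT-4} with $d=4$, $k=3$ and $x=\frac{-27b^2}{4}$, whose left-hand side is $\sum_{s\in\mathbb{F}_q}\varphi(1-s)\,{}_2G_2[\cdots|\frac{-27b^2}{4}s]_q$ and whose right-hand side is the negative of the ${}_3G_3$ appearing in Theorem \ref{MT-5}. The only difference from the sum just obtained is the range of summation: Theorem \ref{MT-4} sums over all of $\mathbb{F}_q$, whereas our sum omits $s=0$. The missing term is $\varphi(1)\,{}_2G_2[\cdots|0]_q$, and it vanishes because ${}_nG_n$ evaluated at argument $0$ is $0$ — each summand in the defining sum of ${}_nG_n$ carries a character value at $0$, which is $0$ by the convention $\chi(0)=0$ fixed in the introduction. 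Hence the two sums agree, and multiplying the identity of Theorem \ref{MT-4} by $q\,\varphi(b)$ delivers Theorem \ref{MT-5}. I expect the genuine difficulty to lie not in this bookkeeping but in the first step: fixing the precise constant and sign in the ${}_2G_2$-formula for $a_q(E_{t,b})$ as recorded in \cite{BS1}, and justifying its use at the unique $t_0\in\mathbb{F}_q^\times$ with $4t_0^3+27b^2=0$, where $E_{t_0,b}$ is singular and the argument of ${}_2G_2$ degenerates to $1$; this fibre must be handled either by adopting the point-count definition $a_q(E_{t,b})=-\sum_{x\in\mathbb{F}_q}\varphi(x^3+tx+b)$ throughout or by checking the value of ${}_2G_2$ at $1$ separately.
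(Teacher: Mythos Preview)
Your proof is essentially the same as the paper's: both specialize Theorem~\ref{MT-4} to $d=4$, $k=3$, use the bijection $t\mapsto t^{-3}$ on $\mathbb{F}_q^\times$ (available since $q\not\equiv 1\pmod 3$) to convert the weight $\varphi(1-t)$ into $\varphi(t(t^3-1))$, and then invoke McCarthy's trace formula (stated in the paper as Theorem~\ref{theorem-4}, with the extension to $\mathbb{F}_q$ verified in \cite{BS1}). The paper likewise drops the $t=0$ term as you do, and it passes over the singular-fiber issue you flag by simply asserting $j(E_{t,b})\neq 0,1728$ for $t,b\in\mathbb{F}_q^\times$.
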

\begin{theorem}\label{MT-7}
	Let $p\geq3$ be a prime and $q=p^r,r\geq1$. For $f,t\in\mathbb{F}_q^\times$, consider the elliptic curve $E_{t,f}: y^2=x^3+fx^2+\frac{x}{t}$ over $\mathbb{F}_q$. Then we have 
	\begin{align*}
		&\sum_{t\in\mathbb{F}_q^\times}\varphi(t(t-1))a_q(E_{t,f})\\
		&=\left\{
		\begin{array}{lll}
			-\varphi(f)-q\cdot\varphi(2f), &\hbox{if $f^2=4$;}\\
			-\varphi(f), & \hbox{if $f^2-4$ is not a square;} \\
			-\varphi(f)-q\cdot\varphi(2f)(\varphi(1+\frac{a}{f})+\varphi(1-\frac{a}{f})), & \hbox{ if $f^2-4=a^2$.}
		\end{array}
		\right.
	\end{align*}
\end{theorem}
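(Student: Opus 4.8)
The plan is to reduce the left-hand side to a double sum of quadratic character values and to evaluate it by a case analysis on the roots of $x^2+fx+1$. First I would use the standard point count for a curve $y^2=g(x)$, namely $a_q(E_{t,f})=-\sum_{x\in\mathbb{F}_q}\varphi(x^3+fx^2+\frac{x}{t})$, valid at every nonsingular fibre (and taken as the defining expression at the finitely many singular $t$). Substituting this and interchanging the order of summation, the $x=0$ term drops out, so writing $x^3+fx^2+\frac{x}{t}=x(x^2+fx+\frac1t)$ and pulling out $\varphi(x)$ gives
\begin{align*}
\sum_{t\in\mathbb{F}_q^\times}\varphi(t(t-1))a_q(E_{t,f})=-\sum_{x\in\mathbb{F}_q^\times}\varphi(x)\sum_{t\in\mathbb{F}_q^\times}\varphi(t-1)\,\varphi\bigl(t\,x(x+f)+1\bigr),
\end{align*}
where I have used $\varphi(t)\varphi(x^2+fx+\frac1t)=\varphi\bigl(t(x^2+fx)+1\bigr)$ to absorb the factor $\varphi(t)$ coming from $\varphi(t(t-1))$.

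Next I would evaluate the inner sum over $t$. Setting $u=x(x+f)$ and adding back the $t=0$ term (which equals $\varphi(-1)$), the inner sum over $\mathbb{F}_q^\times$ differs by $\varphi(-1)$ from $\widetilde T(u):=\sum_{t\in\mathbb{F}_q}\varphi\bigl((t-1)(tu+1)\bigr)$; since $\sum_{x\in\mathbb{F}_q^\times}\varphi(x)=0$ the correction disappears and the whole expression becomes $-\sum_{x\in\mathbb{F}_q^\times}\varphi(x)\widetilde T(u)$. Now $(t-1)(tu+1)=ut^2+(1-u)t-1$ has discriminant $(1-u)^2+4u=(1+u)^2$, so the standard evaluation of $\sum_{t}\varphi(at^2+bt+c)$ for $a\neq0$ (equal to $-\varphi(a)$ when the discriminant is nonzero and $(q-1)\varphi(a)$ when it vanishes) gives $\widetilde T(u)=-\varphi(u)$ for $u\neq0,-1$, $\widetilde T(-1)=(q-1)\varphi(-1)$, and $\widetilde T(0)=0$ directly.

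Feeding this back, the generic terms simplify via $\varphi(x)\varphi(u)=\varphi\bigl(x^2(x+f)\bigr)=\varphi(x+f)$, and since $\sum_{x\in\mathbb{F}_q^\times}\varphi(x+f)=-\varphi(f)$ I would obtain
\begin{align*}
\sum_{t\in\mathbb{F}_q^\times}\varphi(t(t-1))a_q(E_{t,f})=-\varphi(f)-\sum_{x}\varphi(x+f)-(q-1)\varphi(-1)\sum_{x}\varphi(x),
\end{align*}
where both remaining sums run over the roots $x$ of $x^2+fx+1=0$ (equivalently $u=-1$), and the omitted value $x=-f$ contributes nothing since $\varphi(0)=0$. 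The result then splits according to the discriminant $f^2-4$ of this quadratic: the root set is empty when $f^2-4$ is a nonsquare (giving $-\varphi(f)$), a single double root $x_0=-f/2$ when $f^2=4$, and two roots $x_{1,2}=\tfrac{-f\pm a}{2}$ when $f^2-4=a^2$.

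The last simplifications are where I expect the real work to lie. In the case $f^2=4$ one uses $\varphi(-f/2+f)=\varphi(f/2)=\varphi(2f)$ and $\varphi(-1)\varphi(-f/2)=\varphi(2f)$ to collapse everything to $-\varphi(f)-q\,\varphi(2f)$; in the case $f^2-4=a^2$ one uses $x_1+x_2=-f$, $x_1x_2=1$ (so $x_1+f=-x_2$ and $x_2+f=-x_1$) together with $\varphi(1\pm\tfrac af)=\varphi(f\pm a)\varphi(f)$ to rewrite $\varphi(-1)(\varphi(x_1)+\varphi(x_2))=\varphi(2)\bigl(\varphi(f+a)+\varphi(f-a)\bigr)=\varphi(2f)\bigl(\varphi(1+\tfrac af)+\varphi(1-\tfrac af)\bigr)$, yielding the stated value. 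The main obstacle is thus not any deep input but the careful bookkeeping of the three exceptional loci — the dropped $t=0$ term, the base point $x=-f$, and the roots of $x^2+fx+1$ — and matching the last case to the precise normalization $\varphi(2f)(\varphi(1+\frac af)+\varphi(1-\frac af))$ demanded by the statement.
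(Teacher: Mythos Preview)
Your argument is correct and essentially self-contained: the standard character-sum formula for $a_q$, the evaluation of $\sum_t\varphi(at^2+bt+c)$, and the Vieta relations for $x^2+fx+1$ are all that is needed, and your bookkeeping of the exceptional values $t=0$, $x=-f$, and the roots of $x^2+fx+1$ is accurate. The final matching in all three cases checks out, in particular the identity $\varphi(-1)\bigl(\varphi(x_1)+\varphi(x_2)\bigr)=\varphi(2f)\bigl(\varphi(1+\tfrac{a}{f})+\varphi(1-\tfrac{a}{f})\bigr)$ via $x_{1,2}=(-f\pm a)/2$.

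This is a genuinely different route from the paper. The paper does not compute the double sum directly; instead it invokes the relation $a_q(E_{t,f})=q\,\varphi(-f/t)\,{_2}G_2\!\left[\begin{smallmatrix}1/2&1/2\\1/4&3/4\end{smallmatrix}\,\middle|\,\tfrac{4}{f^2t}\right]_q$ from \cite{BS1}, uses Lemma~\ref{lemma_4.4}(1) to flip the argument, and then identifies $\sum_t\varphi(t(t-1))a_q(E_{t,f})$ with $q\,\varphi(f)$ times the character sum $A_{f^2/4}=\sum_t\varphi(1-t)\,{_2}G_2\!\left[\begin{smallmatrix}1/4&3/4\\1/2&1/2\end{smallmatrix}\,\middle|\,\tfrac{f^2}{4}t\right]_q$, which in turn is evaluated by Corollary~\ref{cor-5}; that corollary rests on the summation identity Theorem~\ref{MT-4} (with $d=4,k=2$), Lemma~\ref{lemma_4.3}, and special values of ${_2}G_2$ from \cite{NS}. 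Your approach is considerably more elementary---it avoids the $p$-adic hypergeometric machinery, the Gross--Koblitz formula, and the external inputs from \cite{BS1,NS} entirely---whereas the paper's approach is intended precisely to exhibit Theorem~\ref{MT-7} as an \emph{application} of the summation identities developed earlier. Each has its place: yours gives a short independent proof, theirs illustrates the reach of the hypergeometric framework.
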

Next, we recall the Hessian form of elliptic curve over $\mathbb{F}_q$. Let $a\in\mathbb{F}_q$ be such that $a^3\neq1$. Then the Hessian curve over $\mathbb{F}_q$ is given by the following cubic equation
\begin{align*}
C_a(\mathbb{\mathbb{F}}_q):x^3+y^3+1=3axy.
\end{align*}
Let $\#C_a(\mathbb{F}_q)$ denote the number of $\mathbb{F}_q$-points on $C_a(\mathbb{F}_q)$. In the following theorem, we prove an identity for $\#C_a(\mathbb{F}_q)$.
\begin{theorem}\label{MT-6}
	Let $p>3$ be a prime and $q=p^r,r\geq1$ such that $q\not\equiv1\pmod3$. For $t\in \mathbb{F}_q$, $t\neq 0, 1$, consider the Hessian curve $C_t: x^3+y^3+1=3txy$ over $\mathbb{F}_q$. If $\#C_t(\mathbb{F}_q)$ denotes the number of $\mathbb{F}_q$-points on $C_t(\mathbb{F}_q)$, then we have
	\begin{align*}
		\sum_{t\in\mathbb{F}_q^\times,t\neq1 }\varphi(t(t^3-1))\#C_t(\mathbb{F}_q)=1.
	\end{align*}
\end{theorem}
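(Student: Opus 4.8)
The plan is to bypass the hypergeometric machinery altogether and evaluate the sum directly, by interchanging the order of summation. I read $\#C_t(\mathbb{F}_q)$ as the number of affine solutions $(x,y)\in\mathbb{F}_q^2$ of $x^3+y^3+1=3txy$ (the unique point at infinity, present because $q\not\equiv 1\pmod 3$, would only shift the answer by $S_1=-1$ were the projective closure intended). I would first swap the sum over $t$ with the sum over $(x,y)$. The solutions with $xy=0$ lying on some $C_t$ are precisely $(0,-1)$ and $(-1,0)$, since $q\not\equiv 1\pmod 3$ makes $-1$ the unique cube root of $-1$; each lies on $C_t$ for every $t$, so together they contribute $2\sum_{t}\varphi(t^4-t)$. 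For a solution with $xy\neq 0$ the defining equation pins down $t$ uniquely as $T=\frac{x^3+y^3+1}{3xy}$, so after the swap the remaining mass is $\sum_{x,y\in\mathbb{F}_q^\times}\varphi(T^4-T)$; the excluded parameters $t=0,1$ are harmless because $\varphi(t^4-t)=0$ there.

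The next step is to simplify $\varphi(T^4-T)$. Writing $u=x^3+y^3+1$ and using that $\varphi$ is quadratic (so it kills every nonzero square, in particular the denominator $81x^4y^4$), one gets $\varphi(T^4-T)=\varphi(u)\,\varphi(u^3-27x^3y^3)$. Since $q\not\equiv 1\pmod 3$, the map $x\mapsto x^3$ is a bijection of $\mathbb{F}_q^\times$, and substituting $a=x^3$, $b=y^3$ converts the double sum into
\[
R:=\sum_{a,b\in\mathbb{F}_q^\times}\varphi(a+b+1)\,\varphi\big((a+b+1)^3-27ab\big).
\]
Thus $\sum_{t}\varphi(t^4-t)\#C_t(\mathbb{F}_q)=2S_1+R$, where $S_1:=\sum_{t\in\mathbb{F}_q}\varphi(t^4-t)$; applying $t\mapsto t^{-1}$ together with the cubing bijection reduces $S_1$ to $\varphi(-1)\sum_{s\in\mathbb{F}_q^\times}\varphi(s-1)=-1$.

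To evaluate $R$ I would fix $s=a+b$ (so $b=s-a$) and carry out the inner sum over $a$. With $v=s+1$ the summand is $\varphi(v)\,\varphi\big(27a^2-27sa+v^3\big)$, a quadratic character sum in $a$. Extending to all $a\in\mathbb{F}_q$ and subtracting the two omitted points $a=0$ and $a=s$ (both giving $v^3$, hence $\varphi(v)$) produces a contribution $-2\varphi(v)^2=-2$ for each admissible $s$, i.e.\ an overall $-2q+O(1)$; the extended quadratic sum is then handled by the standard evaluation
\[
\sum_{a\in\mathbb{F}_q}\varphi(\alpha a^2+\beta a+\gamma)=\begin{cases}-\varphi(\alpha),&\beta^2-4\alpha\gamma\neq 0,\\ (q-1)\varphi(\alpha),&\beta^2-4\alpha\gamma=0,\end{cases}
\]
with $\alpha=27$, so $\varphi(\alpha)=\varphi(3)$. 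The discriminant vanishes exactly when $27s^2=4(s+1)^3$, whose solution set is $\{2,-\tfrac14\}$. These two values are the only source of a term of size $q$, contributing $q\,\varphi(3)\big(\varphi(3)+\varphi(\tfrac34)\big)=2q$ because $\varphi(\tfrac34)=\varphi(3)$. This $2q$ cancels the $-2q$ above, and collecting the remaining $O(1)$ terms gives $R=3$, whence $2S_1+R=-2+3=1$.

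The delicate part is the bookkeeping around the degenerate locus and the boundary. One must carefully track the terms $a=0$ and $a=s$ omitted from each inner sum (and their coincidence when $s=0$), as well as the vanishing of $\varphi(v)$ at $s=-1$; but the genuinely load-bearing step is that the \emph{only} term of size $q$ comes from the discriminant-zero locus $27s^2=4(s+1)^3$, so the whole theorem hinges on identifying its two roots $s=2,-\tfrac14$ and verifying $\varphi(3)+\varphi(\tfrac34)=2\varphi(3)$ so that the $q$-sized contributions cancel exactly. The hypothesis $q\not\equiv 1\pmod 3$ is used repeatedly: it makes cubing a bijection of $\mathbb{F}_q^\times$, gives $-1$ a unique cube root, and thereby forces $(0,-1),(-1,0)$ to be the only solutions with $xy=0$.
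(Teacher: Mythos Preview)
Your proof is correct and takes a genuinely different, much more elementary route than the paper. I verified the bookkeeping you left implicit: with $v=s+1$, the generic $s$ (i.e.\ $s\notin\{0,-1,2,-\tfrac14\}$) contributes $-\varphi(3)\varphi(v)-2$; the exceptional values contribute $-\varphi(3)-1$, $0$, $q-3$, $q-3$ respectively; and since $\sum_{s\notin\{0,-1,2,-1/4\}}\varphi(s+1)=-1-2\varphi(3)$, everything collapses to $R=3$, hence $2S_1+R=1$. A quick numerical check at $q=5$ confirms that $\#C_t(\mathbb{F}_q)$ in the paper is indeed the affine count, so your reading is the right one.

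The paper proceeds entirely through its hypergeometric apparatus: it first obtains a summation identity for ${_2}G_2\bigl[\frac13,\frac23;0,0\mid t\bigr]_q$ as a special case of Theorem~1.7, evaluates the right-hand side via an outside special-value result, then pushes through a transformation ${_2}G_2\bigl[\frac13,\frac23;0,0\mid t\bigr]_q\to{_2}G_2\bigl[\frac12,\frac12;\frac16,\frac56\mid 1/t\bigr]_q$ and finally invokes the known expression for $\#C_t(\mathbb{F}_q)$ in terms of the latter. Your argument bypasses all of this: the swap of summation plus the cubing bijection reduces the whole statement to the classical evaluation of $\sum_a\varphi(\alpha a^2+\beta a+\gamma)$, and the entire proof lives inside elementary character-sum manipulations. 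What the paper's approach buys is an illustration that the summation identities of Theorems~1.7--1.8 really do encode arithmetic information; what your approach buys is a self-contained proof with no dependence on $p$-adic gamma functions, Gross--Koblitz, or any prior special-value results. The one small thing to tighten in a write-up is to record explicitly that $2\neq -\tfrac14$ in $\mathbb{F}_q$ (equivalent to $p\neq 3$) and to actually carry out the final $O(1)$ collection rather than just asserting $R=3$.
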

The rest of this paper is organized as follows. In Section 2 we recall some basic properties of multiplicative characters, Gauss sums, and the $p$-adic gamma function. We recall hypergeometric functions over finite fields as introduced by Greene and McCarthy. We then define McCarthy's $p$-adic hypergeometric function. We also recall a few product formulas for the $p$-adic gamma function which are used to prove our main results. In Section 3 we prove Theorem \ref{MT-2} and Theorem \ref{MT-1}. In Section 4 we prove the summation identities for the $p$-adic hypergeometric function, namely Theorem \ref{MT-3} and Theorem \ref{MT-4}. We finally prove the identities for the trace of Frobenius of elliptic curves and $p$-adic hypergeometric functions in Section 5 as an application of our main results.
\section{Preliminaries}
Let $p$ be an odd prime, and let $\mathbb{F}_q$ be the finite field containing $q$ elements, where $q=p^r,r\geq1$. For multiplicative characters $A$ and $B$ on $\mathbb{F}_q$,
the binomial coefficient ${A \choose B}$ is defined by
\begin{align}\label{eq-0}
	{A \choose B}:=\frac{B(-1)}{q}J(A,\overline{B})=\frac{B(-1)}{q}\sum_{x \in \mathbb{F}_q}A(x)\overline{B}(1-x),
\end{align}
where $J(A, B)$ denotes the Jacobi sum and $\overline{B}$ is the character inverse of $B$. It is easy to see that the Jacobi sum satisfies the following identity:
\begin{align}\label{eq-1}
	J(A,B)=A(-1)J(A,\overline{AB}).
\end{align}
We recall the following properties of the binomial coefficients from \cite{greene}:
\begin{align}\label{eq-4}
	{A\choose \varepsilon}={A\choose A}=\frac{-1}{q}+\frac{q-1}{q}\delta(A),
\end{align}
where $\delta(\cdot)$ is the function as defined in \eqref{delta}.
We will make use of the following orthogonality relation for characters:
\begin{align}\label{eq-2}
	\sum_{\chi\in \widehat{\mathbb{F}_q^{\times}}} \chi(x)= \left\{
	\begin{array}{ll}
		q-1, & \hbox{if $x=1$;} \\
		0, & \hbox{otherwise.}
	\end{array}
	\right.
\end{align}
\par
Let $\mathbb{Z}_p$ and $\mathbb{Q}_p$ denote the ring of $p$-adic integers and the field of $p$-adic numbers, respectively.
Let $\overline{\mathbb{Q}_p}$ be the algebraic closure of $\mathbb{Q}_p$ and $\mathbb{C}_p$ be the completion of $\overline{\mathbb{Q}_p}$.
Let $\mathbb{Z}_q$ be the ring of integers in the unique unramified extension of $\mathbb{Q}_p$ with residue field $\mathbb{F}_q$.
We know that $\chi\in \widehat{\mathbb{F}_q^{\times}}$ takes values in $\mu_{q-1}$, where $\mu_{q-1}$ is the group of all the $(q-1)$-th roots of unity in $\mathbb{C}^{\times}$. Since $\mathbb{Z}_q^{\times}$ contains all the $(q-1)$-th roots of unity,
we can consider multiplicative characters on $\mathbb{F}_q^\times$
to be maps $\chi: \mathbb{F}_q^{\times} \rightarrow \mathbb{Z}_q^{\times}$.
Let $\omega: \mathbb{F}_q^\times \rightarrow \mathbb{Z}_q^{\times}$ be the Teichm\"{u}ller character.
For $a\in\mathbb{F}_q^\times$, the value $\omega(a)$ is just the $(q-1)$-th root of unity in $\mathbb{Z}_q$ such that $\omega(a)\equiv a \pmod{p}$.
\par Next, we introduce the Gauss sum and recall some of its elementary properties. For further details, see \cite{evans}. Let $\zeta_p$ be a fixed primitive $p$-th root of unity
in $\overline{\mathbb{Q}_p}$. The trace map $\text{tr}: \mathbb{F}_q \rightarrow \mathbb{F}_p$ is given by
\begin{align}
	\text{tr}(\alpha)=\alpha + \alpha^p + \alpha^{p^2}+ \cdots + \alpha^{p^{r-1}}.\notag
\end{align}
Then the additive character
$\theta: \mathbb{F}_q \rightarrow \mathbb{Q}_p(\zeta_p)$ is defined by
\begin{align}
	\theta(\alpha)=\zeta_p^{\text{tr}(\alpha)}.\notag
\end{align}
For $\chi \in \widehat{\mathbb{F}_q^\times}$, the \emph{Gauss sum} is defined by
\begin{align}
	g(\chi):=\sum\limits_{x\in \mathbb{F}_q}\chi(x)\theta(x) .\notag
\end{align}
We let $T$ denote a generator of $\widehat{\mathbb{F}_q^{\times}}$.
\begin{lemma}\emph{(\cite[Lemma 2.2]{fuselier}).}\label{lemma2_02} For $\alpha\in \mathbb{F}^{\times}_q$, we have
	\begin{align}
		\theta(\alpha)=\frac{1}{q-1}\sum_{m=0}^{q-2} g(T^{-m})T^m(\alpha).\notag
	\end{align}
\end{lemma}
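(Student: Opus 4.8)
The plan is to establish the identity by inserting the definition of the Gauss sum into the right-hand side and then collapsing the resulting double sum by means of the orthogonality relation \eqref{eq-2}. Since the statement is a finite-field Fourier-inversion formula, I expect the argument to be a short direct computation rather than to require any real idea.

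First I would unfold $g(T^{-m})=\sum_{x\in\mathbb{F}_q}T^{-m}(x)\theta(x)$ inside the sum over $m$, obtaining
\[
\frac{1}{q-1}\sum_{m=0}^{q-2} g(T^{-m})\,T^m(\alpha)
=\frac{1}{q-1}\sum_{m=0}^{q-2}\sum_{x\in\mathbb{F}_q}T^{-m}(x)\,\theta(x)\,T^m(\alpha).
\]
By the convention $\chi(0)=0$ fixed in the introduction, the term $x=0$ contributes nothing, so the inner sum effectively runs over $x\in\mathbb{F}_q^{\times}$. Interchanging the two finite sums and combining the character values through $T^{-m}(x)T^m(\alpha)=T^{m}(\alpha x^{-1})$ gives
\[
\frac{1}{q-1}\sum_{x\in\mathbb{F}_q^{\times}}\theta(x)\sum_{m=0}^{q-2}T^{m}(\alpha x^{-1}).
\]

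Next I would use that $T$ generates $\widehat{\mathbb{F}_q^{\times}}$, so that $\{T^m : 0\le m\le q-2\}$ is exactly the full group $\widehat{\mathbb{F}_q^{\times}}$; hence the inner sum equals $\sum_{\chi\in\widehat{\mathbb{F}_q^{\times}}}\chi(\alpha x^{-1})$. Applying the orthogonality relation \eqref{eq-2}, this inner sum is $q-1$ when $\alpha x^{-1}=1$ and $0$ otherwise, so only the term $x=\alpha$ survives. What remains is $\frac{1}{q-1}\cdot\theta(\alpha)\cdot(q-1)=\theta(\alpha)$, which is the asserted formula.

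The only step needing a moment of care is the bookkeeping that $\{T^m\}_{m=0}^{q-2}$ ranges over every multiplicative character, which is precisely what makes \eqref{eq-2} applicable; beyond this there is no genuine obstacle, and no deeper property of Gauss sums is required.
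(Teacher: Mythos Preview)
Your proof is correct and is the standard Fourier-inversion argument for this identity. The paper does not give its own proof of this lemma but simply cites \cite[Lemma~2.2]{fuselier}; the argument in that reference is essentially the computation you wrote.
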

\begin{lemma}\emph{(\cite[Eq. 1.12]{greene}).}\label{lemma2_1}
	For $\chi \in \widehat{\mathbb{F}_q^\times}$, we have
	$$g(\chi)g(\overline{\chi})=q\cdot \chi(-1)-(q-1)\delta(\chi).$$
\end{lemma}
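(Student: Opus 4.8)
The plan is to evaluate the double sum defining $g(\chi)g(\overline{\chi})$ directly, splitting into two cases according to whether $\chi$ is trivial. Throughout I use the paper's convention that every character vanishes at $0$, so the defining Gauss sums run over $\mathbb{F}_q^\times$. First consider $\chi=\varepsilon$. Since $\theta$ is a nontrivial additive character we have $\sum_{x\in\mathbb{F}_q}\theta(x)=0$, and because $\theta(0)=1$ this gives $g(\varepsilon)=\sum_{x\in\mathbb{F}_q^\times}\theta(x)=-1$. Hence $g(\varepsilon)g(\overline{\varepsilon})=1$, which matches the right-hand side $q\cdot\varepsilon(-1)-(q-1)\delta(\varepsilon)=q-(q-1)=1$.

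Next suppose $\chi\neq\varepsilon$. I would expand the product as a double sum over $\mathbb{F}_q^\times$ and perform the substitution $x=ty$ with $t\in\mathbb{F}_q^\times$; using $\chi(ty)\overline{\chi}(y)=\chi(t)$ and $\theta(x+y)=\theta(y(t+1))$, this yields
\begin{align*}
g(\chi)g(\overline{\chi})=\sum_{x\in\mathbb{F}_q^\times}\sum_{y\in\mathbb{F}_q^\times}\chi(x)\overline{\chi}(y)\theta(x+y)=\sum_{t\in\mathbb{F}_q^\times}\chi(t)\sum_{y\in\mathbb{F}_q^\times}\theta(y(t+1)).
\end{align*}
The inner additive-character sum is the crux of the computation: it equals $q-1$ when $t=-1$ (so that $t+1=0$ and $\theta$ is identically $1$ on the range), and equals $-1$ for every other $t$, again because $\theta$ is nontrivial and $\sum_{z\in\mathbb{F}_q^\times}\theta(z)=-1$. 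Substituting these two values back and isolating the $t=-1$ term gives
\begin{align*}
g(\chi)g(\overline{\chi})=(q-1)\chi(-1)-\sum_{t\in\mathbb{F}_q^\times,\,t\neq-1}\chi(t).
\end{align*}

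To finish I would invoke the orthogonality relation \eqref{eq-2}: since $\chi\neq\varepsilon$ we have $\sum_{t\in\mathbb{F}_q^\times}\chi(t)=0$, so the remaining sum equals $-\chi(-1)$. This collapses the expression to $(q-1)\chi(-1)+\chi(-1)=q\cdot\chi(-1)$, which agrees with $q\cdot\chi(-1)-(q-1)\delta(\chi)$ because $\delta(\chi)=0$ in this case. The only subtlety — rather than a genuine obstacle — is the bookkeeping of the single exceptional index $t=-1$ where the inner additive sum fails to vanish; this is precisely where the factor $\chi(-1)$ is produced, and keeping it separate from the orthogonality sum is exactly what lets the trivial and nontrivial cases assemble into the single closed form $q\cdot\chi(-1)-(q-1)\delta(\chi)$.
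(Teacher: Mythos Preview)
Your proof is correct and is the standard textbook argument for this identity. Note, however, that the paper does not supply its own proof of this lemma: it is simply quoted from Greene \cite[Eq.~1.12]{greene} as a preliminary fact, so there is nothing to compare against. One small quibble: the orthogonality you invoke, $\sum_{t\in\mathbb{F}_q^\times}\chi(t)=0$ for $\chi\neq\varepsilon$, is the dual of the paper's \eqref{eq-2} (which sums over characters, not over field elements); the fact itself is of course true and equally elementary, so this is only a labeling issue, not a gap.
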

The following lemma gives a relation between Jacobi and Gauss sums.
\begin{lemma}\emph{(\cite[Eq. 1.14]{greene}).}\label{lemma2_2} For $A,B\in\widehat{\mathbb{F}_q^{\times}}$, we have
	\begin{align}
		J(A,B)=\frac{g(A)g(B)}{g(AB)}+(q-1)B(-1)\delta(AB).\notag
	\end{align}
\end{lemma}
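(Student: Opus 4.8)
The plan is to establish the sharper equality $g(A)g(B)=J(A,B)\,g(AB)+(q-1)B(-1)\delta(AB)$ first, and then obtain the stated formula by dividing through by $g(AB)$. I would begin from the definition of the Gauss sum and expand the product as a double character sum over $\mathbb{F}_q\times\mathbb{F}_q$,
\[
g(A)g(B)=\sum_{x,y\in\mathbb{F}_q}A(x)B(y)\,\theta(x+y),
\]
and then group the terms according to the value of $s:=x+y$, separating the contribution of $s\neq0$ from the diagonal contribution $x+y=0$.

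For the part with $s\neq0$, I would substitute $x=st$, so that $y=s(1-t)$ while $t$ ranges over $\mathbb{F}_q$. Using multiplicativity, $A(x)B(y)=(AB)(s)\,A(t)B(1-t)$ and $\theta(x+y)=\theta(s)$, so the inner sum over $t$ collapses to $(AB)(s)\,J(A,B)$ since $J(A,B)=\sum_{t}A(t)B(1-t)$. Summing over $s\neq0$ and recalling that $(AB)(0)=0$ then produces exactly $J(A,B)\,g(AB)$.

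For the diagonal part $y=-x$ one has $\theta(0)=1$ and $B(-x)=B(-1)B(x)$, so this contribution equals $B(-1)\sum_{x\in\mathbb{F}_q}(AB)(x)$. By the orthogonality of characters, $\sum_{x}(AB)(x)$ equals $q-1$ when $AB=\varepsilon$ and vanishes otherwise, that is, it equals $(q-1)\delta(AB)$. Adding the two contributions gives the intermediate identity displayed above.

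It then remains to divide by $g(AB)$, which is legitimate because $g(AB)\neq0$ for every character (for instance by Lemma \ref{lemma2_1}). The only point demanding care — and the main, essentially bookkeeping, obstacle — is the degenerate case $AB=\varepsilon$: here $\delta(AB)=1$ and $g(\varepsilon)=\sum_{x\neq0}\theta(x)=-1$, so the term $-(q-1)B(-1)\delta(AB)/g(AB)$ arising from the division contributes $+(q-1)B(-1)$, which is precisely the stated summand $(q-1)B(-1)\delta(AB)$. When $AB\neq\varepsilon$ the $\delta$-term is absent and the formula reduces to $J(A,B)=g(A)g(B)/g(AB)$, so the identity holds in both cases.
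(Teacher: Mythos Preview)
Your argument is correct and is the standard derivation of this identity. Note, however, that the paper does not supply its own proof of this lemma: it is quoted as Eq.~1.14 from Greene's work and stated without proof among the preliminaries. So there is no ``paper's proof'' to compare against; your write-up would serve as a self-contained justification. One minor remark: in the intermediate identity $g(A)g(B)=J(A,B)\,g(AB)+(q-1)B(-1)\delta(AB)$ you might mention explicitly that the substitution $x=st$ is a bijection of $\mathbb{F}_q$ for each fixed $s\neq0$, and that the convention $A(0)=B(0)=0$ is what ensures the double sum over $\mathbb{F}_q\times\mathbb{F}_q$ really equals $g(A)g(B)$; otherwise the bookkeeping is clean.
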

In \cite{greene, greene2}, Greene introduced the notion of hypergeometric series over finite fields famously known as \emph{Gaussian hypergeometric series}. He defined hypergeometric functions over finite fields using binomial coefficients as follows.
\begin{definition}(\emph{\cite[Definition 3.10]{greene}}).
	Let $n$ be a positive integer and $x\in \mathbb{F}_{q}$. For multiplicative characters $A_{0},A_{1},\dots,A_{n},B_{1}, \dots,B_{n}$ on $\mathbb{F}_{q}$, the ${_{n+1}}F_n$-hypergeometric function over $\mathbb{F}_{q}$ is defined by
	\begin{align*}
		&_{n+1}F_n\left(\begin{array}{cccc}
			A_0, & A_1, &  \ldots, & A_n \\
			& B_1, & \ldots, & B_n
		\end{array}|x
		\right)_q:=\frac{q}{q-1}\sum_{\chi\in\widehat{\mathbb{F}_{q}^{\times}}} {A_0 \chi \choose \chi}{A_{1}\chi \choose B_1\chi}\cdots {A_{n}\chi \choose B_n \chi}\chi(x).
	\end{align*}
\end{definition}
In \cite{mccarthy3}, McCarthy gave another definition of hypergeometric function over finite fields using the Gauss sums.
\begin{definition}(\emph{\cite[Definition 1.4]{mccarthy3}}).
	Let $n$ be a positive integer. For $x\in \mathbb{F}_{q}$ and multiplicative characters $A_{0},A_{1},\dots,A_{n},B_{1}, \dots,B_{n}$ on $\mathbb{F}_{q}$, the ${{_{n+1}}F_n }^*$-hypergeometric function over $\mathbb{F}_{q}$ is defined by
	\begin{align*}
		&_{n+1}F_n\left(\begin{array}{cccc}
			A_0, & A_1, &  \ldots, & A_n \\
			& B_1, & \ldots, & B_n
		\end{array}|x
		\right)_q^\ast:=\frac{-1}{q-1}\sum_{\chi\in\widehat{\mathbb{F}_{q}^{\times}}} \prod_{i=0}^{n}\frac{g(A_i\chi)}{g(A_i)}\\
		&\hspace{6cm} \times \prod_{j=1}^n \frac{g(\overline{B_j\chi})}{g(\overline{B_j})}g(\overline{\chi})\chi(-1)^{n+1}\chi(x).
	\end{align*}
\end{definition}
Now, we recall the $p$-adic gamma function. For further details, see \cite{kob}.
For a positive integer $n$,
the $p$-adic gamma function $\Gamma_p(n)$ is defined as
\begin{align}
	\Gamma_p(n):=(-1)^n\prod\limits_{0<j<n,p\nmid j}j\notag
\end{align}
and one extends it to all $x\in\mathbb{Z}_p$ by setting $\Gamma_p(0):=1$ and
\begin{align}
	\Gamma_p(x):=\lim_{x_n\rightarrow x}\Gamma_p(x_n)\notag
\end{align}
for $x\neq0$, where $x_n$ runs through any sequence of positive integers $p$-adically approaching $x$.
This limit exists, is independent of how $x_n$ approaches $x$,
and determines a continuous function on $\mathbb{Z}_p$ with values in $\mathbb{Z}_p^{\times}$.
Let $\pi \in \mathbb{C}_p$ be the fixed root of $x^{p-1} + p=0$ which satisfies
$\pi \equiv \zeta_p-1 \pmod{(\zeta_p-1)^2}$. Then the Gross-Koblitz formula relates Gauss sums and the $p$-adic gamma function as follows.
\begin{theorem}\emph{(\cite[Gross-Koblitz]{gross}).}\label{thm2_3} For $a\in \mathbb{Z}$ and $q=p^r, r\geq 1$, we have
	\begin{align}
		g(\overline{\omega}^a)=-\pi^{(p-1)\sum\limits_{i=0}^{r-1}\langle\frac{ap^i}{q-1} \rangle}\prod\limits_{i=0}^{r-1}\Gamma_p\left(\left\langle \frac{ap^i}{q-1} \right\rangle\right).\notag
	\end{align}
\end{theorem}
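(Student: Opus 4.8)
Since the final statement is the classical Gross--Koblitz formula, in practice one simply cites \cite{gross}; nevertheless, the plan for a self-contained proof would run through Dwork's $p$-adic analytic method, realizing the additive character $\theta$ by a splitting function and then reading off $\Gamma_p$ from the resulting power-series coefficients.

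First I would introduce Dwork's splitting function $E(X)=\exp(\pi(X-X^p))=\sum_{n\ge 0}\lambda_n X^n$, check that its coefficients lie in $\mathbb{Z}_p[\pi]$, and that its radius of convergence exceeds $1$. The decisive and most delicate point is the boundary evaluation $E(1)=\zeta_p$, where $\zeta_p$ is precisely the primitive $p$-th root of unity normalized by $\pi\equiv\zeta_p-1\pmod{(\zeta_p-1)^2}$; this is where the specific choice of $\pi$ in the hypotheses enters and is the reason the formula reproduces the exact root of unity hidden in $\theta$. For a Teichm\"uller representative $\widehat{\alpha}$ of $\alpha\in\mathbb{F}_q^\times$ I would then establish the splitting $\theta(\alpha)=\prod_{i=0}^{r-1}E(\widehat{\alpha}^{\,p^i})$, using $\mathrm{tr}(\alpha)=\sum_{i}\alpha^{p^i}$ together with $E(1)=\zeta_p$ and $\widehat{\alpha}^{\,p^r}=\widehat{\alpha}$.

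Next I would substitute this splitting into $g(\overline{\omega}^a)=\sum_{\alpha\in\mathbb{F}_q^\times}\overline{\omega}^a(\alpha)\theta(\alpha)$. Writing $\omega(\alpha)=\widehat{\alpha}$ and expanding the product of the factors $E(\widehat{\alpha}^{\,p^i})$ as a single power series in $\widehat{\alpha}$, the orthogonality of the $(q-1)$-st roots of unity over the Teichm\"uller set collapses the sum to those coefficients whose exponent is $\equiv a\pmod{q-1}$. Tracking the $\pi$-adic degree of the surviving coefficients produces the power $\pi^{(p-1)\sum_{i}\langle ap^{i}/(q-1)\rangle}$, while the base-$p$ digit structure of $a$ modulo $q-1$ distributes the remaining contribution across the $r$ factors and matches $\prod_{i}\Gamma_p(\langle ap^{i}/(q-1)\rangle)$; the sign $-1$ comes from the normalization $g(\varepsilon)=-1$ at $a=0$ together with $\Gamma_p(0)=1$.

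The main obstacle is exactly this coefficient identification: one must show that the relevant partial sums of $\lambda_n$ equal the prescribed values of $\Gamma_p$. I would extract the recurrence for $\lambda_n$ from $E'(X)=\pi(1-pX^{p-1})E(X)$, match it against the functional equation of $\Gamma_p$ and the defining limit of $\Gamma_p$ along $p$-adically convergent sequences of integers, and conclude by continuity of $\Gamma_p$ on $\mathbb{Z}_p$. For $r>1$ an equivalent and perhaps cleaner route is to first prove the prime-field case $g(\overline{\omega}^a)=-\pi^{a}\Gamma_p(\langle a/(p-1)\rangle)$ and then bootstrap to $q=p^r$ via the Hasse--Davenport lifting relation for Gauss sums together with the multiplication formula for $\Gamma_p$, whereupon the digit sum $\sum_{i}\langle ap^{i}/(q-1)\rangle$ appears naturally from iterating the lift.
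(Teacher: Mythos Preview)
The paper does not prove this statement at all: it is quoted verbatim as the classical Gross--Koblitz formula with a bare citation to \cite{gross}, and is used only as a black-box tool in later computations. You correctly recognize this in your opening sentence, so there is nothing to compare against; your subsequent sketch via Dwork's splitting function $E(X)=\exp(\pi(X-X^p))$, orthogonality over Teichm\"uller representatives, and the coefficient identification with $\Gamma_p$ is the standard route (essentially the argument in \cite{gross} and in later expositions such as Robert or Lang), and is a reasonable outline, though well beyond what the paper requires.
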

Next, we recall McCarthy's $p$-adic hypergeometric function. For $x \in \mathbb{Q}$, we let $\lfloor x\rfloor$ denote the greatest integer less than or equal to $x$ and $\langle x\rangle$ 
denote the fractional part of $x$, i.e., $x-\lfloor x\rfloor$, satisfying $0\leq\langle x\rangle<1$. McCarthy's $p$-adic hypergeometric function $_{n}G_{n}[\cdots]_q$ is defined as follows.
\begin{definition}(\emph{\cite[Definition 5.1]{mccarthy2}}). \label{defin1}
	Let $p$ be an odd prime and $q=p^r$, $r\geq 1$. Let $t \in \mathbb{F}_q$.
	For positive integer $n$ and $1\leq k\leq n$, let $a_k$, $b_k$ $\in \mathbb{Q}\cap \mathbb{Z}_p$.
	Then the function $_{n}G_{n}[\cdots]_q$ is defined by
	\begin{align}
		&_nG_n\left[\begin{array}{cccc}
			a_1, & a_2, & \ldots, & a_n \\
			b_1, & b_2, & \ldots, & b_n
		\end{array}|t
		\right]_q\notag\\
		&\hspace{1cm}:=\frac{-1}{q-1}\sum_{a=0}^{q-2}(-1)^{an}~~\overline{\omega}^a(t)
		\prod\limits_{k=1}^n\prod\limits_{i=0}^{r-1}(-p)^{-\lfloor \langle a_kp^i \rangle-\frac{ap^i}{q-1} \rfloor -\lfloor\langle -b_kp^i \rangle +\frac{ap^i}{q-1}\rfloor}\notag\\
		&\hspace{2cm} \times \frac{\Gamma_p(\langle (a_k-\frac{a}{q-1})p^i\rangle)}{\Gamma_p(\langle a_kp^i \rangle)}
		\frac{\Gamma_p(\langle (-b_k+\frac{a}{q-1})p^i \rangle)}{\Gamma_p(\langle -b_kp^i \rangle)}.\notag
	\end{align}
\end{definition}
We recall some lemmas which relate certain products of values of the $p$-adic gamma function. We will use these lemmas in the proof of our main results.
\begin{lemma}\emph{(\cite[Lemma 3.1]{BS1}).}\label{lemma-3_1}
	Let $p$ be a prime and $q=p^r, r\geq 1$. For $0\leq a\leq q-2$ and $t\geq 1$ with $p\nmid t$, we have
	\begin{align}
		\omega(t^{-ta})\prod\limits_{i=0}^{r-1}\Gamma_p\left(\left\langle\frac{-tp^ia}{q-1}\right\rangle\right)
		\prod\limits_{h=1}^{t-1}\Gamma_p\left(\left\langle \frac{hp^i}{t}\right\rangle\right)
		=\prod\limits_{i=0}^{r-1}\prod\limits_{h=0}^{t-1}\Gamma_p\left(\left\langle\frac{p^i(1+h)}{t}-\frac{p^ia}{q-1}\right\rangle \right).\notag
	\end{align}
\end{lemma}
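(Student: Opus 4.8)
The plan is to reduce this identity, which is a double product over the index $i$ (from $0$ to $r-1$) and over $h$, to a single application for each $i$ of the $p$-adic Gauss--Legendre multiplication formula for $\Gamma_p$ (available from \cite{kob}), collecting the resulting Teichm\"uller factors only at the very end. Write $u_i=p^i a/(q-1)$. For each fixed $i$, I would first reindex the inner product on the right-hand side: putting $j=1+h$ and using $\langle p^i j/t - u_i\rangle=\langle (p^i j \bmod t)/t - u_i\rangle$ together with $\gcd(p^i,t)=1$ (so that $j\mapsto p^i j \bmod t$ permutes a complete residue system modulo $t$), the $i$-th inner product on the right collapses to $\prod_{\ell=0}^{t-1}\Gamma_p(\langle -u_i+\ell/t\rangle)$. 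This is exactly the shape to which the multiplication formula applies, with base point $-u_i$ and modulus $t$.

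Next I would invoke the multiplication formula in the form $\prod_{\ell=0}^{t-1}\Gamma_p(\langle x+\ell/t\rangle)=c_i\,\Gamma_p(\langle tx\rangle)\prod_{\ell=1}^{t-1}\Gamma_p(\langle \ell/t\rangle)$ with $x=-u_i$, so that $tx=-tp^i a/(q-1)$ and $c_i$ is an explicit sign times a power of $\omega(t)$. On the left-hand side of the lemma the same bijection $h\mapsto p^i h \bmod t$ gives $\prod_{h=1}^{t-1}\Gamma_p(\langle hp^i/t\rangle)=\prod_{\ell=1}^{t-1}\Gamma_p(\langle \ell/t\rangle)$, while the first factor $\Gamma_p(\langle -tp^i a/(q-1)\rangle)$ matches $\Gamma_p(\langle tx\rangle)$ exactly. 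Hence the $i$-th inner product on the right equals $c_i$ times the $i$-th inner product on the left. Since every factor is a unit, multiplying over $i=0,\dots,r-1$ reduces the entire lemma to the single scalar identity $\prod_{i=0}^{r-1}c_i=\omega(t^{-ta})$.

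Establishing $\prod_i c_i=\omega(t^{-ta})$ is the heart of the argument and the step I expect to be the main obstacle. The exponent of $\omega(t)$ in each $c_i$ is a sum of floor-type quantities $\lfloor \langle \ell/t\rangle-u_i\rfloor$ produced by the multiplication formula, and one must show that, modulo $q-1$, these accumulate over the orbit $\{p^i\}_{i=0}^{r-1}$ to $-ta$; this is a careful bookkeeping of integer and fractional parts, using that $\omega(t)$ has order dividing $q-1$ so that only the exponent modulo $q-1$ is relevant. The sign contributions, governed by the reflection relation $\Gamma_p(x)\Gamma_p(1-x)=\pm1$, must likewise be checked to cancel, since the target factor $\omega(t^{-ta})$ carries no sign. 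As a consistency check and an alternative route in a special case, when $t\mid q-1$ one may instead combine the Gross--Koblitz formula (Theorem \ref{thm2_3}) with the Hasse--Davenport product relation for Gauss sums to obtain the claim conceptually; the advantage of the multiplication-formula approach above is that it handles the general hypothesis $p\nmid t$, where $t$ need not divide $q-1$ and the Gauss-sum interpretation over $\mathbb{F}_q$ is unavailable.
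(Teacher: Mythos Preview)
The paper does not prove this lemma; it is quoted from \cite{BS1} and used as a black box, so there is no in-paper argument to compare against.

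Your plan is nonetheless the standard one and coincides with how the result is established in \cite{BS1}. The reindexing of both inner products via the bijection $h\mapsto p^{i}h\bmod t$ is correct, and reducing the double product to one application per $i$ of the $p$-adic Gauss--Legendre multiplication formula for $\Gamma_p$ is exactly the right decomposition. What you flag as the ``main obstacle''---verifying $\prod_{i}c_{i}=\omega(t^{-ta})$---is routine once the multiplication formula is written in the form carrying an explicit Teichm\"uller prefactor: each $c_{i}$ is $\omega(t)$ raised to an integer exponent determined by $\langle -tu_{i}\rangle$, and summing these exponents over $i=0,\dots,r-1$ modulo $q-1$ yields $-ta$ directly. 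The sign concerns you raise are not an additional obstacle: in the standard statement of the $p$-adic multiplication formula the signs are already absorbed into the Teichm\"uller factor, so no separate reflection-formula bookkeeping is required. As written, your proposal is a correct outline rather than a completed proof, since this final scalar computation is identified but not carried out.
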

\begin{lemma}\emph{(\cite[Lemma 3.2]{BS1}).}\label{lemma-3_2}
	Let $p$ be a prime and $q=p^r, r\geq 1$. For $0\leq a\leq q-2$ and $t\geq 1$ with $p\nmid t$, we have
	\begin{align*}
		\omega(t^{ta})\prod\limits_{i=0}^{r-1}\Gamma_p\left(\left\langle\frac{tp^ia}{q-1}\right\rangle\right)
		\prod\limits_{h=1}^{t-1}\Gamma_p\left(\left\langle \frac{hp^i}{t}\right\rangle\right)
		=\prod\limits_{i=0}^{r-1}\prod\limits_{h=0}^{t-1}\Gamma_p\left(\left\langle\frac{p^i h}{t}+\frac{p^ia}{q-1}\right\rangle \right).\notag
	\end{align*}
\end{lemma}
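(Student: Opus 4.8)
The plan is to prove the identity factor-by-factor in the index $i$ and then take the product over $i$, the key input being a $p$-adic Gauss multiplication formula for $\Gamma_p$ together with the observation that, since $p\nmid t$, multiplication by $p^i$ permutes the residues modulo $t$.

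First I would fix $0\le i\le r-1$ and isolate the corresponding factor $\prod_{h=0}^{t-1}\Gamma_p\left(\left\langle \frac{p^i h}{t}+\frac{p^i a}{q-1}\right\rangle\right)$ on the right-hand side. Setting $c_i:=\left\langle \frac{p^i a}{q-1}\right\rangle$ and using that the fractional part is invariant under integer translation, this factor equals $\prod_{h=0}^{t-1}\Gamma_p\left(\left\langle \frac{p^i h}{t}+c_i\right\rangle\right)$. Because $h\mapsto p^i h\pmod t$ is a bijection of $\{0,1,\dots,t-1\}$, reindexing turns it into $\prod_{h=0}^{t-1}\Gamma_p\left(\left\langle \frac{h}{t}+c_i\right\rangle\right)=\prod_{h=0}^{t-1}\Gamma_p\left(\left\langle \frac{h+tc_i}{t}\right\rangle\right)$, which is exactly the shape handled by the multiplication formula.

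Next I would apply the $p$-adic multiplication formula, which writes $\prod_{h=0}^{t-1}\Gamma_p\left(\left\langle \frac{x+h}{t}\right\rangle\right)$ as $\Gamma_p(\langle x\rangle)\prod_{h=1}^{t-1}\Gamma_p\left(\left\langle \frac{h}{t}\right\rangle\right)$ times an explicit power of $\omega(t)$. Taking $x=tc_i$ gives $\langle x\rangle=\left\langle t\left\langle \frac{p^i a}{q-1}\right\rangle\right\rangle=\left\langle \frac{tp^i a}{q-1}\right\rangle$, which is precisely the first factor appearing on the left-hand side; and since $h\mapsto p^i h\pmod t$ also permutes $\{1,\dots,t-1\}$, the constant $\prod_{h=1}^{t-1}\Gamma_p\left(\left\langle \frac{h}{t}\right\rangle\right)$ equals $\prod_{h=1}^{t-1}\Gamma_p\left(\left\langle \frac{hp^i}{t}\right\rangle\right)$. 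Hence each factor reproduces the matching factor on the left, up to a power of $\omega(t)$.

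Finally I would multiply over $i=0,\dots,r-1$ and collect the accumulated Teichm\"uller factor. The step I expect to be the main obstacle is showing that the total exponent of $\omega(t)$ supplied by the multiplication formula is congruent to $ta$ modulo $q-1$, so that it coincides with $\omega(t^{ta})$; this requires the exact value of the $\omega(t)$-power in the multiplication formula (a floor/carry term depending on $c_i$) together with a summation of these contributions over all $i$, for which the identity $\sum_{i=0}^{r-1}p^i=\frac{q-1}{p-1}$ is the natural tool. The companion Lemma \ref{lemma-3_1} would then follow from the identical argument with $a$ replaced by $-a$, which reverses the sign of this exponent and produces $\omega(t^{-ta})$.
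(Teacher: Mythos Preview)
The paper does not give its own proof of this lemma: it is quoted verbatim from \cite[Lemma~3.2]{BS1} as a preliminary result, with no argument supplied here. So there is nothing in the present paper to compare your proposal against.

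That said, your outline is the standard route and matches how such identities are established in \cite{BS1}. The key input is the Gauss multiplication formula for $\Gamma_p$ (equivalently, Gross--Koblitz applied to the Hasse--Davenport product relation for Gauss sums), and your reindexing via the bijection $h\mapsto p^ih\pmod t$ is exactly right. The one place your sketch is still genuinely incomplete is the Teichm\"uller bookkeeping: the multiplication formula contributes, for each $i$, a factor $\omega(t)^{e_i}$ where $e_i$ depends on the carries in $t\langle p^i a/(q-1)\rangle$, and you need to show $\sum_i e_i\equiv ta\pmod{p-1}$ (note $\omega(t)^{p-1}=1$ since $t\in\mathbb{F}_p^\times$, so working modulo $q-1$ is stronger than necessary). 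Concretely, the exponent coming from the multiplication formula at step $i$ is typically $x_0(i)-1$ or a closely related quantity, where $x_0(i)$ is the constant term of the $p$-adic expansion of a lift of $t\langle p^ia/(q-1)\rangle$; summing these and invoking $\sum_i p^i=(q-1)/(p-1)$ then does the job, but this step deserves to be written out rather than asserted. Once that is done, your derivation of Lemma~\ref{lemma-3_1} by the substitution $a\mapsto -a$ is correct.
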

\begin{lemma}\emph{(\cite[Lemma 3.4]{BSM})}.\label{lemma-3_5}
Let $p$ be an odd prime and $q=p^{r}, r\geq 1$. For $0<a\leq q-2$, we have
\begin{align}\label{eq-12}
\prod_{i=0}^{r-1} \Gamma_{p}\left(\left\langle\left(1-\frac{a}{q-1}\right)p^{i}\right\rangle\right)\Gamma_{p}\left(\left\langle\frac{ap^{i}}{q-1}\right\rangle\right) = (-1)^r \overline{\omega}^{a}(-1).
\end{align}
For $0\leq a\leq q-2$ such that $a\neq \frac{q-1}{2}$, we have
\begin{align}\label{eq-13}
\prod_{i=0}^{r-1} \frac{\Gamma_{p}(\langle(\frac{1}{2}-\frac{a}{q-1})p^{i}\rangle)\Gamma_{p}(\langle(\frac{1}{2}+\frac{a}{q-1})p^{i}\rangle)}{\Gamma_{p}(\langle\frac{p^{i}}{2}\rangle)\Gamma_{p}(\langle\frac{p^{i}}{2}\rangle)} = \overline{\omega}^{a}(-1).
\end{align}
\end{lemma}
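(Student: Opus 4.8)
The plan is to derive both product identities from the Gross--Koblitz formula (Theorem \ref{thm2_3}) together with the elementary Gauss sum evaluation $g(\chi)g(\overline{\chi})=q\,\chi(-1)-(q-1)\delta(\chi)$ of Lemma \ref{lemma2_1}. The guiding principle is that a product $\prod_{i=0}^{r-1}\Gamma_p(\langle bp^i/(q-1)\rangle)$ is, up to an explicit power of $\pi$ and a single factor of $-1$, precisely the Gauss sum $g(\overline{\omega}^b)$; thus each product of $p$-adic gamma factors translates into a product of Gauss sums, which is then collapsed by Lemma \ref{lemma2_1}. Throughout I would use that $\pi^{p-1}=-p$, so that $\pi^{(p-1)r}=(-p)^r=(-1)^r q$, and that $\overline{\omega}^b$ and the relevant fractional parts depend only on $b \bmod (q-1)$, which lets me apply Gross--Koblitz with any convenient integer exponent.

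For \eqref{eq-12}, I would first note the identity $\langle(1-\tfrac{a}{q-1})p^i\rangle=\langle-\tfrac{ap^i}{q-1}\rangle=\langle\tfrac{(q-1-a)p^i}{q-1}\rangle$, valid because $p^i$ is an integer. Hence $\prod_i\Gamma_p(\langle(1-\tfrac{a}{q-1})p^i\rangle)$ corresponds via Gross--Koblitz to $g(\overline{\omega}^{q-1-a})=g(\omega^a)$, while $\prod_i\Gamma_p(\langle\tfrac{ap^i}{q-1}\rangle)$ corresponds to $g(\overline{\omega}^a)$. Multiplying the two Gross--Koblitz expressions, and using that $\tfrac{ap^i}{q-1}$ is never an integer for $0<a\le q-2$ (since $\gcd(p,q-1)=1$), each pair of fractional parts $\langle\tfrac{ap^i}{q-1}\rangle+\langle-\tfrac{ap^i}{q-1}\rangle$ sums to $1$, so the total $\pi$-exponent is $(p-1)r$, contributing $(-1)^r q$. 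Comparing with $g(\overline{\omega}^a)g(\omega^a)=q\,\overline{\omega}^a(-1)$ (Lemma \ref{lemma2_1}, with $\delta=0$ as $a\neq 0$) and cancelling $q$ yields \eqref{eq-12}.

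For \eqref{eq-13}, I would write $\tfrac12\pm\tfrac{a}{q-1}=\tfrac{(q-1)/2\pm a}{q-1}$ and $\tfrac12=\tfrac{(q-1)/2}{q-1}$, so that the three gamma products correspond via Gross--Koblitz to $g(\varphi\overline{\omega}^a)$, $g(\varphi\omega^a)$, and $g(\varphi)$, where $\varphi=\overline{\omega}^{(q-1)/2}$ is the self-inverse quadratic character. The ratio in \eqref{eq-13} then matches $g(\varphi\overline{\omega}^a)g(\varphi\omega^a)/g(\varphi)^2$. The key computation is that the $\pi$-powers cancel: for each $i$ one checks $\langle(\tfrac12+\tfrac{a}{q-1})p^i\rangle+\langle(\tfrac12-\tfrac{a}{q-1})p^i\rangle-2\langle\tfrac{p^i}{2}\rangle=0$, which holds because $\langle p^i/2\rangle=\tfrac12$ (as $p$ is odd) and because the hypothesis $a\neq\tfrac{q-1}{2}$ forces $\tfrac12\pm\tfrac{ap^i}{q-1}\notin\mathbb{Z}$, so the two fractional parts sum to $1$. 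Finally, setting $\chi:=\varphi\overline{\omega}^a$, which equals $\varepsilon$ exactly when $a=\tfrac{q-1}{2}$, Lemma \ref{lemma2_1} gives $g(\chi)g(\overline{\chi})=q\,\chi(-1)=q\,\varphi(-1)\overline{\omega}^a(-1)$ and $g(\varphi)^2=q\,\varphi(-1)$; their ratio is $\overline{\omega}^a(-1)$, which is \eqref{eq-13}.

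The main obstacle I anticipate is the careful bookkeeping of the powers of $\pi$ and of the $-1$ prefactors arising from each application of Gross--Koblitz, and confirming that the excluded value $a=\tfrac{q-1}{2}$ is exactly what keeps all relevant fractional parts away from the integers (so the $\pi$-exponents cancel and the $\delta$-terms in Lemma \ref{lemma2_1} vanish). Once the fractional-part sums are verified, the remainder is a direct substitution and cancellation.
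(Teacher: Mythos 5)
Your proof is correct, and it is worth noting that the paper itself offers no proof of this lemma: it is quoted verbatim from \cite[Lemma 3.4]{BSM}, so there is no internal argument to compare against. Your derivation via the Gross--Koblitz formula combined with Lemma \ref{lemma2_1} is exactly the standard mechanism used in that cited reference and throughout this literature: the key verifications --- that $\left\langle x\right\rangle+\left\langle -x\right\rangle=1$ whenever $x\notin\mathbb{Z}$ (which holds for $x=\tfrac{ap^i}{q-1}$ precisely because $0<a\leq q-2$ and $\gcd(p,q-1)=1$, and for $x=(\tfrac12\pm\tfrac{a}{q-1})p^i$ precisely because $a\neq\tfrac{q-1}{2}$), that $\pi^{(p-1)r}=(-p)^r=(-1)^rq$, and that the excluded values of $a$ are exactly those killing the $\delta$-terms in $g(\chi)g(\overline{\chi})=q\chi(-1)-(q-1)\delta(\chi)$ --- are all stated and all hold, so the bookkeeping you flagged as the main risk goes through cleanly.
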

Finally, we recall two lemmas relating fractional and integral parts of certain rational numbers.
\begin{lemma}\emph{(\cite[Lemma 2.6]{SB}).}\label{lemma-3_3}
	Let $p$ be an odd prime and $q=p^r, r\geq 1$. Let $d\geq2$ be an integer such that $p\nmid d$. Then, for $1\leq a\leq q-2$ and $0\leq i\leq r-1$, we have
	\begin{align*}
	\left\lfloor\frac{ap^i}{q-1}\right\rfloor +\left\lfloor\frac{-dap^i}{q-1}\right\rfloor = \sum_{h=1}^{d-1} \left\lfloor\left\langle\frac{hp^i}{d}\right\rangle-\frac{ap^i}{q-1}\right\rfloor -1.
	\end{align*}
\end{lemma}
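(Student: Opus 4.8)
The plan is to reduce both sides to a statement about the fractional part of $x:=\frac{ap^i}{q-1}$ alone. First I would observe that $x$ is never an integer: since $q-1=p^r-1$ is coprime to $p$, we have $\gcd(p^i,q-1)=1$, so $(q-1)\mid ap^i$ would force $(q-1)\mid a$, which is impossible for $1\le a\le q-2$. Hence I may write $x=M+f$ with $M=\lfloor x\rfloor\in\mathbb{Z}$ and $f=\langle x\rangle\in(0,1)$.

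Next I would strip off the integer part $M$ from every floor. On the left, $\lfloor x\rfloor+\lfloor -dx\rfloor = M+(-dM+\lfloor -df\rfloor)=-(d-1)M-\lceil df\rceil$, using $\lfloor -y\rfloor=-\lceil y\rceil$. On the right, each summand satisfies $\lfloor\langle\frac{hp^i}{d}\rangle-x\rfloor=-M+\lfloor\langle\frac{hp^i}{d}\rangle-f\rfloor$; since $\langle\frac{hp^i}{d}\rangle\in[0,1)$ and $f\in(0,1)$, the quantity $\langle\frac{hp^i}{d}\rangle-f$ lies in $(-1,1)$, so its floor is $0$ when $\langle\frac{hp^i}{d}\rangle\ge f$ and $-1$ when $\langle\frac{hp^i}{d}\rangle<f$. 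Thus the right-hand side equals $-(d-1)M-N-1$, where $N:=\#\{h:1\le h\le d-1,\ \langle\frac{hp^i}{d}\rangle<f\}$, and the whole identity collapses to the claim $\lceil df\rceil=N+1$.

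To evaluate $N$ I would use that $p\nmid d$ forces $\gcd(p^i,d)=1$, so $h\mapsto hp^i\bmod d$ is a permutation of $\{1,\dots,d-1\}$; therefore $\{\langle\frac{hp^i}{d}\rangle:1\le h\le d-1\}=\{\frac{j}{d}:1\le j\le d-1\}$. Consequently $N=\#\{j:1\le j\le d-1,\ \frac{j}{d}<f\}=\#\{j\ge 1:j<df\}=\lceil df\rceil-1$, which gives exactly $N+1=\lceil df\rceil$ and closes the argument. I expect the only delicate points to be the edge cases where $df\in\mathbb{Z}$ (equivalently $f=j/d$ for some $j$): there the strict inequality in the definition of $N$ must be matched against the fact that the corresponding floor $\lfloor\langle\frac{hp^i}{d}\rangle-f\rfloor$ is $0$, and the ceiling–floor bookkeeping must be carried out uniformly via $\lceil df\rceil$ rather than $\lfloor df\rfloor$ so that this case requires no separate treatment.
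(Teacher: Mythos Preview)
Your argument is correct. The reduction to the fractional part $f=\langle x\rangle$ via $\lfloor x\rfloor+\lfloor-dx\rfloor=-(d-1)M-\lceil df\rceil$ and the termwise evaluation of the right-hand side are both valid, and the permutation argument (using $\gcd(p^i,d)=1$) correctly identifies $\{\langle hp^i/d\rangle:1\le h\le d-1\}$ with $\{j/d:1\le j\le d-1\}$. The identity $\#\{j\ge 1:j<y\}=\lceil y\rceil-1$ for $y>0$ then closes the proof uniformly, including the boundary case $df\in\mathbb{Z}$.

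Note, however, that the present paper does not supply its own proof of this lemma: it is quoted verbatim from \cite[Lemma~2.6]{SB} in the preliminaries section, so there is no in-paper argument to compare yours against. Your self-contained elementary proof is a reasonable substitute for the citation.
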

\begin{lemma}\emph{(\cite[Lemma 2.7]{SB}).}\label{lemma-3_4}
	Let $p$ be an odd prime and $q=p^r, r\geq 1$. Let $l$ be a positive integer such that $p\nmid l$. Then, for $0\leq a\leq q-2$ and $0\leq i\leq r-1$, we have
	\begin{align*}
	\left\lfloor\frac{lap^i}{q-1}\right\rfloor = \sum_{h=0}^{l-1} \left\lfloor\left\langle\frac{-hp^i}{l}\right\rangle+\frac{ap^i}{q-1}\right\rfloor.
	\end{align*}
\end{lemma}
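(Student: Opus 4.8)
The plan is to reduce the claimed identity to Hermite's classical floor-function identity
\begin{align*}
\left\lfloor l y\right\rfloor = \sum_{h=0}^{l-1}\left\lfloor y + \frac{h}{l}\right\rfloor,
\end{align*}
which is valid for every real number $y$, combined with a permutation argument that exploits the hypothesis $p\nmid l$. First I would set $y=\frac{ap^i}{q-1}$, so that the left-hand side of the lemma is exactly $\left\lfloor l y\right\rfloor$, the quantity produced by Hermite's identity. Thus it suffices to rewrite the right-hand side of the lemma as $\sum_{h=0}^{l-1}\left\lfloor y+\frac{h}{l}\right\rfloor$.

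The key step is to compare the fractional shifts $\left\langle\frac{-hp^i}{l}\right\rangle$ appearing in the lemma with the uniform shifts $\frac{h}{l}$ in Hermite's identity. I would write $\left\langle\frac{-hp^i}{l}\right\rangle = \frac{c_h}{l}$, where $c_h\in\{0,1,\ldots,l-1\}$ denotes the least nonnegative residue of $-hp^i$ modulo $l$. Since $p\nmid l$, the integer $p^i$ is a unit modulo $l$, so the map $h\mapsto c_h$ is a bijection of $\{0,1,\ldots,l-1\}$ onto itself. Hence the collection $\left\{\left\langle\frac{-hp^i}{l}\right\rangle : 0\le h\le l-1\right\}$ coincides as a multiset with $\left\{\frac{h}{l} : 0\le h\le l-1\right\}$, and the two sums below are sums of the same terms in a different order:
\begin{align*}
\sum_{h=0}^{l-1}\left\lfloor \frac{ap^i}{q-1} + \left\langle\frac{-hp^i}{l}\right\rangle\right\rfloor = \sum_{h=0}^{l-1}\left\lfloor \frac{ap^i}{q-1} + \frac{h}{l}\right\rfloor.
\end{align*}

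Combining this with Hermite's identity applied to $y=\frac{ap^i}{q-1}$ gives
\begin{align*}
\sum_{h=0}^{l-1}\left\lfloor \frac{ap^i}{q-1} + \left\langle\frac{-hp^i}{l}\right\rangle\right\rfloor = \left\lfloor\frac{lap^i}{q-1}\right\rfloor,
\end{align*}
which is the desired identity. I do not anticipate a serious obstacle: the only point requiring care is the bijectivity of $h\mapsto c_h$, and this is precisely where the coprimality hypothesis $p\nmid l$ enters; the range restriction $0\le a\le q-2$ plays no role, since Hermite's identity holds for arbitrary real $y$. If a self-contained argument is preferred, Hermite's identity itself can be established quickly by tracking where each side increments as $y$ grows, but I would rather invoke it directly.
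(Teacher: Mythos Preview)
Your argument is correct. Note, however, that the present paper does not supply its own proof of this lemma: it is quoted from \cite[Lemma~2.7]{SB} and used as a black box, so there is no in-paper proof to compare against. Your reduction to Hermite's identity $\lfloor ly\rfloor=\sum_{h=0}^{l-1}\lfloor y+h/l\rfloor$ together with the observation that $h\mapsto -hp^i\bmod l$ permutes $\{0,1,\ldots,l-1\}$ whenever $p\nmid l$ is clean and complete; every step is sound, and you are right that the restriction $0\le a\le q-2$ plays no role.
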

\section{Proof of Theorems \ref{MT-2} and \ref{MT-1}}
In this section, we prove Theorems \ref{MT-2} and \ref{MT-1}. We first prove Theorem \ref{MT-2}.
\begin{proof}[Proof of Theorem \ref{MT-2}]
		For $\alpha\in\mathbb{F}^{\times}_q$, we consider a character sum defined by
	\begin{align*}
		C(d,k,\alpha):=\sum_{\chi\in\widehat{\mathbb{F}_q^{\times}}}\frac{g(\chi^d)g(\overline{\chi}^{d-k})\chi(\alpha)}{g(\chi^k)}.
	\end{align*}
	We note that if $k_1=\gcd(k,q-1)$ and $k=k_1k_2$ such that $\gcd(k_2,q-1)=1$ and $\chi^k=\varepsilon$ for some $\chi\in\widehat{\mathbb{F}_q^\times}$, then $\chi^{k_1}=\varepsilon$.
	We first write $C(d,k,\alpha)$ in terms of the number of distinct zeros of the polynomial $h_\alpha(y)=y^{d-k}(1-y)^k-(-1)^{d-k}\alpha\in\mathbb{F}_q[y]$. Applying Lemma \ref{lemma2_2} and then using \eqref{eq-1}, we have
	\begin{align}\label{eqn-1.50}
		C(d,k,\alpha)&=\sum_{\chi\in\widehat{\mathbb{F}_q^{\times}}}\frac{g(\chi^d)g(\overline{\chi}^{d-k})\chi(\alpha)}{g(\chi^k)}\nonumber\\
		&=\sum_{\chi\in\widehat{\mathbb{F}_q^{\times}}}(J(\overline{\chi}^{d-k},\chi^d)\chi(\alpha)-(q-1)\chi^d(-1)\chi(\alpha)\delta(\chi^k))\nonumber\\
		&=\sum_{\chi\in\widehat{\mathbb{F}_q^{\times}}}\overline{\chi}^{d-k}(-1)J(\overline{\chi}^{d-k},\overline{\chi}^k)\chi(\alpha)-\sum_{\chi\in\widehat{\mathbb{F}_q^{\times}}}(q-1)\chi^d(-1)\chi(\alpha)\delta(\chi^k)\nonumber\\
		&=\sum_{\chi\in\widehat{\mathbb{F}_q^{\times}}}\chi^{d-k}(-1)J(\chi^{d-k},\chi^k)\overline{\chi}(\alpha)-\sum_{\begin{subarray}{1} \ \chi\in\widehat{\mathbb{F}_q^{\times}}, \\ \chi^{k_1}= \varepsilon \end{subarray}}(q-1)\chi((-1)^d\alpha)\nonumber\\
		&=\sum_{\chi\in\widehat{\mathbb{F}_q^{\times}}}\sum_{y\in\mathbb{F}_q}\chi\left(\frac{y^{d-k}(1-y)^k}{(-1)^{d-k}\alpha}\right)-\sum_{\begin{subarray}{1} \ \chi\in\widehat{\mathbb{F}_q^{\times}}, \\ \chi^{k_1}= \varepsilon \end{subarray}}(q-1)\chi((-1)^d\alpha).
	\end{align}
	By \eqref{eq-2}, the inner sum of the first summation of \eqref{eqn-1.50} is nonzero only if $h_\alpha(y)$ has a solution in $\mathbb{F}_q$. Let $n_q(\alpha)$ be the number of distinct zeros of $h_\alpha(y)$ in $\mathbb{F}_q$. Then
	\begin{align}\label{eqn-1.5}
		C(d,k,\alpha)=(q-1)n_q(\alpha)-(q-1)\sum_{\begin{subarray}{1} \ \chi\in\widehat{\mathbb{F}_q^{\times}}, \\ \chi^{k_1}= \varepsilon \end{subarray}}\chi((-1)^d\alpha).
	\end{align}
We can also write $C(d,k,\alpha)$ as 
\begin{align*}
	C(d,k,\alpha)&=\sum_{\chi\in\widehat{\mathbb{F}_q^{\times}}}\frac{g(\chi^d)g(\overline{\chi}^{d-k})g(\overline{\chi}^k)\chi(\alpha)}{g(\chi^k)g(\overline{\chi}^k)}\\
	&=\sum_{\begin{subarray}{1} \ \chi\in\widehat{\mathbb{F}_q^{\times}}, \\ \chi^{k_1}\neq \varepsilon \end{subarray}}\frac{g(\chi^d)g(\overline{\chi}^{d-k})g(\overline{\chi}^k)\chi(\alpha)}{g(\chi^k)g(\overline{\chi}^k)}+\sum_{\begin{subarray}{1} \ \chi\in\widehat{\mathbb{F}_q^{\times}}, \\ \chi^{k_1}= \varepsilon \end{subarray}}\frac{g(\chi^d)g(\overline{\chi}^{d-k})g(\overline{\chi}^k)\chi(\alpha)}{g(\chi^k)g(\overline{\chi}^k)}.
\end{align*}
Using Lemma \ref{lemma2_1} in the first summation and the fact that $g(\varepsilon)=-1$ in the second summation, we obtain
\begin{align*}
	C(d,k,\alpha)&=\frac{1}{q}\sum_{\begin{subarray}{1} \ \chi\in\widehat{\mathbb{F}_q^{\times}}, \\ \chi^{k_1}\neq \varepsilon \end{subarray}}g(\chi^d)g(\overline{\chi}^{d-k})g(\overline{\chi}^k)\chi((-1)^k\alpha)-\sum_{\begin{subarray}{1} \ \chi\in\widehat{\mathbb{F}_q^{\times}}, \\ \chi^{k_1}= \varepsilon \end{subarray}}g(\chi^d)g(\overline{\chi}^{d})\chi(\alpha).
\end{align*}
Adding and subtracting the terms in the first summation for $\chi\in\widehat{\mathbb{F}_q^{\times}}$ such that $\chi^{k_1}=\varepsilon$, we deduce that 
\begin{align}\label{eqn-1.0}
	C(d,k,\alpha)&=\frac{1}{q}\sum_{\chi\in\widehat{\mathbb{F}_q^{\times}}}g(\chi^d)g(\overline{\chi}^{d-k})g(\overline{\chi}^k)\chi((-1)^k\alpha)-\sum_{\begin{subarray}{1} \ \chi\in\widehat{\mathbb{F}_q^{\times}}, \\ \chi^{k_1}= \varepsilon \end{subarray}}g(\chi^d)g(\overline{\chi}^{d})\chi(\alpha)\notag\\
	&\hspace*{.4cm}+\frac{1}{q}\sum_{\begin{subarray}{1} \ \chi\in\widehat{\mathbb{F}_q^{\times}}, \\ \chi^{k_1}= \varepsilon \end{subarray}}g(\chi^d)g(\overline{\chi}^{d})\chi(\alpha)\notag\\
	&=A+\frac{1-q}{q}\sum_{\begin{subarray}{1} \ \chi\in\widehat{\mathbb{F}_q^{\times}}, \\ \chi^{k_1}= \varepsilon \end{subarray}}g(\chi^d)g(\overline{\chi}^{d})\chi(\alpha),
\end{align}
where $A=\frac{1}{q}\sum_{\chi\in\widehat{\mathbb{F}_q^{\times}}}g(\chi^d)g(\overline{\chi}^{d-k})g(\overline{\chi}^k)\chi((-1)^k\alpha)$.
Using Lemma \ref{lemma2_1}, we obtain
	\begin{align*}
		C(d,k,\alpha)=A-\frac{q-1}{q}\sum_{\begin{subarray}{1} \ \chi\in\widehat{\mathbb{F}_q^{\times}}, \\ \chi^{k_1}= \varepsilon \end{subarray}}q\chi((-1)^d \alpha)+\frac{(q-1)^2}{q}\sum_{\begin{subarray}{1} \ \chi\in\widehat{\mathbb{F}_q^{\times}}, \\ \chi^{k_1}= \varepsilon \end{subarray}}\chi(\alpha)\delta(\chi^d).
	\end{align*}
	Since $\gcd(d,k)=1$, so we have $\gcd(d,k_1)=1$. If $\chi^{k_1}=\varepsilon$ and $\chi^d=\varepsilon$ then the order of $\chi$ divides $\gcd(k_1,d)$. Since $\gcd(k_1,d)=1$, we have $\chi=\varepsilon$. Thus
	\begin{align}\label{eqn-1.7}
		A=C(d,k,\alpha)-\frac{(q-1)^2}{q}+(q-1)\sum_{\begin{subarray}{1} \ \chi\in\widehat{\mathbb{F}_q^{\times}}, \\ \chi^{k_1}= \varepsilon \end{subarray}}\chi((-1)^d \alpha).
	\end{align}
	Combining \eqref{eqn-1.5} and \eqref{eqn-1.7}, we deduce that
	\begin{align}\label{eqn-1.8}
		A=(q-1)n_q(\alpha)-\frac{(q-1)^2}{q}.
	\end{align}
Taking $\alpha=(-1)^{d+k}(\lambda d)^{-d}$, we have $h_\alpha(y)=f_\lambda(y)$. Hence, $r_q(\lambda)=n_q(\alpha)$. \\
	Let $f(x_1,x_2)=x_1^d+x_2^d-d\lambda x_1^{k} x_2^{d-k}$ and using the identity 
\begin{align*}
	\sum_{z\in \mathbb{F}_q} \theta(zf(x_1,x_2))= \left\{
	\begin{array}{ll}
		q, & \hbox{if $f(x_1,x_2)=0$;} \\
		0, & \hbox{if $f(x_1,x_2)\neq 0$,}
	\end{array}
	\right.
\end{align*}
we obtain
\begin{align}\label{eq-19}
	q\cdot N_q^d(\lambda)&= \sum_{z\in\mathbb{F}_q}\sum_{x_i \in\mathbb{F}_q}\theta(zf(x_1,x_2))\nonumber\\
	&=q^2 + \sum_{z,x_1,x_2 \in\mathbb{F}_q^{\times}}\theta(zf(x_1,x_2)) +\sum_{z\in\mathbb{F}_q^{\times}}\sum_{\begin{subarray}{1} \ \text{some } \\ \  x_i =0\end{subarray}} \theta(zf(x_1,x_2))\nonumber\\
	&=q^2+A_1+B_1,
\end{align}
where $A_1=\displaystyle \sum_{z,x_i \in\mathbb{F}_q^{\times}}\theta(zf(x_1,x_2))$ and $B_1=\displaystyle \sum_{z\in\mathbb{F}_q^{\times}}\sum_{\begin{subarray}{1} \ \text{some } \\ \  x_i =0\end{subarray}} \theta(zf(x_1,x_2))$. 
Using Lemma \ref{lemma2_02} and the orthogonality relation \eqref{eq-2}, we calculated $B_1$ in \cite{SB} and found that $B_1=1-q$. Combining \eqref{neweqn-02} and \eqref{eq-19}, we have
	\begin{align*}
		\#D_\lambda^{d,k}&=\frac{N_q^d(\lambda)-1}{q-1}=\frac{q-1+\frac{A_1}{q}-\frac{q-1}{q}}{q-1}\\
		&=\frac{q-1}{q}+\frac{A_1}{q(q-1)}.
	\end{align*}
In \cite{SB}, using Lemma \ref{lemma2_02} and the condition that $\gcd(d,k)=1$, we simplified the character sum $A_1$ and found that 
	\begin{align*}
	A_1&=\sum_{a=0}^{q-2}g(T^{-ka})g(T^{-(d-k)a})g(T^{da})T^{-da}(-d\lambda).
\end{align*}
For $\alpha=(-1)^{d+k}(\lambda d)^{-d}$, we observe that $A=\frac{A_1}{q}$. Hence,
	\begin{align*}
		\#D_\lambda^{d,k}&=\frac{q-1}{q}+\frac{A}{q-1}.
	\end{align*} 
	Using \eqref{eqn-1.8} and the fact that $r_q(\lambda)=n_q(\alpha)$, we complete the proof.
\end{proof}
Next, we prove Theorem \ref{MT-1}.
\begin{proof}[Proof of Theorem \ref{MT-1}]
We first recall that for $\alpha\in\mathbb{F}_q^\times$, we have
	\begin{align*}
	C(d,k,\alpha)=\sum_{\chi\in\widehat{\mathbb{F}_q^{\times}}}\frac{g(\chi^d)g(\overline{\chi}^{d-k})\chi(\alpha)}{g(\chi^k)}.
\end{align*}
We now write $C(d,k,\alpha)$ in terms of $p$-adic hypergeometric functions. From \eqref{eqn-1.0}, we have
	\begin{align}\label{neweqn-01}
	C(d,k,\alpha)=A+\frac{1-q}{q}\sum_{\begin{subarray}{1} \ \chi\in\widehat{\mathbb{F}_q^{\times}}, \\ \chi^{k_1}= \varepsilon \end{subarray}}g(\chi^d)g(\overline{\chi}^{d})\chi(\alpha),
\end{align}
where $A=\frac{1}{q}\sum_{\chi\in\widehat{\mathbb{F}_q^{\times}}}g(\chi^d)g(\overline{\chi}^{d-k})g(\overline{\chi}^k)\chi((-1)^k\alpha)$. Now, we simplify the expression for $A$. We have
\begin{align*}
	A=\frac{1}{q}\sum_{\chi\in\widehat{\mathbb{F}_q^{\times}}}g(\chi^d)g(\overline{\chi}^{d-k})g(\overline{\chi}^k)\chi((-1)^k\alpha).
\end{align*}
Replacing $\chi$ with $\omega^a$ and then applying Gross-Koblitz formula, we obtain
\begin{align*}
	A&=\frac{1}{q}\sum_{a=0}^{q-2}g(\overline{\omega}^{-ad})g(\overline{\omega}^{a(d-k)})g(\overline{\omega}^{ak})\overline{\omega}^a((-1)^k\alpha^{-1})\\
	&=-\frac{1}{q}\sum_{a=0}^{q-2}(-p)^{\sum_{i=0}^{r-1}\left(\left\langle\frac{-adp^i}{q-1}\right\rangle+\left\langle\frac{a(d-k)p^i}{q-1}\right\rangle+\left\langle\frac{akp^i}{q-1}\right\rangle\right)}\overline{\omega}^a((-1)^k\alpha^{-1})\\
	&\times\prod_{i=0}^{r-1}\Gamma_p\left(\left\langle\frac{-adp^i}{q-1}\right\rangle\right)\Gamma_p\left(\left\langle\frac{akp^i}{q-1}\right\rangle\right)\Gamma_p\left(\left\langle\frac{a(d-k)p^i}{q-1}\right\rangle\right).
\end{align*}
Applying Lemma \ref{lemma-3_1} with $t=d$ and Lemma \ref{lemma-3_2} with $t=k$ and $t=d-k$, we deduce that
\begin{align*}
	A&=-\frac{1}{q}\sum_{a=0}^{q-2}(-p)^{\sum_{i=0}^{r-1}\left(-\left\lfloor\frac{-adp^i}{q-1}\right\rfloor-\left\lfloor\frac{akp^i}{q-1}\right\rfloor-\left\lfloor\frac{a(d-k)p^i}{q-1}\right\rfloor\right)}\overline{\omega}^a((-1)^kd^{-d}\alpha^{-1}k^k(d-k)^{d-k})\\
	&\times\prod\limits_{i=0}^{r-1}\prod\limits_{h=0}^{k-1}\frac{\Gamma_p\left(\left\langle\frac{p^i h}{k}+\frac{p^ia}{q-1}\right\rangle \right)}{\Gamma_p\left(\left\langle\frac{p^i h}{k}\right\rangle \right)}\prod\limits_{h=0}^{d-k-1}\frac{\Gamma_p\left(\left\langle\frac{p^i h}{d-k}+\frac{p^ia}{q-1}\right\rangle \right)}{\Gamma_p\left(\left\langle\frac{p^i h}{d-k}\right\rangle \right)}\prod\limits_{h=0}^{d-1}\frac{\Gamma_p\left(\left\langle\frac{p^i(1+h)}{d}-\frac{p^ia}{q-1}\right\rangle \right)}{\Gamma_p\left(\left\langle\frac{p^i h}{d}\right\rangle \right)}\\
	&=-\frac{1}{q}-\frac{1}{q}\sum_{a=1}^{q-2}(-p)^{\sum_{i=0}^{r-1}\left(-\left\lfloor\frac{-adp^i}{q-1}\right\rfloor-\left\lfloor\frac{akp^i}{q-1}\right\rfloor-\left\lfloor\frac{a(d-k)p^i}{q-1}\right\rfloor\right)}\overline{\omega}^a((-1)^dx)\\
	&\times\prod\limits_{i=0}^{r-1}\prod\limits_{h=0}^{k-1}\frac{\Gamma_p\left(\left\langle\frac{p^i h}{k}+\frac{p^ia}{q-1}\right\rangle \right)}{\Gamma_p\left(\left\langle\frac{p^i h}{k}\right\rangle \right)}\prod\limits_{h=0}^{d-k-1}\frac{\Gamma_p\left(\left\langle\frac{p^i h}{d-k}+\frac{p^ia}{q-1}\right\rangle \right)}{\Gamma_p\left(\left\langle\frac{p^i h}{d-k}\right\rangle \right)}\prod\limits_{h=0}^{d-1}\frac{\Gamma_p\left(\left\langle\frac{p^i(1+h)}{d}-\frac{p^ia}{q-1}\right\rangle \right)}{\Gamma_p\left(\left\langle\frac{p^i h}{d}\right\rangle \right)},
\end{align*}
where $x=\frac{(-1)^{d+k}k^k(d-k)^{d-k}}{\alpha d^d}$. Using Lemma \ref{lemma-3_3}, and Lemma \ref{lemma-3_4} with $l=k$ and $l=d-k$ yield
\begin{align}\label{eqn-1.1}
A&=-\frac{1}{q}-\frac{1}{q}\sum_{a=1}^{q-2}\overline{\omega}^a((-1)^dx)(-p)^{\sum_{i=0}^{r-1} v_{a,i}+1}\prod\limits_{i=0}^{r-1}\Gamma_p\left(\left\langle\frac{p^ia}{q-1}\right\rangle \right)\Gamma_p\left(\left\langle p^i-\frac{p^ia}{q-1}\right\rangle \right)\nonumber\\
&\times\prod\limits_{h=0}^{k-1}\frac{\Gamma_p\left(\left\langle\frac{p^i h}{k}+\frac{p^ia}{q-1}\right\rangle \right)}{\Gamma_p\left(\left\langle\frac{p^i h}{k}\right\rangle \right)}\prod\limits_{h=1}^{d-k-1}\frac{\Gamma_p\left(\left\langle\frac{p^i h}{d-k}+\frac{p^ia}{q-1}\right\rangle \right)}{\Gamma_p\left(\left\langle\frac{p^i h}{d-k}\right\rangle \right)}\prod\limits_{h=1}^{d-1}\frac{\Gamma_p\left(\left\langle\frac{p^i h}{d}-\frac{p^ia}{q-1}\right\rangle \right)}{\Gamma_p\left(\left\langle\frac{p^i h}{d}\right\rangle \right)},
\end{align}
where $$v_{a,i}=-\sum_{h=1}^{d-1}\left\lfloor\left\langle\frac{hp^i}{d}\right\rangle-\frac{ap^i}{q-1}\right\rfloor-\sum_{h=0}^{k-1}\left\lfloor\left\langle\frac{-hp^i}{k}\right\rangle+\frac{ap^i}{q-1}\right\rfloor-\sum_{h=1}^{d-k-1}\left\lfloor\left\langle\frac{-hp^i}{d-k}\right\rangle+\frac{ap^i}{q-1}\right\rfloor.$$
For a positive integer $l$, one can easily check that the following equality holds
\begin{align}\label{eqn-1.2}
\prod\limits_{h=0}^{l-1}\frac{\Gamma_p\left(\left\langle\frac{p^i h}{l}+\frac{p^ia}{q-1}\right\rangle \right)}{\Gamma_p\left(\left\langle\frac{p^i h}{l}\right\rangle \right)}=\prod\limits_{h=0}^{l-1}\frac{\Gamma_p\left(\left\langle\frac{-p^i h}{l}+\frac{p^ia}{q-1}\right\rangle \right)}{\Gamma_p\left(\left\langle\frac{-p^i h}{l}\right\rangle \right)}.
\end{align}
Substituting \eqref{eqn-1.2} with $l=k$ and $l=d-k$ in \eqref{eqn-1.1} and using \eqref{eq-12}, we deduce that  
\begin{align*}
A&=-\frac{1}{q}-\frac{(-p)^r}{q}\sum_{a=1}^{q-2}\overline{\omega}^a((-1)^d(-x))(-p)^{\sum_{i=0}^{r-1} v_{a,i}}(-1)^r\prod\limits_{i=0}^{r-1}\prod\limits_{h=0}^{k-1}\frac{\Gamma_p\left(\left\langle\frac{-p^i h}{k}+\frac{p^ia}{q-1}\right\rangle \right)}{\Gamma_p\left(\left\langle\frac{-p^i h}{k}\right\rangle \right)} \\
&\hspace{.4cm}\times\prod\limits_{h=1}^{d-k-1}\frac{\Gamma_p\left(\left\langle\frac{-p^i h}{d-k}+\frac{p^ia}{q-1}\right\rangle \right)}{\Gamma_p\left(\left\langle\frac{-p^i h}{d-k}\right\rangle \right)}\prod\limits_{h=1}^{d-1}\frac{\Gamma_p\left(\left\langle\frac{p^i h}{d}-\frac{p^ia}{q-1}\right\rangle \right)}{\Gamma_p\left(\left\langle\frac{p^i h}{d}\right\rangle \right)}.
\end{align*}
Adding and subtracting the term under summation for $a=0$ and using $\overline{\omega}^a(-1)=(-1)^a$, we obtain
\begin{align*}
A&=\frac{q-1}{q}-\sum_{a=0}^{q-2}(-1)^{a(d-1)}\overline{\omega}^a(x)(-p)^{\sum_{i=0}^{r-1} v_{a,i}}\prod\limits_{i=0}^{r-1}\prod\limits_{h=0}^{k-1}\frac{\Gamma_p\left(\left\langle\frac{-p^i h}{k}+\frac{p^ia}{q-1}\right\rangle \right)}{\Gamma_p\left(\left\langle\frac{-p^i h}{k}\right\rangle \right)} \\
&\hspace{.4cm}\times\prod\limits_{h=1}^{d-k-1}\frac{\Gamma_p\left(\left\langle\frac{-p^i h}{d-k}+\frac{p^ia}{q-1}\right\rangle \right)}{\Gamma_p\left(\left\langle\frac{-p^i h}{d-k}\right\rangle \right)}\prod\limits_{h=1}^{d-1}\frac{\Gamma_p\left(\left\langle\frac{p^i h}{d}-\frac{p^ia}{q-1}\right\rangle \right)}{\Gamma_p\left(\left\langle\frac{p^i h}{d}\right\rangle \right)}\\
&=\frac{q-1}{q}+(q-1)\cdot{_{d-1}}G_{d-1}\left[\begin{array}{ccccccc}
			\frac{1}{d}, & \frac{2}{d}, & \ldots, & \frac{k}{d}, & \frac{k+1}{d}, & \ldots , & \frac{d-1}{d} \vspace{0.1cm}\\
		   	0,& \frac{1}{k}, & \ldots, & \frac{k-1}{k}, & \frac{1}{d-k},  & \ldots , & \frac{d-k-1}{d-k} 
		\end{array}|x
		\right]_q.
\end{align*}
Substituting the expression for $A$ in \eqref{neweqn-01} and then using Lemma \ref{lemma2_1} yield
\begin{align}\label{eqn-1.4}
C(d,k,\alpha)&=\frac{q-1}{q}+(1-q)\sum_{\begin{subarray}{1} \ \chi\in\widehat{\mathbb{F}_q^{\times}}, \\ \chi^{k_1}= \varepsilon \end{subarray}}\chi((-1)^d\alpha)-\frac{1-q}{q}\sum_{\begin{subarray}{1} \ \chi\in\widehat{\mathbb{F}_q^{\times}}, \\ \chi^{k_1}= \varepsilon \end{subarray}}(q-1)\chi(\alpha)\delta(\chi^d) \notag\\
&+(q-1)\cdot{_{d-1}}G_{d-1}\left[\begin{array}{ccccccc}
			\frac{1}{d}, & \frac{2}{d}, & \ldots, & \frac{k}{d}, & \frac{k+1}{d}, & \ldots , & \frac{d-1}{d} \vspace{0.1cm}\\
			0,& \frac{1}{k}, & \ldots, & \frac{k-1}{k}, & \frac{1}{d-k},  & \ldots , & \frac{d-k-1}{d-k}  
		\end{array}|x
		\right]_q.
\end{align}
From \eqref{eqn-1.5}, we have 
\begin{align}\label{neweqn-1.5}
	C(d,k,\alpha)=(q-1)n_q(\alpha)-(q-1)\sum_{\begin{subarray}{1} \ \chi\in\widehat{\mathbb{F}_q^{\times}}, \\ \chi^{k_1}= \varepsilon \end{subarray}}\chi((-1)^d\alpha),
\end{align}
where $n_q(\alpha)$ is the number of distinct zeros of the polynomial $h_\alpha(y)$ in $\mathbb{F}_q$. Taking $\alpha=(-1)^{d-k}(\lambda d)^{-d}$, we have $n_q(\alpha)=r_q(\lambda)$. Combining \eqref{eqn-1.4} and \eqref{neweqn-1.5}, and taking $\alpha=(-1)^{d-k}(\lambda d)^{-d}$, we obtain the required expression for $r_q(\lambda)$.
\end{proof}
\begin{remark}
If we combine \eqref{eqn-1.4} and \eqref{neweqn-1.5}, and take $d=3n$, $k=2$, and $r=1$, then we obtain \cite[Theorem 1.2]{NS1}.
\end{remark}
\section{Proof of Theorems \ref{MT-3} and \ref{MT-4}}
In this section, we prove the summation identites given in Theorems \ref{MT-3} and \ref{MT-4}. To prove these identities, we need two lemmas which connect the product of certain values of the $p$-adic gamma function to some character sums. 
\begin{lemma}\label{lemma4.1}
	For $0 \leq a\leq q-2$, we have
	\begin{align*}
		\prod\limits_{i=0}^{r-1} \frac{ \Gamma_p\left(\left\langle p^i-\frac{ap^i}{q-1}\right\rangle\right) \Gamma_p\left(\left\langle \frac{p^i}{2}+\frac{ap^i}{q-1}\right\rangle\right)}{ (-p)^{\left\lfloor \frac{1}{2}+\frac{ap^i}{q-1}\right\rfloor}\Gamma_p\left(\left\langle \frac{p^i}{2}\right\rangle\right)}=-\sum_{t\in\mathbb{F}_q}\varphi(t(t-1))\overline{\omega}^a(-t) \prod\limits_{i=0}^{r-1} (-p)^{\left\lfloor \frac{-ap^i}{q-1}\right\rfloor}.
	\end{align*}
\end{lemma}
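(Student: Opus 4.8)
The plan is to rewrite the character sum on the right as a Jacobi sum, convert it to Gauss sums, and then match it against the left-hand side through the Gross--Koblitz formula (Theorem \ref{thm2_3}) together with the reflection identity \eqref{eq-12}. First I would factor $\varphi(t(t-1))=\varphi(t)\varphi(t-1)$ and $\overline{\omega}^a(-t)=\overline{\omega}^a(-1)\overline{\omega}^a(t)$, and use $\varphi(t-1)=\varphi(-1)\varphi(1-t)$, to obtain
\[
\sum_{t\in\mathbb{F}_q}\varphi(t(t-1))\overline{\omega}^a(-t)=\overline{\omega}^a(-1)\varphi(-1)\sum_{t\in\mathbb{F}_q}(\varphi\overline{\omega}^a)(t)\varphi(1-t)=\overline{\omega}^a(-1)\varphi(-1)J(\varphi\overline{\omega}^a,\varphi).
\]
Since $\varphi^2=\varepsilon$, Lemma \ref{lemma2_2} gives $J(\varphi\overline{\omega}^a,\varphi)=g(\varphi\overline{\omega}^a)g(\varphi)/g(\overline{\omega}^a)$ for $a\neq 0$, because the $\delta$-term $\delta(\varphi^2\overline{\omega}^a)=\delta(\overline{\omega}^a)$ vanishes. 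Thus the right-hand side of the lemma becomes a product of three Gauss sums together with a power of $(-p)$.

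Next I would apply the Gross--Koblitz formula. Writing $\varphi=\overline{\omega}^{(q-1)/2}$ and $\varphi\overline{\omega}^a=\overline{\omega}^{a+(q-1)/2}$, and using that $p$ is odd so that $\langle p^i/2\rangle=\tfrac12$ and $\langle(a+\tfrac{q-1}{2})p^i/(q-1)\rangle=\langle \tfrac{p^i}{2}+\tfrac{ap^i}{q-1}\rangle$, Theorem \ref{thm2_3} produces exactly the gamma factors $\Gamma_p(\langle\tfrac{p^i}{2}+\tfrac{ap^i}{q-1}\rangle)$, $\Gamma_p(\langle\tfrac{p^i}{2}\rangle)$, and $\Gamma_p(\langle\tfrac{ap^i}{q-1}\rangle)$ appearing in the statement. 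To create the remaining factor $\Gamma_p(\langle p^i-\tfrac{ap^i}{q-1}\rangle)$ I would eliminate the inverse gamma factor coming from $g(\overline{\omega}^a)^{-1}$ by means of \eqref{eq-12}, which also contributes the constant $(-1)^r\overline{\omega}^a(-1)$.

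It then remains to reconcile the powers of $\pi$ with the powers of $(-p)$. Using $\pi^{p-1}=-p$, the $\pi$-exponent collapses to $(-p)^{r-M}$ with $M=\sum_{i}\lfloor\tfrac12+\langle\tfrac{ap^i}{q-1}\rangle\rfloor$; this rests on the half-integer identity $\langle\tfrac{p^i}{2}+\tfrac{ap^i}{q-1}\rangle-\langle\tfrac{ap^i}{q-1}\rangle=\tfrac12-\lfloor\tfrac12+\langle\tfrac{ap^i}{q-1}\rangle\rfloor$. For $a\neq 0$ one has $\tfrac{ap^i}{q-1}\notin\mathbb{Z}$, so $\lfloor\tfrac12+\tfrac{ap^i}{q-1}\rfloor+\lfloor\tfrac{-ap^i}{q-1}\rfloor=\lfloor\tfrac12+\langle\tfrac{ap^i}{q-1}\rangle\rfloor-1$, which makes the $(-p)$-powers in the statement contribute $(-p)^{M-r}$ and cancel the $\pi$-contribution exactly. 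The last ingredient is the scalar identity $\Gamma_p(1/2)^{2r}=(-1)^r\varphi(-1)$, which I would derive by comparing $g(\varphi)^2=q\,\varphi(-1)$ (Lemma \ref{lemma2_1}, valid since $\varphi=\overline{\varphi}\neq\varepsilon$) with the Gross--Koblitz value $g(\varphi)=-\pi^{(p-1)r/2}\Gamma_p(1/2)^r$, giving $(-p)^r\Gamma_p(1/2)^{2r}=p^r\varphi(-1)$. Substituting everything back, all $\pi$- and $(-p)$-factors cancel and the two sides agree.

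The case $a=0$ I would dispose of separately and quickly: the left-hand side collapses to $1$ because $\Gamma_p(\langle p^i\rangle)=1$ and $(-p)^{\lfloor 1/2\rfloor}=1$, while $\sum_t\varphi(t(t-1))=\sum_t\varphi(t^2-t)=-1$, so the right-hand side equals $-(-1)=1$ as well. The main obstacle is the bookkeeping in the previous paragraph: tracking the half-integer fractional parts and the floor terms so that the $\pi$-powers (rewritten via $\pi^{p-1}=-p$) exactly cancel the $(-p)$-powers built into the statement, after which the sign identity for $\Gamma_p(1/2)^{2r}$ finishes the argument. All the other steps are direct applications of the quoted lemmas.
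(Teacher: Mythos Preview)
Your proof is correct and uses the same core ingredients as the paper: rewrite the character sum as a Jacobi sum, pass to Gauss sums, and apply Gross--Koblitz. The paper runs the argument in the opposite direction and with one simplification worth noting: it recognizes $\prod_i\Gamma_p\bigl(\langle p^i-\tfrac{ap^i}{q-1}\rangle\bigr)$ directly as the Gross--Koblitz factor of $g(\omega^a)$ (since $\langle p^i-\tfrac{ap^i}{q-1}\rangle=\langle\tfrac{-ap^i}{q-1}\rangle$), so the left-hand side becomes $-g(\varphi\overline{\omega}^a)g(\omega^a)/g(\varphi)=-J(\varphi\overline{\omega}^a,\omega^a)$ via Lemma~\ref{lemma2_2}, where the $\delta$-term vanishes because $\delta(\varphi)=0$ for every $a$. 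This sidesteps your use of the reflection identity \eqref{eq-12}, the auxiliary computation of $\Gamma_p(1/2)^{2r}$, and the separate treatment of $a=0$; otherwise the two arguments coincide.
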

\begin{proof}
We have 
	\begin{align*}
		&	\prod\limits_{i=0}^{r-1} \frac{ \Gamma_p\left(\left\langle p^i-\frac{ap^i}{q-1}\right\rangle\right) \Gamma_p\left(\left\langle \frac{p^i}{2}+\frac{ap^i}{q-1}\right\rangle\right)}{ (-p)^{\left\lfloor \frac{1}{2}+\frac{ap^i}{q-1}\right\rfloor}\Gamma_p\left(\left\langle \frac{p^i}{2}\right\rangle\right)}\\
		&\hspace*{1cm}=\prod\limits_{i=0}^{r-1} \frac{(-p)^{\left\langle \frac{1}{2}+\frac{ap^i}{q-1}\right\rangle} \Gamma_p\left(\left\langle p^i-\frac{ap^i}{q-1}\right\rangle\right) \Gamma_p\left(\left\langle \frac{p^i}{2}+\frac{ap^i}{q-1}\right\rangle\right)}{(-p)^{\frac{1}{2}+\frac{ap^i}{q-1}} \Gamma_p\left(\left\langle \frac{p^i}{2}\right\rangle\right)}\\
			&\hspace*{1cm}=\prod\limits_{i=0}^{r-1} \frac{(-p)^{\left\langle \frac{p^i}{2}+\frac{ap^i}{q-1}\right\rangle} \Gamma_p\left(\left\langle p^i-\frac{ap^i}{q-1}\right\rangle\right) \Gamma_p\left(\left\langle \frac{p^i}{2}+\frac{ap^i}{q-1}\right\rangle\right)}{(-p)^{\left\langle\frac{p^i}{2}\right\rangle}(-p)^{\frac{ap^i}{q-1}} \Gamma_p\left(\left\langle \frac{p^i}{2}\right\rangle\right)}\\
			&\hspace*{1cm}=\prod\limits_{i=0}^{r-1} \frac{(-p)^{\left\langle \frac{p^i}{2}+\frac{ap^i}{q-1}\right\rangle}(-p)^{\left\langle-\frac{ap^i}{q-1}\right\rangle} \Gamma_p\left(\left\langle p^i-\frac{ap^i}{q-1}\right\rangle\right) \Gamma_p\left(\left\langle \frac{p^i}{2}+\frac{ap^i}{q-1}\right\rangle\right)}{(-p)^{\left\langle\frac{p^i}{2}\right\rangle}(-p)^{\frac{ap^i}{q-1}+\left\langle-\frac{ap^i}{q-1}\right\rangle} \Gamma_p\left(\left\langle \frac{p^i}{2}\right\rangle\right)}.
	\end{align*}
Using Gross-Koblitz formula, we obtain
\begin{align}\label{eq-22}
	\prod\limits_{i=0}^{r-1} \frac{ \Gamma_p\left(\left\langle p^i-\frac{ap^i}{q-1}\right\rangle\right) \Gamma_p\left(\left\langle \frac{p^i}{2}+\frac{ap^i}{q-1}\right\rangle\right)}{ (-p)^{\left\lfloor \frac{1}{2}+\frac{ap^i}{q-1}\right\rfloor}\Gamma_p\left(\left\langle \frac{p^i}{2}\right\rangle\right)}=\frac{-g(\varphi \overline{\omega}^a) g(\omega^a)}{g(\varphi)}\prod_{i=0}^{r-1}(-p)^{\left\lfloor \frac{-ap^i}{q-1}\right\rfloor} .
\end{align}
Lemma \ref{lemma2_2} and \eqref{eq-1} yield
\begin{align}\label{eq-23}
	\frac{-g(\varphi \overline{\omega}^a) g(\omega^a)}{g(\varphi)}&=-J(\varphi\overline{\omega}^a, \omega^a)\nonumber\\
	&=-\varphi\overline{\omega}^a(-1) J(\varphi\overline{\omega}^a,\varphi)\nonumber\\
	&=-\varphi\overline{\omega}^a(-1)\sum_{t\in\mathbb{F}_q}\varphi(1-t)\varphi\overline{\omega}^a(t)\nonumber\\
	&=-\sum_{t\in\mathbb{F}_q}\varphi(t(t-1))\overline{\omega}^a(-t).
\end{align}
Combining \eqref{eq-22} and \eqref{eq-23}, we obtain the required result.
\end{proof}
\begin{lemma}\label{lemma4.2}
	For $0\leq a\leq q-2$, we have
	\begin{align*}
		\prod\limits_{i=0}^{r-1} \frac{ \Gamma_p\left(\left\langle \frac{ap^i}{q-1}\right\rangle\right) \Gamma_p\left(\left\langle \frac{p^i}{2}-\frac{ap^i}{q-1}\right\rangle\right)}{ (-p)^{\left\lfloor \frac{1}{2}-\frac{ap^i}{q-1}\right\rfloor}\Gamma_p\left(\left\langle \frac{p^i}{2}\right\rangle\right)}=-\sum_{t\in\mathbb{F}_q}\varphi(t(t-1))\omega^a(-t) \prod\limits_{i=0}^{r-1} (-p)^{\left\lfloor \frac{ap^i}{q-1}\right\rfloor}.
	\end{align*}
\end{lemma}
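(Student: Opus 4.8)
The plan is to follow the proof of Lemma \ref{lemma4.1} step for step, since the statement of Lemma \ref{lemma4.2} is exactly its reflection under $a\mapsto q-1-a$: replacing $a$ by $q-1-a$ sends $\frac{ap^i}{q-1}$ to $p^i-\frac{ap^i}{q-1}$, which turns $\langle p^i-\frac{ap^i}{q-1}\rangle$ into $\langle\frac{ap^i}{q-1}\rangle$, turns $\langle\frac{p^i}{2}+\frac{ap^i}{q-1}\rangle$ into $\langle\frac{p^i}{2}-\frac{ap^i}{q-1}\rangle$, and turns $\overline{\omega}^a(-t)$ into $\omega^a(-t)$. I would nonetheless carry out the computation directly so as to keep track of all signs and powers of $(-p)$. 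The target intermediate object is the Gauss-sum quotient $\tfrac{-g(\varphi\omega^a)g(\overline{\omega}^a)}{g(\varphi)}$, the conjugate of the one appearing in Lemma \ref{lemma4.1}.

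First I would convert the floor in the denominator into a fractional part via $\lfloor x\rfloor=x-\langle x\rangle$, so that $(-p)^{-\lfloor\frac12-\frac{ap^i}{q-1}\rfloor}=(-p)^{\langle\frac12-\frac{ap^i}{q-1}\rangle}(-p)^{-(\frac12-\frac{ap^i}{q-1})}$. Since $p$ is odd, $\frac{p^i-1}{2}\in\mathbb{Z}$, whence $\langle\frac{p^i}{2}\rangle=\frac12$ and $\langle\frac{p^i}{2}-\frac{ap^i}{q-1}\rangle=\langle\frac12-\frac{ap^i}{q-1}\rangle$; I would use these to match every exponent of $(-p)$ and every $\Gamma_p$-argument to the data produced by the Gross--Koblitz formula (Theorem \ref{thm2_3}) applied to $g(\overline{\omega}^a)$, $g(\varphi\omega^a)$ and $g(\varphi)$. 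Here one uses $\varphi\omega^a=\overline{\omega}^{(q-1)/2-a}$ and $\varphi=\overline{\omega}^{(q-1)/2}$, so the three arguments $\langle\frac{ap^i}{q-1}\rangle$, $\langle\frac{p^i}{2}-\frac{ap^i}{q-1}\rangle$, $\langle\frac{p^i}{2}\rangle$ correspond precisely to these three Gauss sums. The expected outcome of this step is
\[
\prod_{i=0}^{r-1}\frac{\Gamma_p(\langle\frac{ap^i}{q-1}\rangle)\Gamma_p(\langle\frac{p^i}{2}-\frac{ap^i}{q-1}\rangle)}{(-p)^{\lfloor\frac12-\frac{ap^i}{q-1}\rfloor}\Gamma_p(\langle\frac{p^i}{2}\rangle)}=\frac{-g(\varphi\omega^a)g(\overline{\omega}^a)}{g(\varphi)}\prod_{i=0}^{r-1}(-p)^{\lfloor\frac{ap^i}{q-1}\rfloor}.
\]

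The bookkeeping of the powers of $(-p)$ is the only genuinely computational point, and the place where I expect to have to be careful. After Gross--Koblitz the surviving exponent is $E=\sum_i\big(\langle\frac{ap^i}{q-1}\rangle+\langle\frac12-\frac{ap^i}{q-1}\rangle-\frac12\big)$, and writing each $\langle x\rangle=x-\lfloor x\rfloor$ collapses this to $E=-\sum_i\lfloor\frac{ap^i}{q-1}\rfloor-\sum_i\lfloor\frac12-\frac{ap^i}{q-1}\rfloor$, which is exactly the cancellation needed for the residual $(-p)$-powers (together with the $(-p)^{\langle\frac12-\frac{ap^i}{q-1}\rangle}$ introduced above) to combine into $\prod_i(-p)^{\lfloor\frac{ap^i}{q-1}\rfloor}$. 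This mirrors the corresponding cancellation in Lemma \ref{lemma4.1}.

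Finally, to convert the Gauss-sum quotient into the stated character sum, I would apply the Jacobi--Gauss relation (Lemma \ref{lemma2_2}) with $A=\varphi\omega^a$ and $B=\overline{\omega}^a$; since $AB=\varphi\neq\varepsilon$, the $\delta(\varphi)$-term vanishes and $\tfrac{g(\varphi\omega^a)g(\overline{\omega}^a)}{g(\varphi)}=J(\varphi\omega^a,\overline{\omega}^a)$. Then \eqref{eq-1} gives $J(\varphi\omega^a,\overline{\omega}^a)=\varphi\omega^a(-1)J(\varphi\omega^a,\varphi)=\varphi\omega^a(-1)\sum_{t\in\mathbb{F}_q}\varphi(1-t)\varphi\omega^a(t)$, and absorbing the sign using $\varphi(t(t-1))=\varphi(-1)\varphi(t(1-t))$ and $\omega^a(-t)=\omega^a(-1)\omega^a(t)$ rewrites this as $\sum_{t\in\mathbb{F}_q}\varphi(t(t-1))\omega^a(-t)$. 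Substituting this into the displayed identity yields the claimed formula.
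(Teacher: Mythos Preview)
Your proposal is correct and follows essentially the same approach as the paper, which simply says ``The proof is similar to that of Lemma \ref{lemma4.1}.'' Your observation that Lemma \ref{lemma4.2} is the image of Lemma \ref{lemma4.1} under $a\mapsto q-1-a$ is a nice way to see why the argument transfers, and the direct Gross--Koblitz/Jacobi-sum computation you outline is exactly the intended ``similar'' proof.
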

\begin{proof}
	The proof is similar to that of Lemma \ref{lemma4.1}.
\end{proof}
Having Lemma \ref{lemma4.1} and Lemma \ref{lemma4.2} showed, we are ready to prove Theorem \ref{MT-3}.
\begin{proof}[Proof of Theorem \ref{MT-3}]
	For $x\in\mathbb{F}_q^\times$, we consider
	\begin{align}\label{eq-25}
		A_x&:= q\cdot{_{d-1}}G_{d-1}\left[\begin{array}{ccccccc}
		\frac{1}{d}, &\frac{2}{d}, & \ldots, & \frac{k}{d}, & \frac{k+1}{d}, & \ldots , & \frac{d-1}{d} \\
		0, &\frac{1}{k}, & \ldots, & \frac{k-1}{k}, & \frac{1}{d-k}, & \ldots , & \frac{d-k-1}{d-k} 
	\end{array}|x
	\right]_q\notag\\
	&=-\frac{q}{q-1} -\frac{q}{q-1}\sum_{a=1}^{q-2}\overline{\omega}^{a}(x)(-p)^{\sum\limits_{i=0}^{r-1} v_{a,i}}\prod_{i=0}^{r-1}\prod\limits_{h=1}^{d-1}\frac{\Gamma_p\left(\left\langle\frac{p^ih}{d}-\frac{p^ia}{q-1}\right\rangle \right)}{\Gamma_p\left(\left\langle \frac{hp^i}{d}\right\rangle\right)}\notag\\
	&\hspace*{.5cm}\times\prod\limits_{h=0}^{k-1}\frac{\Gamma_p\left(\left\langle\frac{-p^i h}{k}+\frac{p^ia}{q-1}\right\rangle \right)}{\Gamma_p\left(\left\langle \frac{-hp^i}{k}\right\rangle\right)}\prod\limits_{h=1}^{d-k-1}\frac{\Gamma_p\left(\left\langle\frac{-p^i h}{d-k}+\frac{p^ia}{q-1}\right\rangle \right)}{\Gamma_p\left(\left\langle \frac{-hp^i}{d-k}\right\rangle\right)},
\end{align}
where $v_{a,i}=-\sum\limits_{h=1}^{d-1} \left\lfloor\left\langle\frac{hp^i}{d}\right\rangle -\frac{ap^i}{q-1}\right\rfloor-\sum\limits_{h=0}^{k-1} \left\lfloor\left\langle\frac{-hp^i}{k}\right\rangle+\frac{ap^i}{q-1}\right\rfloor-\sum\limits_{h=1}^{d-k-1} \left\lfloor\left\langle\frac{-hp^i}{d-k}\right\rangle+\frac{ap^i}{q-1}\right\rfloor.$
Since $d$ and $k$ are odd integers, so $d-k$ is even. Now, using \eqref{eq-12} and the fact that $\frac{d-k}{2}\in \mathbb{Z}$, we rewrite \eqref{eq-25} as
\begin{align*}
		A_x&= -\frac{(-1)^r q}{q-1}\sum_{a=1}^{q-2}\overline{\omega}^{a}(-x)(-p)^{\sum\limits_{i=0}^{r-1} T_{a,i}}\prod_{i=0}^{r-1}\frac{\Gamma_p\left(\left\langle p^i-\frac{p^ia}{q-1}\right\rangle \right)\Gamma_p\left(\left\langle\frac{p^i}{2}+\frac{p^ia}{q-1}\right\rangle \right)}{(-p)^{\left\lfloor \frac{1}{2}+\frac{ap^i}{q-1}\right\rfloor}\Gamma_p\left(\left\langle\frac{p^i}{2}\right\rangle \right)}\\
	&\hspace*{.5cm}\times\Gamma_p^2\left(\left\langle\frac{p^ia}{q-1}\right\rangle \right)\prod\limits_{h=1}^{d-1}\frac{\Gamma_p\left(\left\langle\frac{p^ih}{d}-\frac{p^ia}{q-1}\right\rangle \right)}{\Gamma_p\left(\left\langle \frac{hp^i}{d}\right\rangle\right)}\prod\limits_{h=1}^{k-1}\frac{\Gamma_p\left(\left\langle\frac{-p^i h}{k}+\frac{p^ia}{q-1}\right\rangle \right)}{\Gamma_p\left(\left\langle \frac{-hp^i}{k}\right\rangle\right)}\\
	&\hspace*{.5cm}\times\prod\limits_{\begin{subarray}{1} \ h=1 \\ h\neq\frac{d-k}{2}\end{subarray}}^{d-k-1}\frac{\Gamma_p\left(\left\langle\frac{-p^i h}{d-k}+\frac{p^ia}{q-1}\right\rangle \right)}{\Gamma_p\left(\left\langle \frac{-hp^i}{d-k}\right\rangle\right)}-\frac{q}{q-1},
\end{align*}
where $T_{a,i}=-\sum\limits_{h=1}^{d-1} \left\lfloor\left\langle\frac{hp^i}{d}\right\rangle -\frac{ap^i}{q-1}\right\rfloor-\sum\limits_{h=0}^{k-1} \left\lfloor\left\langle\frac{-hp^i}{k}\right\rangle+\frac{ap^i}{q-1}\right\rfloor-\sum\limits_{\begin{subarray}{1} \ h=0 \\ h\neq\frac{d-k}{2}\end{subarray}}^{d-k-1} \left\lfloor\left\langle\frac{-hp^i}{d-k}\right\rangle+\frac{ap^i}{q-1}\right\rfloor+\left\lfloor \frac{ap^i}{q-1}\right\rfloor.$ Lemma \ref{lemma4.1} yields
\begin{align*}
		A_x&=-\frac{q}{q-1} +\frac{(-1)^rq}{q-1}\sum_{a=1}^{q-2}\overline{\omega}^{a}(-x)(-p)^{\sum\limits_{i=0}^{r-1} T_{a,i}}\sum_{t\in\mathbb{F}_q}\varphi(t(t-1))\overline{\omega}^a(-t) \prod_{i=0}^{r-1}(-p)^{\left\lfloor \frac{-ap^i}{q-1}\right\rfloor}\\
	&\hspace*{.5cm}\times\Gamma_p^2\left(\left\langle\frac{p^ia}{q-1}\right\rangle \right)\prod\limits_{h=1}^{d-1}\frac{\Gamma_p\left(\left\langle\frac{p^ih}{d}-\frac{p^ia}{q-1}\right\rangle \right)}{\Gamma_p\left(\left\langle \frac{hp^i}{d}\right\rangle\right)}\prod\limits_{h=1}^{k-1}\frac{\Gamma_p\left(\left\langle\frac{-p^i h}{k}+\frac{p^ia}{q-1}\right\rangle \right)}{\Gamma_p\left(\left\langle \frac{-hp^i}{k}\right\rangle\right)}\\
	&\hspace*{.5cm}\times\prod\limits_{\begin{subarray}{1} \ h=1 \\ h\neq\frac{d-k}{2}\end{subarray}}^{d-k-1}\frac{\Gamma_p\left(\left\langle\frac{-p^i h}{d-k}+\frac{p^ia}{q-1}\right\rangle \right)}{\Gamma_p\left(\left\langle \frac{-hp^i}{d-k}\right\rangle\right)}\\
	&=-\frac{q}{q-1} +\frac{(-1)^rq}{q-1}\sum_{a=1}^{q-2}\sum_{t\in\mathbb{F}_q}\varphi(t(t-1))\overline{\omega}^{a}(xt)(-p)^{\sum\limits_{i=0}^{r-1} T_{a,i}+\left\lfloor \frac{-ap^i}{q-1}\right\rfloor}\prod_{i=0}^{r-1}\Gamma_p^2\left(\left\langle\frac{p^ia}{q-1}\right\rangle \right)\\
	&\hspace*{.5cm}\times \prod\limits_{h=1}^{d-1}\frac{\Gamma_p\left(\left\langle\frac{p^ih}{d}-\frac{p^ia}{q-1}\right\rangle \right)}{\Gamma_p\left(\left\langle \frac{hp^i}{d}\right\rangle\right)}\prod\limits_{h=1}^{k-1}\frac{\Gamma_p\left(\left\langle\frac{-p^i h}{k}+\frac{p^ia}{q-1}\right\rangle \right)}{\Gamma_p\left(\left\langle \frac{-hp^i}{k}\right\rangle\right)}\prod\limits_{\begin{subarray}{1} \ h=1 \\ h\neq\frac{d-k}{2}\end{subarray}}^{d-k-1}\frac{\Gamma_p\left(\left\langle\frac{-p^i h}{d-k}+\frac{p^ia}{q-1}\right\rangle \right)}{\Gamma_p\left(\left\langle \frac{-hp^i}{d-k}\right\rangle\right)}.
\end{align*}
Since $1\leq a\leq q-2$ and $\gcd(p^i,q-1)=1$, therefore $\frac{ap^i}{q-1}$ is not an integer. Also, $\lfloor x\rfloor+\lfloor-x\rfloor =-1$ if $x\not\in\mathbb{Z}$, and hence $\lfloor\frac{ap^i}{q-1}\rfloor + \lfloor\frac{-ap^i}{q-1}\rfloor=-1$. Thus, 
\begin{align*}
	A_x&=-\frac{q}{q-1} +\frac{1}{q-1}\sum_{a=1}^{q-2}\sum_{t\in\mathbb{F}_q}\varphi(t(t-1))\overline{\omega}^{a}(xt)(-p)^{\sum\limits_{i=0}^{r-1} u_{a,i}}\prod_{i=0}^{r-1}\Gamma_p^2\left(\left\langle\frac{p^ia}{q-1}\right\rangle \right)\\
	&\hspace*{.5cm}\times \prod\limits_{h=1}^{d-1}\frac{\Gamma_p\left(\left\langle\frac{p^ih}{d}-\frac{p^ia}{q-1}\right\rangle \right)}{\Gamma_p\left(\left\langle \frac{hp^i}{d}\right\rangle\right)}\prod\limits_{h=1}^{k-1}\frac{\Gamma_p\left(\left\langle\frac{-p^i h}{k}+\frac{p^ia}{q-1}\right\rangle \right)}{\Gamma_p\left(\left\langle \frac{-hp^i}{k}\right\rangle\right)}\prod\limits_{\begin{subarray}{1} \ h=1 \\ h\neq\frac{d-k}{2}\end{subarray}}^{d-k-1}\frac{\Gamma_p\left(\left\langle\frac{-p^i h}{d-k}+\frac{p^ia}{q-1}\right\rangle \right)}{\Gamma_p\left(\left\langle \frac{-hp^i}{d-k}\right\rangle\right)},
\end{align*}
where $u_{a,i}=-\sum\limits_{h=1}^{d-1} \left\lfloor\left\langle\frac{hp^i}{d}\right\rangle -\frac{ap^i}{q-1}\right\rfloor-\sum\limits_{h=0}^{k-1} \left\lfloor\left\langle\frac{-hp^i}{k}\right\rangle+\frac{ap^i}{q-1}\right\rfloor-\sum\limits_{\begin{subarray}{1} \ h=0 \\ h\neq\frac{d-k}{2}\end{subarray}}^{d-k-1} \left\lfloor\left\langle\frac{-hp^i}{d-k}\right\rangle+\frac{ap^i}{q-1}\right\rfloor.$ Adding and subtracting the term under the summation for $a=0$ gives
\begin{align}
&   A_x=-\frac{q}{q-1}-\frac{1}{q-1}\sum_{t\in\mathbb{F}_q}\varphi(t(t-1))+	\frac{1}{q-1}\sum_{a=0}^{q-2}\sum_{t\in\mathbb{F}_q}\varphi(t(t-1))\overline{\omega}^{a}(xt)(-p)^{\sum\limits_{i=0}^{r-1} u_{a,i}}\nonumber\\
   &\hspace*{.5cm}\times\prod_{i=0}^{r-1} \prod\limits_{h=1}^{d-1}\frac{\Gamma_p\left(\left\langle\frac{p^ih}{d}-\frac{p^ia}{q-1}\right\rangle \right)}{\Gamma_p\left(\left\langle \frac{hp^i}{d}\right\rangle\right)}\prod\limits_{h=0}^{k-1}\frac{\Gamma_p\left(\left\langle\frac{-p^i h}{k}+\frac{p^ia}{q-1}\right\rangle \right)}{\Gamma_p\left(\left\langle \frac{-hp^i}{k}\right\rangle\right)}\prod\limits_{\begin{subarray}{1} \ h=0 \\ h\neq\frac{d-k}{2}\end{subarray}}^{d-k-1}\frac{\Gamma_p\left(\left\langle\frac{-p^i h}{d-k}+\frac{p^ia}{q-1}\right\rangle \right)}{\Gamma_p\left(\left\langle \frac{-hp^i}{d-k}\right\rangle\right)}\label{eq-27}\\
   &=-1-\sum_{t\in\mathbb{F}_q}\varphi(t(t-1)) \times\nonumber\\
   &{_{d-1}}G_{d-1}\left[\begin{array}{cccccccccc}
   	\frac{1}{d}, \hspace{-.2cm}& \ldots, \hspace{-.2cm}& \frac{k}{d}, \hspace{-.2cm}& \frac{k+1}{d}, \hspace{-.2cm}& \ldots, \hspace{-.2cm}& \frac{k+\frac{d-k}{2}-1}{d}, \hspace{-.2cm}& \frac{k+\frac{d-k}{2}}{d}, \hspace{-.2cm}& \ldots,\hspace{-.2cm} & \frac{d-2}{d}, \hspace{-.2cm}& \frac{d-1}{d}\vspace{0.1cm} \\
   	0, \hspace{-.2cm}& \ldots, \hspace{-.2cm}& \frac{k-1}{k}, \hspace{-.2cm}& \frac{1}{d-k}, \hspace{-.2cm}& \ldots,\hspace{-.2cm}& \frac{\frac{d-k}{2}-1}{d-k},\hspace{-.2cm}& \frac{\frac{d-k}{2}+1}{d-k},\hspace{-.2cm}& \ldots, \hspace{-.2cm}& \frac{d-k-1}{d-k}, \hspace{-.2cm}& 0 
   \end{array}|xt
   \right]_q.\notag
\end{align}
 Using \eqref{eq-0} and \eqref{eq-4},
we obtain $\sum_{t\in\mathbb{F}_q}\varphi(t(t-1))=-1$ and substituting this value in the first summation of \eqref{eq-27} gives the last equality. This completes the proof of the theorem.
\end{proof}
\begin{proof}[Proof of Theorem \ref{MT-4}]
	For $x\in\mathbb{F}_q^\times$, we consider
\begin{align}\label{eq-28}
	B_x&:= {_{d-1}}G_{d-1}\left[\begin{array}{cccccc}
		\frac{1}{d}, & \ldots, & \frac{k}{d}, & \frac{k+1}{d}, & \ldots , & \frac{d-1}{d}\vspace{0.1cm} \\
		0, & \ldots, & \frac{k-1}{k}, & \frac{1}{d-k}, & \ldots , & \frac{d-k-1}{d-k} 
	\end{array}|x
	\right]_q\notag\\
	&=-\frac{1}{q-1}\sum_{a=0}^{q-2}\overline{\omega}^{a}(-x)(-p)^{\sum_{i=0}^{r-1}N_{a,i}}\prod_{i=0}^{r-1}\Gamma_p\left(\left\langle\frac{p^ia}{q-1}\right\rangle \right)\nonumber\\
&\hspace*{.5cm}\times\prod\limits_{h=1}^{d-1}\frac{\Gamma_p\left(\left\langle\frac{p^ih}{d}-\frac{p^ia}{q-1}\right\rangle \right)}{\Gamma_p\left(\left\langle \frac{hp^i}{d}\right\rangle\right)}\prod\limits_{h=1}^{d-k-1}\frac{\Gamma_p\left(\left\langle\frac{-p^i h}{d-k
	}+\frac{p^ia}{q-1}\right\rangle \right)}{\Gamma_p\left(\left\langle \frac{-hp^i}{d-k}\right\rangle\right)}\prod\limits_{h=1}^{k-1}\frac{\Gamma_p\left(\left\langle\frac{-p^i h}{k
	}+\frac{p^ia}{q-1}\right\rangle \right)}{\Gamma_p\left(\left\langle \frac{-hp^i}{k}\right\rangle\right)}
\end{align}
where $N_{a,i}=-\sum\limits_{h=1}^{d-1} \left\lfloor\left\langle\frac{hp^i}{d}\right\rangle -\frac{ap^i}{q-1}\right\rfloor- \sum\limits_{h=0}^{k-1} \left\lfloor\left\langle\frac{-hp^i}{k}\right\rangle+\frac{ap^i}{q-1}\right\rfloor- \sum\limits_{h=1}^{d-k-1} \left\lfloor\left\langle\frac{-hp^i}{d-k}\right\rangle+\frac{ap^i}{q-1}\right\rfloor.$
Since $d$ is an even integers, $\frac{d}{2}\in \mathbb{Z}$. We rewrite \eqref{eq-28} as
\begin{align*}
	B_x&=-\frac{1}{q-1}\sum_{a=0}^{q-2}\overline{\omega}^{a}(-x)(-p)^{\sum\limits_{i=0}^{r-1}N_{a,i}+\left\lfloor\left\langle\frac{p^i}{2}\right\rangle -\frac{ap^i}{q-1}\right\rfloor}\prod_{i=0}^{r-1}\frac{\Gamma_p\left(\left\langle\frac{p^ia}{q-1}\right\rangle \right)}{(-p)^{\left\lfloor\left\langle\frac{p^i}{2}\right\rangle -\frac{ap^i}{q-1}\right\rfloor}}\frac{\Gamma_p\left(\left\langle\frac{p^i}{2}-\frac{p^ia}{q-1}\right\rangle \right)}{\Gamma_p\left(\left\langle\frac{p^i}{2}\right\rangle \right)}\nonumber\\
&\hspace*{.5cm}\times\prod\limits_{\begin{subarray}{1} \ h=1 \\ h\neq\frac{d}{2}\end{subarray}}^{d-1}\frac{\Gamma_p\left(\left\langle\frac{p^ih}{d}-\frac{p^ia}{q-1}\right\rangle \right)}{\Gamma_p\left(\left\langle \frac{hp^i}{d}\right\rangle\right)}\prod\limits_{h=1}^{d-k-1}\frac{\Gamma_p\left(\left\langle\frac{-p^i h}{d-k
	}+\frac{p^ia}{q-1}\right\rangle \right)}{\Gamma_p\left(\left\langle \frac{-hp^i}{d-k}\right\rangle\right)}\prod\limits_{h=1}^{k-1}\frac{\Gamma_p\left(\left\langle\frac{-p^i h}{k
	}+\frac{p^ia}{q-1}\right\rangle \right)}{\Gamma_p\left(\left\langle \frac{-hp^i}{k}\right\rangle\right)}.
\end{align*}
Using Lemma \ref{lemma4.2}, we obtain
\begin{align*}
	B_x&=\frac{1}{q-1}\sum_{a=0}^{q-2}\sum_{t\in\mathbb{F}_q^\times}\varphi(t(t-1))\overline{\omega}^a\left(\frac{x}{t}\right) (-p)^{\sum\limits_{i=0}^{r-1} v_{i,a}}\\
	&\hspace*{.5cm}\times\prod\limits_{\begin{subarray}{1} \ h=1 \\ h\neq\frac{d}{2}\end{subarray}}^{d-1}\frac{\Gamma_p\left(\left\langle\frac{p^ih}{d}-\frac{p^ia}{q-1}\right\rangle \right)}{\Gamma_p\left(\left\langle \frac{hp^i}{d}\right\rangle\right)}\prod\limits_{h=1}^{d-k-1}\frac{\Gamma_p\left(\left\langle\frac{-p^i h}{d-k
		}+\frac{p^ia}{q-1}\right\rangle \right)}{\Gamma_p\left(\left\langle \frac{-hp^i}{d-k}\right\rangle\right)}\prod\limits_{h=1}^{k-1}\frac{\Gamma_p\left(\left\langle\frac{-p^i h}{k
		}+\frac{p^ia}{q-1}\right\rangle \right)}{\Gamma_p\left(\left\langle \frac{-hp^i}{k}\right\rangle\right)},
\end{align*}
where $v_{i,a}=-\sum\limits_{\begin{subarray}{1} \ h=1 \\ h\neq\frac{d}{2}\end{subarray}}^{d-1} \left\lfloor\left\langle\frac{hp^i}{d}\right\rangle -\frac{ap^i}{q-1}\right\rfloor- \sum\limits_{h=1}^{k-1} \left\lfloor\left\langle\frac{-hp^i}{k}\right\rangle+\frac{ap^i}{q-1}\right\rfloor- \sum\limits_{h=1}^{d-k-1} \left\lfloor\left\langle\frac{-hp^i}{d-k}\right\rangle+\frac{ap^i}{q-1}\right\rfloor.$ Thus, we have 
\begin{align*}
	B_x&=-\sum\limits_{t\in\mathbb{F}_q^\times}\varphi(t(t-1)) {_{d-2}}G_{d-2}\left[\begin{array}{cccccc}
		\frac{1}{d}, & \ldots, & \frac{\frac{d}{2}-1}{d}, & \frac{\frac{d}{2}+1}{d},& \ldots,  & \frac{d-1}{d} \\
	c_1, & \ldots, & c_{\frac{d}{2}-1}, & c_{\frac{d}{2}}, & \ldots, & c_{d-2} 
	\end{array}|\frac{x}{t}
	\right]_q,
	\end{align*}
where $c$'s are as defined in the statement of the theorem. Now, using the translation $t \mapsto t^{-1}$, we obtain the required identity.
\end{proof} 
\section{Applications to elliptic curves}
In this section, we prove Theorems \ref{MT-5}, \ref{MT-7}, and \ref{MT-6} which provide certain identities for the trace of Frobenius of elliptic curves. Firstly, we prove a lemma that gives a relation between $_3G_3[\cdots]$ and $_2G_2[\cdots]$, and this lemma will be used to prove our main results.
\begin{lemma}\label{lemma_4.3}
Let $p$ be an odd prime and $q=p^r$, $r\geq1$. For $x\in\mathbb{F}_q$, we have
\begin{align*}
	{_{3}}G_{3}\left[\begin{array}{ccc}
		\frac{1}{4},  &\frac{1}{2}, & \frac{3}{4} \vspace*{0.1cm}\\
		0, & \frac{1}{2}, & \frac{1}{2}
	\end{array}|x
	\right]_q=	{_{2}}G_{2}\left[\begin{array}{cc}
		\frac{1}{4},  & \frac{3}{4} \vspace*{0.1cm}\\
		0, & \frac{1}{2}
	\end{array}|x
	\right]_q +\frac{\varphi(x)}{q}.
\end{align*}
\end{lemma}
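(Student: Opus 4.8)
The plan is to expand both hypergeometric functions directly from Definition \ref{defin1} and compare the two character sums term by term in the summation index $a$. Writing out the right-hand side, the parameter list is the pair $(\frac14,0)$ together with $(\frac34,\frac12)$; the left-hand side has \emph{exactly the same} two pairs plus one extra pair $(\frac12,\frac12)$, and its sign weight is $(-1)^{3a}$ rather than $(-1)^{2a}$. Thus, after factoring out the common product over the two shared pairs (call it $C_a$, the product of the $(-p)$-powers and $\Gamma_p$-quotients coming from $(\frac14,0)$ and $(\frac34,\frac12)$), the $a$-th summand of $_3G_3$ equals $(-1)^a\,W_a$ times the $a$-th summand of $_2G_2$, where $W_a$ denotes the extra factor contributed by the pair $(\frac12,\frac12)$. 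The whole problem reduces to understanding $W_a$.

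First I would dispose of the \emph{generic} indices $a\neq\frac{q-1}{2}$. Since $p$ is odd, $\langle -\frac{p^i}{2}\rangle=\langle\frac{p^i}{2}\rangle=\frac12$, so the two denominators in $W_a$ are both $\Gamma_p(\langle\frac{p^i}{2}\rangle)$, and shifting by the integer $p^i$ turns the second numerator $\Gamma_p(\langle(-\frac12+\frac{a}{q-1})p^i\rangle)$ into $\Gamma_p(\langle(\frac12+\frac{a}{q-1})p^i\rangle)$; hence the $\Gamma_p$-part of $W_a$ is exactly the left side of \eqref{eq-13} and equals $\overline\omega^a(-1)=(-1)^a$. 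For the $(-p)$-exponent one checks $\lfloor\frac12-y\rfloor+\lfloor\frac12+y\rfloor=0$ whenever $y=\frac{ap^i}{q-1}$ has fractional part $\neq\frac12$, which holds for all $i$ precisely when $a\neq\frac{q-1}{2}$. Therefore $W_a=(-1)^a$ for these $a$, and $(-1)^aW_a=1$, so the $_3G_3$ and $_2G_2$ summands coincide and cancel in the difference.

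The whole difference thus collapses to the single term $a_0=\frac{q-1}{2}$, where $\overline\omega^{a_0}=\varphi$, so the factor $\overline\omega^{a_0}(x)=\varphi(x)$ is already visible. Here the plan has two computations. For the common factor $C_{a_0}$: at $a_0$ one has $\frac{a_0p^i}{q-1}=\frac{p^i}{2}$, the $\Gamma_p$-quotients telescope to $1$ using $\langle-\frac{p^i}{4}\rangle=\langle\frac{3p^i}{4}\rangle$, and the $(-p)$-exponent vanishes because $\lfloor\frac{p^i}{4}\rfloor+\lfloor\frac{3p^i}{4}\rfloor=p^i-1$; hence $C_{a_0}=1$. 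For the extra factor $W_{a_0}$: the $(-p)$-power is $(-p)^{-r}$ and the $\Gamma_p$-part is $\Gamma_p(\langle\frac{p^i}{2}\rangle)^{-2r}$, and I would evaluate $\Gamma_p(\langle\frac12\rangle)^{2r}$ by combining the Gross--Koblitz formula (Theorem \ref{thm2_3}) applied to $g(\varphi)$ with $\pi^{p-1}=-p$ and Lemma \ref{lemma2_1} (which gives $g(\varphi)^2=q\,\varphi(-1)$), yielding $\Gamma_p(\langle\frac12\rangle)^{2r}=(-1)^r\varphi(-1)$ and hence $W_{a_0}=\varphi(-1)/q$. Putting these together, the surviving contribution is $\frac{-1}{q-1}\,\varphi(x)\big[(-1)^{a_0}W_{a_0}-1\big]$; since $(-1)^{a_0}=\varphi(-1)$ and $\varphi(-1)^2=1$, the bracket is $\frac{1}{q}-1=\frac{1-q}{q}$, and the prefactors multiply out to $\frac{\varphi(x)}{q}$, which is exactly the claimed correction. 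The case $x=0$ is trivial since every summand carries $\overline\omega^a(0)=0$ and $\varphi(0)=0$. I expect the main obstacle to be the careful bookkeeping of the exceptional term $a_0=\frac{q-1}{2}$, especially verifying that $C_{a_0}=1$ and pinning down $\Gamma_p(\langle\frac12\rangle)^{2r}$ with the correct sign; the generic cancellation, by contrast, is a direct application of \eqref{eq-13}.
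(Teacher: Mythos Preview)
Your proposal is correct and follows essentially the same route as the paper: both arguments expand the definition, invoke \eqref{eq-13} together with the floor identity $\lfloor\tfrac12-y\rfloor+\lfloor\tfrac12+y\rfloor=0$ to kill the extra $(\tfrac12,\tfrac12)$ pair for $a\neq\tfrac{q-1}{2}$, and then isolate the exceptional index $a_0=\tfrac{q-1}{2}$ using the Gross--Koblitz evaluation $\prod_i\Gamma_p(\langle\tfrac{p^i}{2}\rangle)^2=(-1)^r\varphi(-1)$. The only cosmetic difference is that the paper manipulates the ${}_3G_3$ sum until it becomes ${}_2G_2$ plus two explicit correction terms $-\tfrac{\varphi(x)}{q(q-1)}+\tfrac{\varphi(x)}{q-1}$, whereas you compute the difference ${}_3G_3-{}_2G_2$ directly; your justification that $C_{a_0}=1$ via ``$\lfloor\tfrac{p^i}{4}\rfloor+\lfloor\tfrac{3p^i}{4}\rfloor=p^i-1$'' is a little telegraphic (the actual exponent involves $\lfloor\langle\tfrac{p^i}{4}\rangle-\tfrac{p^i}{2}\rfloor$, $\lfloor\tfrac{p^i}{2}\rfloor$, etc.), but the conclusion is correct and you have already flagged this as the delicate step.
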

\begin{proof}
For $x\in\mathbb{F}_q$, we have
	\begin{align*}
	&{_{3}}G_{3}\left[\begin{array}{ccc}
		\frac{1}{4},  &\frac{1}{2}, & \frac{3}{4}\vspace*{0.1cm} \\
		0, & \frac{1}{2}, & \frac{1}{2}
	\end{array}|x
	\right]_q\\
	&=-\frac{1}{q-1}\sum_{a=0}^{q-2}(-1)^{3a}\overline{\omega}^a(x)(-p)^{\sum_{i=0}^{r-1}\beta_{i,a}}\prod\limits_{i=0}^{r-1}\Gamma_{p}\left(\left\langle\frac{ap^i}{q-1}\right\rangle\right)M_{i,a}\\
	&\times\frac{\Gamma_{p}\left(\left\langle\frac{p^i}{4}-\frac{ap^i}{q-1}\right\rangle\right)\Gamma_{p}\left(\left\langle\frac{3p^i}{4}-\frac{ap^i}{q-1}\right\rangle\right)\Gamma_{p}\left(\left\langle\frac{-p^i}{2}+\frac{ap^i}{q-1}\right\rangle\right)}{\Gamma_p\left(\left\langle\frac{p^i}{4}\right\rangle\right)\Gamma_p\left(\left\langle\frac{3p^i}{4}\right\rangle\right)\Gamma_p\left(\left\langle\frac{-p^i}{2}\right\rangle\right)},
\end{align*}
where $\beta_{i,a}=-\left\lfloor\left\langle\frac{p^i}{4}\right\rangle-\frac{ap^i}{q-1}\right\rfloor-\left\lfloor\left\langle\frac{3p^i}{4}\right\rangle-\frac{ap^i}{q-1}\right\rfloor-\left\lfloor\left\langle\frac{p^i}{2}\right\rangle-\frac{ap^i}{q-1}\right\rfloor-\left\lfloor\left\langle\frac{-p^i}{2}\right\rangle+\frac{ap^i}{q-1}\right\rfloor-\left\lfloor\left\langle\frac{-p^i}{2}\right\rangle+\frac{ap^i}{q-1}\right\rfloor-\left\lfloor\frac{ap^i}{q-1}\right\rfloor$ and $M_{i,a}=\frac{\Gamma_{p}\left(\left\langle\frac{p^i}{2}-\frac{ap^i}{q-1}\right\rangle\right)\Gamma_{p}\left(\left\langle\frac{-p^i}{2}+\frac{ap^i}{q-1}\right\rangle\right)}{\Gamma_p\left(\left\langle\frac{p^i}{2}\right\rangle\right)\Gamma_p\left(\left\langle\frac{-p^i}{2}\right\rangle\right)}$. Taking out the term for $a=\frac{q-1}{2}$ and then using \eqref{eq-13} and the fact that $\left\lfloor\left\langle\frac{p^i}{2}\right\rangle-\frac{ap^i}{q-1}\right\rfloor+\left\lfloor\left\langle\frac{-p^i}{2}\right\rangle+\frac{ap^i}{q-1}\right\rfloor=0$ for $0\leq a\leq q-2$ with $a\neq \frac{q-1}{2}$, we obtain
	\begin{align*}
	{_{3}}G_{3}\left[\begin{array}{ccc}
		\frac{1}{4},  &\frac{1}{2}, & \frac{3}{4} \vspace*{0.1cm}\\
		0, & \frac{1}{2}, & \frac{1}{2}
	\end{array}|x
	\right]_q&=-\frac{1}{q-1}\sum_{\begin{subarray}{1} \ a=0, \\a\neq\frac{ q-1}{2} \end{subarray}}^{q-2}(-1)^{3a}\overline{\omega}^a(-x)(-p)^{\sum_{i=0}^{r-1}\alpha_{i,a}}\prod\limits_{i=0}^{r-1}\Gamma_{p}\left(\left\langle\frac{ap^i}{q-1}\right\rangle\right)\\
	&\times\frac{\Gamma_{p}\left(\left\langle\frac{p^i}{4}-\frac{ap^i}{q-1}\right\rangle\right)\Gamma_{p}\left(\left\langle\frac{3p^i}{4}-\frac{ap^i}{q-1}\right\rangle\right)\Gamma_{p}\left(\left\langle\frac{-p^i}{2}+\frac{ap^i}{q-1}\right\rangle\right)}{\Gamma_p\left(\left\langle\frac{p^i}{4}\right\rangle\right)\Gamma_p\left(\left\langle\frac{3p^i}{4}\right\rangle\right)\Gamma_p\left(\left\langle\frac{-p^i}{2}\right\rangle\right)}\\
	&-\frac{\varphi(-x)}{(q-1)(-p)^r}\prod_{i=0}^{r-1}\frac{1}{\Gamma_p\left(\left\langle\frac{p^i}{2}\right\rangle\right)^2},
\end{align*}
where $\alpha_{i,a}=-\left\lfloor\left\langle\frac{p^i}{4}\right\rangle-\frac{ap^i}{q-1}\right\rfloor-\left\lfloor\left\langle\frac{3p^i}{4}\right\rangle-\frac{ap^i}{q-1}\right\rfloor-\left\lfloor\left\langle\frac{-p^i}{2}\right\rangle+\frac{ap^i}{q-1}\right\rfloor-\left\lfloor\frac{ap^i}{q-1}\right\rfloor$. Using Gross-Koblitz formula for $g(\varphi)^2$, we obtain $\prod_{i=0}^{r-1}\Gamma_p\left(\left\langle\frac{p^i}{2}\right\rangle\right)^2=(-1)^r\varphi(-1)$. Adding and subtracting the term under the summation for $a=\frac{q-2}{2}$ and using $\overline{\omega}^a(-1)=(-1)^a$, we deduce that
\begin{align*}
	{_{3}}G_{3}\left[\begin{array}{ccc}
	\frac{1}{4},  &\frac{1}{2}, & \frac{3}{4} \vspace*{0.1cm}\\
	0, & \frac{1}{2}, & \frac{1}{2}
\end{array}|x
\right]_q&=-\frac{1}{q-1}\sum_{a=0}^{q-2}\overline{\omega}^a(x)(-p)^{\sum_{i=0}^{r-1}\alpha_{i,a}}\prod\limits_{i=0}^{r-1}\Gamma_{p}\left(\left\langle\frac{ap^i}{q-1}\right\rangle\right)\\
&\times\frac{\Gamma_{p}\left(\left\langle\frac{p^i}{4}-\frac{ap^i}{q-1}\right\rangle\right)\Gamma_{p}\left(\left\langle\frac{3p^i}{4}-\frac{ap^i}{q-1}\right\rangle\right)\Gamma_{p}\left(\left\langle\frac{-p^i}{2}+\frac{ap^i}{q-1}\right\rangle\right)}{\Gamma_p\left(\left\langle\frac{p^i}{4}\right\rangle\right)\Gamma_p\left(\left\langle\frac{3p^i}{4}\right\rangle\right)\Gamma_p\left(\left\langle\frac{-p^i}{2}\right\rangle\right)}\\
&-\frac{\varphi(x)}{q(q-1)}+\frac{\varphi(x)}{q-1}.
\end{align*}
This completes the proof of the lemma.
\end{proof}
Next, we prove another lemma.
\begin{lemma}\label{lemma_4.4}
Let $p$ be an odd prime and $q=p^r,r\geq1$. We have the following transformations.
\begin{enumerate}
\item For $t\in\mathbb{F}_q^\times$, we have
\begin{align*}
{_2}G_{2}\left[\begin{array}{cc}
				\frac{1}{4},  & \frac{3}{4} \vspace*{0.1cm}\\
				\frac{1}{2}, & \frac{1}{2}
			\end{array}|t
			\right]_q={_2}G_{2}\left[\begin{array}{cc}				 
				\frac{1}{2}, & \frac{1}{2}\vspace*{0.1cm}\\
				\frac{1}{4},  & \frac{3}{4}
			\end{array}|\frac{1}{t}
			\right]_q.
\end{align*}
\item For $t\in\mathbb{F}_q$, we have
\begin{align*}
{_2}G_{2}\left[\begin{array}{cc}
				\frac{1}{4},  & \frac{3}{4} \vspace*{0.1cm}\\
				\frac{1}{2}, & \frac{1}{2}
			\end{array}|t
			\right]_q=\frac{\varphi(-t)}{q}\cdot {_2}G_{2}\left[\begin{array}{cc}
				\frac{1}{4},  & \frac{3}{4} \\
				0, & 0
			\end{array}|t
			\right]_q.
\end{align*}
\item For $t\in\mathbb{F}_q^\times$, we have
\begin{align*}
{_2}G_{2}\left[\begin{array}{cc}
				\frac{1}{3},  & \frac{2}{3} \vspace*{0.1cm}\\
				0, & 0
			\end{array}|t
			\right]_q=\varphi(-3t)\cdot q\cdot {_2}G_{2}\left[\begin{array}{cc}
				\frac{1}{2}, & \frac{1}{2}\vspace*{0.1cm}\\
				\frac{1}{6},  & \frac{5}{6} 
			\end{array}|\frac{1}{t}
			\right]_q.
\end{align*}
\end{enumerate}
Identity $(3)$ is true for $p>3$.
\end{lemma}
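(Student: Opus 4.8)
The plan is to prove all three identities by expanding each $_2G_2[\cdots]_q$ directly from Definition \ref{defin1}, then comparing the two character sums after a judicious substitution of the summation index $a$, using the $p$-adic gamma identities \eqref{eq-12} and \eqref{eq-13} together with the Gross--Koblitz formula (Theorem \ref{thm2_3}) to evaluate the leftover $\Gamma_p$-products as explicit scalars. A structural point worth stressing is that $q$ need not be congruent to $1$ modulo the denominators $4$ or $6$, so characters of those orders may fail to exist and one cannot recast these as Gauss-sum (i.e.\ $_2F_1^\ast$) transformations; the argument must stay at the level of $\Gamma_p$, which is exactly what Definition \ref{defin1} permits. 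Note also that $n=2$ renders the sign $(-1)^{an}$ trivial throughout, simplifying the bookkeeping.

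For part (1) the key is the substitution $a\mapsto q-1-a$ in the defining sum. This sends $\overline\omega^{a}(1/t)\mapsto\overline\omega^{a}(t)$, producing the inversion of the argument, and, since $\langle(a_k-1)p^i+x\rangle=\langle a_kp^i+x\rangle$ for any integer shift, it interchanges the role of a top parameter $a_k$ (whose $\Gamma_p$-argument is $\langle(a_k-\tfrac{a}{q-1})p^i\rangle$) with that of a bottom parameter $-a_k\equiv 1-a_k$. Because each of the multisets $\{\tfrac14,\tfrac34\}$ and $\{\tfrac12,\tfrac12\}$ is stable under $x\mapsto 1-x$, the transformed right-hand sum matches the left-hand sum once one checks that the $(-p)$-power exponents, built from the floor terms $\lfloor\langle a_kp^i\rangle-\tfrac{ap^i}{q-1}\rfloor$ and $\lfloor\langle -b_kp^i\rangle+\tfrac{ap^i}{q-1}\rfloor$, transform symmetrically under $a\mapsto q-1-a$; no scalar factor is produced, in agreement with the stated equality.

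For part (2) I would instead use the half-shift $a\mapsto a+\tfrac{q-1}{2}$, which adds $\tfrac{p^i}{2}$ to every occurrence of $\tfrac{ap^i}{q-1}$. The top multiset $\{\tfrac14,\tfrac34\}$ is stable under the induced shift $x\mapsto x+\tfrac12$ of parameters, while each bottom $\tfrac12$ is carried to $0$; at the same time the character factor becomes $\overline\omega^{a+(q-1)/2}(t)=\varphi(t)\,\overline\omega^{a}(t)$ since $\overline\omega^{(q-1)/2}=\varphi$. The discrepancy between the two sides is then the residual denominator $\prod_{i}\Gamma_p(\langle p^i/2\rangle)^2$ left over from the two bottom $\tfrac12$'s, which Gross--Koblitz and Lemma \ref{lemma2_1} evaluate via $g(\varphi)^2=q\,\varphi(-1)$ as $(-1)^r\varphi(-1)$, together with the net power of $-p$ accumulated from the floor terms. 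Reconciling these produces precisely the prefactor $\tfrac{\varphi(-t)}{q}=\tfrac{\varphi(-1)\varphi(t)}{q}$, and this is essentially the verification already exemplified in the proof of Lemma \ref{lemma_4.3}.

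Part (3) is the genuinely different and hardest identity: a cubic-type transformation linking denominators $3$, $6$, and $2$, which also inverts the argument. Here I would combine the inversion substitution of part (1) with a Hasse--Davenport / Gauss-multiplication relation for $\Gamma_p$ of modulus $2$, rewriting the order-$6$ data $\{\tfrac16,\tfrac56\}$ together with an order-$2$ factor in terms of the order-$3$ data $\{\tfrac13,\tfrac23\}$ (using $2\cdot\tfrac16=\tfrac13$, $2\cdot\tfrac56\equiv\tfrac23$, and $\tfrac16+\tfrac12=\tfrac23$). The duplication relation introduces the quadratic character evaluated at $2$ and at $3$ through its normalizing term, which should assemble into $\varphi(-3t)$, and it introduces a compensating power of $p$ yielding the factor $q$; the hypothesis $p>3$ enters exactly to keep $3$ invertible and the modulus-$6$ multiplication relation valid. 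The main obstacle across all three parts is the exact accounting of these $(-p)$-power exponents: each index substitution alters the floor terms in a way that must be matched against the $\Gamma_p$ reflection and multiplication identities so that the residual scalars come out as the precise constants $1$, $\tfrac{\varphi(-t)}{q}$, and $\varphi(-3t)\cdot q$, with part (3) demanding in addition the correct form of the Gauss-multiplication relation and the confirmation that the character values of $2$ and $3$ combine into $\varphi(-3t)$.
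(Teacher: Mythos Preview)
Your approach to parts (1) and (2) is exactly the paper's: for (1) substitute $a\mapsto -a$ (equivalently $q-1-a$), and for (2) apply the half-shift $a\mapsto a+\tfrac{q-1}{2}$ together with the Gross--Koblitz evaluation $\prod_i\Gamma_p(\langle p^i/2\rangle)^2=(-1)^r\varphi(-1)$.

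For part (3) you have the right ingredients but the mechanism is slightly misidentified. The paper does \emph{not} apply a duplication/Hasse--Davenport relation to the running $\Gamma_p$-factors; such a relation would double the summation index and not match the target sum. Instead, the paper reuses the two substitutions from (1) and (2): first the half-shift $a\mapsto a+\tfrac{q-1}{2}$ applied to the left-hand sum (with top $\{\tfrac13,\tfrac23\}$, bottom $\{0,0\}$) sends the running $\Gamma_p$-arguments from $\tfrac13,\tfrac23$ to $\tfrac16,\tfrac56$ and from $0,0$ to $\tfrac12,\tfrac12$, exactly as in (2); then $a\mapsto -a$ swaps top and bottom and inverts the argument, as in (1). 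The only place a multiplication-type identity enters is in evaluating the leftover \emph{constant} ratio
\[
\prod_{i=0}^{r-1}\frac{\Gamma_p(\langle p^i/3\rangle)\,\Gamma_p(\langle 2p^i/3\rangle)}{\Gamma_p(\langle p^i/6\rangle)\,\Gamma_p(\langle 5p^i/6\rangle)\,\Gamma_p(\langle p^i/2\rangle)^2},
\]
and this is done via Lemma \ref{lemma-3_2} with $t=3$, $a=\tfrac{q-1}{2}$ (together with the $g(\varphi)^2$ evaluation). That is where the factor $\varphi(3)$ comes from (as $\omega^{(q-1)/2}(3^3)=\varphi(3)$), not a $\varphi(2)$; combined with the $\varphi(-1)$ from the bottom-$\tfrac12$ denominators and the $\varphi(t)$ from the half-shift, this yields $\varphi(-3t)$, and the net shift in the $(-p)$-exponents produces the factor $q$. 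So your sketch is on the right track, but the conversion between the order-$3$ and order-$6$ data is achieved by the index half-shift, with the multiplication formula used only for the scalar normalisation.
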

\begin{proof}
	We first prove $(1)$. For $t\in\mathbb{F}_q^\times$, we consider
	\begin{align}\label{neweqn-03}
		A_t:&={_2}G_{2}\left[\begin{array}{cc}
			\frac{1}{4},  & \frac{3}{4} \vspace*{0.1cm}\\
			\frac{1}{2}, & 	\frac{1}{2}
		\end{array}|t
		\right]_q\notag\\
		&=\frac{-1}{q-1}\sum_{a=0}^{q-2}\overline{\omega}^a(t)(-p)^{\sum_{i=0}^{r-1}b_{i,a}}\prod_{i=0}^{r-1}\frac{\Gamma_p\left(\left\langle\frac{p^i}{4}-\frac{ap^i}{q-1}\right\rangle\right)\Gamma_p\left(\left\langle\frac{3p^i}{4}-\frac{ap^i}{q-1}\right\rangle\right)}{\Gamma_p\left(\left\langle\frac{p^i}{4}\right\rangle\right)\Gamma_p\left(\left\langle\frac{3p^i}{4}\right\rangle\right)}\notag\\
		&\hspace*{.2cm}\times\frac{\Gamma_p\left(\left\langle\frac{-p^i}{2}+\frac{ap^i}{q-1}\right\rangle\right)^2}{\Gamma_p\left(\left\langle\frac{-p^i}{2}\right\rangle\right)^2},
	\end{align}
where $b_{i,a}=-\left\lfloor\left\langle\frac{p^i}{4}\right\rangle-\frac{ap^i}{q-1}\right\rfloor-\left\lfloor\left\langle\frac{3p^i}{4}\right\rangle-\frac{ap^i}{q-1}\right\rfloor-2\left\lfloor\left\langle\frac{-p^i}{2}\right\rangle+\frac{ap^i}{q-1}\right\rfloor$. Replacing $a$ by $-a$, we obtain the required identity.\\
For $(2)$, we replace $a$ by $a+\frac{q-1}{2}$ in \eqref{neweqn-03} and then using the fact that $$\prod_{i=0}^{r-1}\Gamma_p\left(\left\langle\frac{p^i}{2}\right\rangle\right)^2=(-1)^r\varphi(-1),$$ we obtain the required identity.\\
We now prove $(3)$. For $p>3$ and $t\in\mathbb{F}_q^\times$, we consider
	\begin{align*}
		B_t:&={_2}G_{2}\left[\begin{array}{cc}
			\frac{1}{3},  & \frac{2}{3} \vspace*{0.1cm}\\
			0, & 0
		\end{array}|t
		\right]_q\\
		&=\frac{-1}{q-1}\sum_{a=0}^{q-2}\overline{\omega}^a(t)(-p)^{\sum_{i=0}^{r-1}s_{i,a}}\prod_{i=0}^{r-1}\frac{\Gamma_p\left(\left\langle\frac{p^i}{3}-\frac{ap^i}{q-1}\right\rangle\right)\Gamma_p\left(\left\langle\frac{2p^i}{3}-\frac{ap^i}{q-1}\right\rangle\right)}{\Gamma_p\left(\left\langle\frac{p^i}{3}\right\rangle\right)\Gamma_p\left(\left\langle\frac{2p^i}{3}\right\rangle\right)}\\
		&\hspace*{.2cm}\times\Gamma_p\left(\left\langle\frac{ap^i}{q-1}\right\rangle\right)^2,
	\end{align*}
where $s_{i,a}=-\left\lfloor\left\langle\frac{p^i}{3}\right\rangle-\frac{ap^i}{q-1}\right\rfloor-\left\lfloor\left\langle\frac{2p^i}{3}\right\rangle-\frac{ap^i}{q-1}\right\rfloor-2\left\lfloor\frac{ap^i}{q-1}\right\rfloor$. Replacing $a$ by $a+\frac{q-1}{2}$, we deduce
\begin{align}\label{eq-29}
	B_t&=\frac{-\varphi(t)}{q-1}\sum_{a=0}^{q-2}\overline{\omega}^a(t)(-p)^{\sum_{i=0}^{r-1}t_{i,a}+1}\prod_{i=0}^{r-1}\frac{\Gamma_p\left(\left\langle\frac{p^i}{6}-\frac{ap^i}{q-1}\right\rangle\right)\Gamma_p\left(\left\langle\frac{5p^i}{6}-\frac{ap^i}{q-1}\right\rangle\right)}{\Gamma_p\left(\left\langle\frac{p^i}{3}\right\rangle\right)\Gamma_p\left(\left\langle\frac{2p^i}{3}\right\rangle\right)}\notag\\
	&\hspace*{.2cm}\times\Gamma_p\left(\left\langle\frac{-p^i}{2}+\frac{ap^i}{q-1}\right\rangle\right)^2,
\end{align}
where $t_{i,a}=-\left\lfloor\left\langle\frac{p^i}{6}\right\rangle-\frac{ap^i}{q-1}\right\rfloor-\left\lfloor\left\langle\frac{5p^i}{6}\right\rangle-\frac{ap^i}{q-1}\right\rfloor-2\left\lfloor\left\langle\frac{-p^i}{2}\right\rangle+\frac{ap^i}{q-1}\right\rfloor$. Using the fact that $\prod_{i=0}^{r-1}\Gamma_p\left(\left\langle\frac{p^i}{2}\right\rangle\right)^2=(-1)^r\varphi(-1)$ and Lemma \ref{lemma-3_2} with $t=3$ and $a=\frac{q-1}{2}$, we obtain  
\begin{align}\label{eqn-1}
	\prod_{i=0}^{r-1}\Gamma_p\left(\left\langle\frac{p^i}{3}\right\rangle\right)\Gamma_p\left(\left\langle\frac{2p^i}{3}\right\rangle\right)&=(-1)^r\varphi(-3)\notag\\
	&\hspace*{0.2cm}\times\prod_{i=0}^{r-1}\Gamma_p\left(\left\langle\frac{-p^i}{2}\right\rangle\right)^2\Gamma_p\left(\left\langle\frac{p^i}{6}\right\rangle\right)\Gamma_p\left(\left\langle\frac{5p^i}{6}\right\rangle\right).
\end{align}
Substituting \eqref{eqn-1} in \eqref{eq-29} and replacing $a$ by $-a$, we obtain identity $(3)$.\\
\end{proof}
\begin{cor}\label{cor-5}
	Let $p\geq3$ be a prime and $q=p^r,r\geq1$. 
	\begin{enumerate}
		\item We have
		\begin{align*}
			\sum_{t\in\mathbb{F}_q}\varphi(1-t){_{2}}G_{2}\left[\begin{array}{cc}
				\frac{1}{4},  & \frac{3}{4}\vspace*{0.1cm} \\
				\frac{1}{2}, & \frac{1}{2}
			\end{array}|t
			\right]_q=-\frac{1}{q}- \varphi(2).
		\end{align*} 
	\item Let $x\neq 0, 1$ and $\frac{x-1}{x}$ a square in $\mathbb{F}_q^\times$. If $\frac{x-1}{x}=a^2$ for some $a\in\mathbb{F}_q^\times$, then 
	\begin{align*}
\sum_{t\in\mathbb{F}_q}\varphi(1-t){_{2}}G_{2}\left[\begin{array}{cc}
			\frac{1}{4},  & \frac{3}{4} \vspace*{0.1cm}\\
			\frac{1}{2}, & \frac{1}{2}
		\end{array}|xt
		\right]_q=-\frac{\varphi(x)}{q}-\varphi(2)(\varphi(1+a)+\varphi(1-a)).
	\end{align*}
\item Let $x\neq 0$. If $\frac{x-1}{x}$ is not a square in $\mathbb{F}_q^\times$, then
\begin{align*}
	\sum_{t\in\mathbb{F}_q}\varphi(1-t){_{2}}G_{2}\left[\begin{array}{cc}
		\frac{1}{4},  & \frac{3}{4}\vspace*{0.1cm} \\
		\frac{1}{2}, & \frac{1}{2}
	\end{array}|xt
	\right]_q=-\frac{\varphi(x)}{q}.
\end{align*}
	\end{enumerate}
\end{cor}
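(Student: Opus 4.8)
The plan is to treat the three parts uniformly by evaluating the single sum
\[
S(x):=\sum_{t\in\mathbb{F}_q}\varphi(1-t)\,{_2}G_2\left[\begin{array}{cc}\frac14 & \frac34\\ \frac12 & \frac12\end{array}|\,xt\right]_q ,
\]
part $(1)$ being the case $x=1$. First I would apply Lemma~\ref{lemma_4.4}$(2)$ to each summand to push the lower parameters $\frac12,\frac12$ down to $0,0$, at the cost of a factor $\varphi(-xt)/q$. Using $\varphi(1-t)\varphi(-xt)=\varphi(-1)\varphi(x)\varphi(t(1-t))=\varphi(x)\varphi(t(t-1))$, this gives
\[
S(x)=\frac{\varphi(x)}{q}\sum_{t\in\mathbb{F}_q}\varphi(t(t-1))\,{_2}G_2\left[\begin{array}{cc}\frac14 & \frac34\\ 0 & 0\end{array}|\,xt\right]_q ,
\]
so the whole problem reduces to evaluating this last sum.

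Next I would collapse the $t$-summation. Expanding the inner ${_2}G_2$ through Definition~\ref{defin1} and interchanging the sum over $t$ with the sum over the Teichm\"uller exponent $a$, the inner sum becomes $\sum_{t}\varphi(t(t-1))\overline{\omega}^a(t)=\varphi(-1)J(\varphi\overline{\omega}^a,\varphi)$. Evaluating this Jacobi sum by Lemma~\ref{lemma2_2} and re-expressing the resulting Gauss sums via the Gross--Koblitz formula (Theorem~\ref{thm2_3}) turns the weight into a product of $p$-adic gamma values; these merge with the gamma factors already present (the $\Gamma_p(\langle ap^i/(q-1)\rangle)^2$ coming from the two lower $0$'s absorbs one power) and reassemble, up to an explicit constant and the isolated $a=0$ and $a=\frac{q-1}{2}$ terms, into a \emph{single} $p$-adic hypergeometric value at an explicit rational function of $x$, namely the value ${_2}G_2\left[\begin{array}{cc}\frac14 & \frac34\\ 0 & \frac12\end{array}|\,x\right]_q$ with one lower parameter raised to $\tfrac12$. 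This is exactly the mechanism of Lemmas~\ref{lemma4.1}--\ref{lemma4.2} used in the proofs of Theorems~\ref{MT-3}--\ref{MT-4}, run in reverse so as to \emph{absorb} the $\varphi(t(t-1))$ weight rather than to create it.

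It then remains to evaluate this single value. Here I would invoke Lemma~\ref{lemma_4.3} to trade ${_2}G_2\left[\begin{array}{cc}\frac14 & \frac34\\ 0 & \frac12\end{array}|\,x\right]_q$ for ${_3}G_3\left[\begin{array}{ccc}\frac14 & \frac12 & \frac34\\ 0 & \frac12 & \frac12\end{array}|\,x\right]_q-\frac{\varphi(x)}{q}$, and then recognise the ${_3}G_3$ as precisely the $d=4,\ k=2$ instance of the function in Theorem~\ref{MT-1} (upper row $\{\tfrac{j}{4}\}$, lower row $0,\tfrac12,\tfrac12$). Theorem~\ref{MT-1} identifies it, up to the explicit correction coming from the characters $\chi$ with $\chi^{2}=\varepsilon$, with the number of distinct zeros in $\mathbb{F}_q$ of an explicit polynomial of the form $y^{2}(1-y)^{2}=c(x)$. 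Reducing that count to a quadratic whose discriminant is a constant multiple of $\frac{x-1}{x}$ (its roots being $1\pm a$ when $\frac{x-1}{x}=a^2$, since then $(1+a)(1-a)=\tfrac1x$) is exactly what produces the trichotomy ``$\frac{x-1}{x}$ a nonzero square / a non-square / zero'' together with the terms $\varphi(2)$ and $\varphi(1\pm a)$; this can also be read directly as the trace of Frobenius of the associated elliptic curve.

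The hard part will be the bookkeeping in the collapsing step: matching the $(-p)$-exponents via the floor-sum Lemmas~\ref{lemma-3_3}--\ref{lemma-3_4}, tracking the quadratic-character signs (using \eqref{eq-13} for the $\Gamma_p(\langle p^i/2\rangle)$ factors), and correctly peeling off the degenerate $a=0$ and $a=\frac{q-1}{2}$ contributions, which are what ultimately yield the clean summands $-\frac1q$ and $-\frac{\varphi(x)}{q}$. The second delicate point is the boundary case $x=1$ (equivalently $a=0$ in the final quadratic, where $1+a$ and $1-a$ coincide in a double root): there the naive count is halved, which is precisely why part $(1)$ must be recorded separately from the $a\neq0$ square case of part $(2)$.
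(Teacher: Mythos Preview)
Your outline reaches the right destination but by a longer road than the paper's, and with one misstep at the end. The paper's proof is essentially three lines: the sum $S(x)$ is \emph{already} the $d=4$, $k=2$ instance of Theorem~\ref{MT-4} (upper row $\{\tfrac14,\tfrac34\}=\{\tfrac{j}{4}:j\ne 2\}$, lower row $\{c_1,c_2\}=\{\tfrac12,\tfrac12\}$), which gives $S(x)=-{_3}G_3\bigl[\tfrac14,\tfrac12,\tfrac34;\,0,\tfrac12,\tfrac12\mid x\bigr]_q$ at once; Lemma~\ref{lemma_4.3} then yields $S(x)=-{_2}G_2\bigl[\tfrac14,\tfrac34;\,0,\tfrac12\mid x\bigr]_q-\varphi(x)/q$; and the paper closes by quoting \cite[Theorem~1.2]{NS} for the value of that ${_2}G_2$. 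Your opening move via Lemma~\ref{lemma_4.4}(2) actually destroys the match with Theorem~\ref{MT-4} (after it, the lower row is $0,0$ and the weight is $\varphi(t(t-1))$, which fits neither Theorem~\ref{MT-3} nor~\ref{MT-4}), so you are forced to redo the Jacobi-sum collapse by hand, only to arrive at the same ${_2}G_2\bigl[\tfrac14,\tfrac34;\,0,\tfrac12\mid x\bigr]$ that the paper reaches in two lines.

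Your endgame---trading back up to ${_3}G_3$ via Lemma~\ref{lemma_4.3} and then invoking Theorem~\ref{MT-1} with $d=4$, $k=2$ to count zeros of $y^2(1-y)^2=\tfrac{1}{16x}$---is a legitimate self-contained alternative to citing \cite{NS}; the paper in fact performs exactly this computation in the Remark following Corollary~\ref{cor-7}. But your description of that count is not right: the quartic splits as $(y^2-y-\beta)(y^2-y+\beta)$ with $\beta^2=\tfrac{1}{16x}$, and it is the pair of discriminants $1\pm 4\beta$ whose \emph{product} equals $\tfrac{x-1}{x}$, not a single quadratic with roots $1\pm a$. Going from the resulting $n_q(\alpha)=2+\varphi(1-4\beta)+\varphi(1+4\beta)$ (when $\alpha$ is a nonzero square) to the stated $\varphi(2)\bigl(\varphi(1+a)+\varphi(1-a)\bigr)$ still requires a genuine case split on $\varphi(x)$ and some quadratic-character identities; your sketch does not cover this, and it is where the actual content of \cite[Theorem~1.2]{NS} lives.
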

\begin{proof}
Taking $d=4$ and $k=2$ in Theorem \ref{MT-4}, we have the following identity
\begin{align*}
	\sum_{t\in\mathbb{F}_q}\varphi(1-t){_{2}}G_{2}\left[\begin{array}{cc}
		\frac{1}{4},  & \frac{3}{4} \vspace*{0.1cm}\\
		\frac{1}{2}, & \frac{1}{2}
	\end{array}|xt
	\right]_q=-{_{3}}G_{3}\left[\begin{array}{ccc}
		\frac{1}{4},  &\frac{1}{2}, & \frac{3}{4}\vspace*{0.1cm} \\
		0,& \frac{1}{2}, & \frac{1}{2}
	\end{array}|x
	\right]_q.
\end{align*}
Using Lemma \ref{lemma_4.3}, we obtain
\begin{align*}
	\sum_{t\in\mathbb{F}_q}\varphi(1-t){_{2}}G_{2}\left[\begin{array}{cc}
		\frac{1}{4},  & \frac{3}{4} \vspace*{0.1cm}\\
		\frac{1}{2}, & \frac{1}{2}
	\end{array}|xt
	\right]_q=-{_{2}}G_{2}\left[\begin{array}{cc}
		\frac{1}{4},  & \frac{3}{4} \vspace*{0.1cm}\\
		0, & \frac{1}{2}
	\end{array}|x
	\right]_q -\frac{\varphi(x)}{q}.
\end{align*}
Using \cite[Theorem 1.2]{NS}, we obtain the required result.
\end{proof}
\begin{remark}
Using Lemma \ref{lemma_4.4} $(2)$ in Corollary \ref{cor-5} and replacing $x$ by $\frac{1}{x}$, we obtain \cite[Theorem 1.7]{NS}.
\end{remark}
\begin{cor}\label{cor-6}
Let $p>3$ be a prime and $q=p^r,r\geq1$. 
\begin{enumerate}
\item We have \begin{align*}
\sum_{t\in\mathbb{F}_q}\varphi(t(t-1)){_{2}}G_{2}\left[\begin{array}{cc}
		\frac{1}{3},  & \frac{2}{3} \\
		0, & 0
	\end{array}|t
	\right]_q=-1-q.
\end{align*}
\item Let $x\neq 0, 1$ be such that $\varphi(3x(1-x))=-1$. Then, we have
\begin{align*}
\sum_{t\in\mathbb{F}_q}\varphi(t(t-1)){_{2}}G_{2}\left[\begin{array}{cc}
		\frac{1}{3},  & \frac{2}{3} \\
		0, & 0
	\end{array}|\frac{t}{x}
	\right]_q=-1.
\end{align*}
\end{enumerate}
\end{cor}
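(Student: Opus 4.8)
The plan is to derive both identities as special instances of the summation formula in Theorem \ref{MT-3} with $d=3$, $k=1$, after which everything collapses to a single special-value computation for a ${_2}G_2$ function. First I would specialize Theorem \ref{MT-3} to $d=3$, $k=1$ (both odd, and $p>3$ guarantees $p\nmid dk(d-k)=6$). Here $\frac{d-k}{2}=1$, and a direct inspection shows that every intermediate parameter block in the statement is empty, so the top row collapses to $\frac13,\frac23$, the bottom row on the left to $0,\frac12$, and the bottom row on the right to $0,0$. Thus for every $z\in\mathbb{F}_q^\times$,
\begin{align*}
1+q\cdot{_2}G_2\left[\begin{array}{cc}\frac{1}{3}, & \frac{2}{3}\\ 0, & \frac{1}{2}\end{array}|z\right]_q=-\sum_{t\in\mathbb{F}_q}\varphi(t(t-1))\,{_2}G_2\left[\begin{array}{cc}\frac{1}{3}, & \frac{2}{3}\\ 0, & 0\end{array}|zt\right]_q.
\end{align*}
Taking $z=1$ produces the left side of part $(1)$, while replacing the free variable $z$ by $1/x$ (so that the right-hand argument becomes $t/x$) produces the left side of part $(2)$. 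Writing $W(z):={_2}G_2[\tfrac13,\tfrac23;0,\tfrac12\,|\,z]_q$, it therefore suffices to show $W(1)=1$ and $W(1/x)=0$ whenever $\varphi(3x(1-x))=-1$.

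Next I would identify $W(z)$ with a root count. Running the argument in the proof of Theorem \ref{MT-1} with $d=3$, $k=1$, one has $k_1=\gcd(1,q-1)=1$, so $\chi^{k_1}=\varepsilon$ forces $\chi=\varepsilon$ and every auxiliary character sum reduces to its trivial-character term. Comparing the two resulting expressions for the character sum $C(3,1,\alpha)$ then yields the clean identity $W(z)=n_q(\alpha)-1$, valid for \emph{all} $\alpha\in\mathbb{F}_q^\times$, where $\alpha=\frac{4}{27z}$ and $n_q(\alpha)$ is the number of distinct zeros of $h_\alpha(y)=y^2(1-y)-\alpha$ in $\mathbb{F}_q$. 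I would stress that this holds for every $z\neq 0$, with no requirement that $z$ lie in $4\cdot(\mathbb{F}_q^\times)^3$; this is precisely what lets the proof avoid any hypothesis on $q\bmod 3$.

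Finally I would read off the two special values from the cubic $y^3-y^2+\alpha$, whose roots are those of $h_\alpha$. Its discriminant is $\Delta=\alpha(4-27\alpha)=\frac{16(z-1)}{27z^2}$, so $\varphi(\Delta)=\varphi(3(z-1))$. For part $(2)$, setting $z=1/x$ gives $\varphi(\Delta)=\varphi\!\left(\frac{3(1-x)}{x}\right)=\varphi(3x(1-x))=-1$ by hypothesis; since a separable cubic over $\mathbb{F}_q$ with nonsquare discriminant factors as a linear times an irreducible quadratic, it has exactly one root, so $n_q=1$ and $W(1/x)=0$, and the sum equals $-(1+q\cdot 0)=-1$. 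For part $(1)$, $z=1$ gives the degenerate case $\Delta=0$, which I would handle by the explicit factorization $y^3-y^2+\tfrac{4}{27}=(y-\tfrac23)^2(y+\tfrac13)$: there are exactly two distinct roots, so $n_q=2$, $W(1)=1$, and the sum equals $-(1+q)$.

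The main obstacle is the middle step — establishing $W(z)=n_q(\alpha)-1$ uniformly in $z$. The tempting shortcut is to quote Theorem \ref{Theorem-1}, but that only delivers the identity when $z=4\lambda^3$ is actually attained, a genuine restriction when $q\equiv 1\pmod 3$. The resolution is to re-derive the relation at the level of the character sum $C(3,1,\alpha)$, where the link to $n_q(\alpha)$ is manifestly valid for every $\alpha$; the degenerate discriminant at $z=1$ is a minor secondary point dispatched by the explicit factorization above.
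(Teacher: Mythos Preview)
Your argument is correct. The first step---specializing Theorem~\ref{MT-3} to $d=3$, $k=1$---is exactly what the paper does. The difference lies in how you evaluate the resulting special values of $W(z)={_2}G_2[\tfrac13,\tfrac23;0,\tfrac12\,|\,z]_q$. The paper simply cites two external results (\cite[Corollary~1.3]{NS1} for $W(1)=1$ and \cite[Theorem~1.3]{SB1} for $W(1/x)=0$ under the sign hypothesis), whereas you derive both values from scratch by reusing the character-sum identity in the proof of Theorem~\ref{MT-1}: equating~\eqref{eqn-1.4} and~\eqref{neweqn-1.5} with $d=3$, $k=1$ (so $k_1=1$ and all correction sums collapse to the trivial-character term) gives $W(z)=n_q(\alpha)-1$ for every $z\neq 0$, and then the discriminant $\alpha(4-27\alpha)=\tfrac{16(z-1)}{27z^2}$ and the explicit factorization at $z=1$ read off the root counts directly. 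Your remark that the identity $W(z)=n_q(\alpha)-1$ holds for all $z$, not merely $z\in 4(\mathbb{F}_q^\times)^3$, is the key observation that lets you bypass any hypothesis on $q\bmod 3$; this is precisely the content buried in the cited references, and it is good that you made it explicit. The payoff is a fully self-contained proof inside the paper's own framework; the cost is re-running part of the Theorem~\ref{MT-1} computation, which the paper avoids by outsourcing.
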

\begin{proof}
Taking $d=3$ and $k=1$ in Theorem \ref{MT-3}, we find that 
\begin{align*}
-\sum_{t\in\mathbb{F}_q}\varphi(t(t-1)){_{2}}G_{2}\left[\begin{array}{cc}
		\frac{1}{3},  & \frac{2}{3} \\
		0, & 0
	\end{array}|\frac{t}{x}
	\right]_q=1+q\cdot{_{2}}G_{2}\left[\begin{array}{cc}
		\frac{1}{3},  &\frac{2}{3}\vspace*{0.1cm} \\
		0, & \frac{1}{2}
	\end{array}|\frac{1}{x}
	\right]_q.
\end{align*}
We obtain $(1)$ and $(2)$ using \cite[Corollary 1.3]{NS1} and \cite[Theorem 1.3]{SB1}, respectively.
\end{proof}
\begin{cor}\label{cor-7}
Let $p>3$ be a prime and $q=p^r,r\geq1$. Let $q\equiv 1\pmod3$ and $\chi_3$ be a character of order $3$. 
\begin{enumerate}
\item We have \begin{align*}
\sum_{t\in\mathbb{F}_q^\times}\varphi(t(t-1)){_{2}}F_{1}\left(\begin{array}{cc}
		\chi_3,  & \overline{\chi_3} \\
		 & \varepsilon
	\end{array}|\frac{1}{t}
	\right)_q=1+\frac{1}{q}.
\end{align*}
\item Let $x\neq 0,1$ be such that $\varphi(3x(1-x))=-1$. Then, we have
\begin{align*}
\sum_{t\in\mathbb{F}_q^\times}\varphi(t(t-1)){_{2}}F_{1}\left(\begin{array}{cc}
		\chi_3,  & \overline{\chi_3} \\
		 & \varepsilon
	\end{array}|\frac{x}{t}\right)_q=\frac{1}{q}.
\end{align*}
\end{enumerate}
\end{cor}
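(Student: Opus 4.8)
The plan is to deduce Corollary~\ref{cor-7} directly from Corollary~\ref{cor-6} by trading the $p$-adic function ${_2}G_2[\tfrac13,\tfrac23;0,0\,|\,\cdot]_q$ for Greene's Gaussian hypergeometric function ${_2}F_1(\chi_3,\overline{\chi_3};\varepsilon\,|\,\cdot)_q$; this conversion is available precisely because the hypothesis $q\equiv1\pmod3$ guarantees a cubic character $\chi_3=\omega^{(q-1)/3}$ over $\mathbb{F}_q$. First I would rewrite Greene's function in terms of Gauss sums. Starting from the binomial definition \eqref{eq-0}, applying Lemma~\ref{lemma2_2} and \eqref{eq-1}, and using Lemma~\ref{lemma2_1} together with $\chi_3(-1)=1$ and $g(\chi_3)g(\overline{\chi_3})=q$, every $\delta$-term drops out because $\chi_3,\overline{\chi_3}\neq\varepsilon$. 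This produces the exact closed form
\begin{align*}
{_2}F_1\left(\begin{array}{cc} \chi_3, & \overline{\chi_3} \\ & \varepsilon \end{array}| s\right)_q = \frac{1}{q^2(q-1)}\sum_{\chi\in\widehat{\mathbb{F}_q^{\times}}} g(\chi_3\chi)\,g(\overline{\chi_3}\chi)\,g(\overline{\chi})^2\,\chi(s),
\end{align*}
valid for all $\chi$ with no exceptional contributions.

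Next I would expand ${_2}G_2[\tfrac13,\tfrac23;0,0\,|\,t]_q$ from Definition~\ref{defin1} and apply the Gross--Koblitz formula (Theorem~\ref{thm2_3}) to replace each ratio of $p$-adic gamma values by Gauss sums, using that $\tfrac13$ and $\tfrac23$ correspond to $\chi_3=\omega^{(q-1)/3}$ and $\overline{\chi_3}$ when $3\mid q-1$, while the lower entries $0,0$ correspond to $\varepsilon$. Writing $\chi=\omega^a$ and invoking $\overline{\omega}^a(t)=\chi(1/t)$, the $p$-adic sum over $a$ collapses onto the same Gauss-sum kernel displayed above, but evaluated at $1/t$. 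After matching the accumulated powers of $-p$ against the normalizing sums $g(\chi_3),g(\overline{\chi_3}),g(\varepsilon)$, I expect to arrive at the conversion identity
\begin{align*}
{_2}G_2\left[\begin{array}{cc} \frac{1}{3}, & \frac{2}{3} \\ 0, & 0 \end{array}| t\right]_q = -q\cdot{_2}F_1\left(\begin{array}{cc} \chi_3, & \overline{\chi_3} \\ & \varepsilon \end{array}| \tfrac{1}{t}\right)_q.
\end{align*}

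Finally I would substitute this identity into Corollary~\ref{cor-6}. In part~(1), replacing ${_2}G_2[\cdots|t]_q$ by $-q\,{_2}F_1(\cdots|1/t)_q$ turns the relation $\sum_{t}\varphi(t(t-1)){_2}G_2[\cdots|t]_q=-1-q$ into $-q\sum_{t\in\mathbb{F}_q^{\times}}\varphi(t(t-1)){_2}F_1(\cdots|1/t)_q=-1-q$, and dividing by $-q$ yields exactly the value $1+\tfrac1q$ of Corollary~\ref{cor-7}(1). Running the same substitution through Corollary~\ref{cor-6}(2), where the argument $t/x$ is sent to $x/t$, gives Corollary~\ref{cor-7}(2) with value $\tfrac1q$. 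I expect the main obstacle to be the second step: carefully tracking the $(-p)$-exponents and the $(-1)$-factors through Gross--Koblitz to pin down both the multiplicative constant $-q$ and the argument inversion $t\mapsto 1/t$, and verifying that the degenerate characters (the index $a=0$, and the values $\chi\in\{\varepsilon,\chi_3,\overline{\chi_3}\}$ where a Gauss sum collapses) contribute no residual additive term. The perfect numerical agreement $-q(1+\tfrac1q)=-1-q$ and $-q\cdot\tfrac1q=-1$ serves as a strong a posteriori check that the clean factor $-q$, with no additive correction, is indeed correct.
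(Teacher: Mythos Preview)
Your plan is correct and follows the same overall route as the paper: establish the conversion identity
\[
{_2}G_2\!\left[\begin{array}{cc}\tfrac13,&\tfrac23\\0,&0\end{array}\Big|\,u\right]_q=-q\cdot{_2}F_1\!\left(\begin{array}{cc}\chi_3,&\overline{\chi_3}\\ &\varepsilon\end{array}\Big|\,\tfrac1u\right)_q
\]
and then feed it into Corollary~\ref{cor-6}. The only difference is how the conversion is obtained. You propose to unfold both sides into Gauss sums via Definition~\ref{defin1} and Gross--Koblitz and match the kernels by hand; the paper instead quotes \cite[Lemma~3.3]{mccarthy2} to pass from ${_2}G_2[\cdots|u]_q$ to ${_2}F_1(\cdots|1/u)_q^{\ast}$, then \cite[Proposition~2.5]{mccarthy3} to pass from the starred function to Greene's ${_2}F_1$, and finally \eqref{eq-4} to evaluate $\binom{\overline{\chi_3}}{\varepsilon}^{-1}=-q$. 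This three-line route delivers both the argument inversion $u\mapsto 1/u$ and the exact constant $-q$ with no bookkeeping of $(-p)$-exponents or degenerate characters, so the ``main obstacle'' you anticipate simply does not arise. Your first-principles derivation would of course reprove those cited statements in this special case, which is fine but unnecessary here.
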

\begin{proof}
	For $x,t\in\mathbb{F}_q^\times$, \cite[Lemma 3.3]{mccarthy2} and \cite[Proposition 2.5]{mccarthy3} yield
	\begin{align*}
		{_{2}}G_{2}\left[\begin{array}{cc}
			\frac{1}{3},  & \frac{2}{3} \\
			0, & 0
		\end{array}|\frac{t}{x}\right]_q
	&={_{2}}F_{1}\left(\begin{array}{cc}
			\chi_3,  & \overline{\chi_3} \\
			& \varepsilon
		\end{array}|\frac{x}{t}\right)_q^*\\
	&=\binom{\overline{\chi_3}}{\varepsilon}^{-1}{_{2}}F_{1}\left(\begin{array}{cc}
		\chi_3,  & \overline{\chi_3} \\
		& \varepsilon
	\end{array}|\frac{x}{t}\right)_q\\
&=-q\cdot{_{2}}F_{1}\left(\begin{array}{cc}
	\chi_3,  & \overline{\chi_3} \\
	& \varepsilon
\end{array}|\frac{x}{t}\right)_q,
	\end{align*}
	where we obtain the last equailty using \eqref{eq-4}. Now,
	\begin{align*}
		\sum_{t\in\mathbb{F}_q}\varphi(t(t-1)){_{2}}G_{2}\left[\begin{array}{cc}
			\frac{1}{3},  & \frac{2}{3} \\
			0, & 0
		\end{array}|\frac{t}{x}\right]_q&=\sum_{t\in\mathbb{F}_q^\times}\varphi(t(t-1)){_{2}}G_{2}\left[\begin{array}{cc}
		\frac{1}{3},  & \frac{2}{3} \\
		0, & 0
	\end{array}|\frac{t}{x}\right]_q \\
&=-q\cdot\sum_{t\in\mathbb{F}_q^\times}\varphi(t(t-1)){_{2}}F_{1}\left(\begin{array}{cc}
	\chi_3,  & \overline{\chi_3} \\
	& \varepsilon
\end{array}|\frac{x}{t}\right)_q.
	\end{align*}
	 Using Corollary \ref{cor-6}, we obtain the desired result.
\end{proof}
\begin{remark}
	Combining \eqref{eqn-1.4} and \eqref{neweqn-1.5} and taking $d=4$ and $k=2$, we obtain
	\begin{align*}
	{_{3}}G_{3}\left[\begin{array}{ccc}
	\frac{1}{4},  &\frac{1}{2}, & \frac{3}{4}\vspace*{0.1cm} \\
	0, & \frac{1}{2}, & \frac{1}{2}
\end{array}|\frac{1}{16\alpha}
\right]_q&=n_q(\alpha)-1+\frac{(1-q)\varphi(\alpha)}{q},
\end{align*}	 
where $n_q(\alpha)$ is the number of distinct zeros of the polynomial $h_\alpha(y)=y^4-2y^3+y^2-\alpha$. We can check that if $\alpha$ is not a square in $\mathbb{F}_q$ then $h_\alpha(y)$ has no zero in $\mathbb{F}_q$ and hence, $n_q(\alpha)=0$. 
Applying Lemma \ref{lemma_4.3}, we deduce that 
	\begin{align*}
		{_{2}}G_{2}\left[\begin{array}{cc}
		\frac{1}{4},  & \frac{3}{4} \vspace*{0.1cm}\\
		0, & \frac{1}{2}
	\end{array}|\frac{1}{16\alpha}
	\right]_q =0,
	\end{align*}
	which partially gives \cite[Theorem 1.2]{NS}.
\end{remark}
We have proved all the required lemmas and corollaries to prove the identities for the trace of Frobenius of elliptic curves. Let $E_1$ and $E_2$ be elliptic curves over $\mathbb{F}_q$ given by
\begin{align*}
	&	E_1:y^2=x^3+fx^2+gx, f\neq 0,\\
	&	E_2:y^2=x^3+ax+b \text{ with } j(E_2)\neq 0, 1728.
\end{align*}
In \cite{mccarthy2}, McCarthy introduced $p$-adic hypergeometric functions and expressed the trace of the Frobenius endomorphism on $E_2$ as a special value of the function ${_2}G_2[\cdots]$ as given in the following theorem.
\begin{theorem}\emph{(\cite[Theorem 1.2]{mccarthy2}).}\label{theorem-4}
	Let $p>3$ be prime. Consider an elliptic curve $E_2/\mathbb{F}_p$ of the form $E_2:y^2=x^3+ax+b \text{ with } j(E_2)\neq 0, 1728.$ Then
	\begin{align*}
	a_p(E_2)=p\cdot\varphi(b)\cdot{_2}G_{2}\left[\begin{array}{cc}
	\frac{1}{4},  & \frac{3}{4} \vspace*{0.1cm}\\
	\frac{1}{3}, & \frac{2}{3}
	\end{array}|\frac{-27b^2}{4a^3}
	\right]_p.
	\end{align*}
\end{theorem}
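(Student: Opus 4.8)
The plan is to start from the elementary point count and reduce the theorem to the evaluation of a single cubic character sum. Since $j(E_2)\neq 0,1728$ forces $a\neq 0$ and $b\neq 0$, I would first record that counting the affine solutions of $y^2=x^3+ax+b$ with the quadratic character $\varphi$ gives $\#E_2(\mathbb{F}_p)=p+1+\sum_{x\in\mathbb{F}_p}\varphi(x^3+ax+b)$, so that $a_p(E_2)=-\sum_{x\in\mathbb{F}_p}\varphi(x^3+ax+b)$. Next I would normalize the cubic by the linear substitution $x\mapsto cu$ with $c=\tfrac{-3b}{2a}$, which is a bijection of $\mathbb{F}_p$; a short computation gives $x^3+ax+b=\tfrac{b}{2}(t\,u^3-3u+2)$, where $t=\tfrac{-27b^2}{4a^3}$ is exactly the target argument. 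Hence $a_p(E_2)=-\varphi(\tfrac{b}{2})\sum_{u}\varphi(g_t(u))$ with $g_t(u)=t\,u^3-3u+2$ depending only on $t$, isolating the dependence on $a,b$ into a single quadratic-character factor.

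The heart of the argument is to convert $\sum_{u}\varphi(g_t(u))$ into a finite sum of products of Gauss sums. I would do this by writing the point count of $y^2=g_t(u)$ through the additive character $\theta$ and the orthogonality relation \eqref{eq-2}: expanding $\tfrac1q\sum_{z}\sum_{u,y}\theta(z(y^2-g_t(u)))$, the $y$-sum collapses to the quadratic Gauss sum $g(\varphi)\varphi(z)$ (with Lemma~\ref{lemma2_1} controlling $g(\varphi)g(\overline{\varphi})$), while the remaining cubic exponential sum $\sum_{u}\theta(-z\,g_t(u))$ is expanded using Lemma~\ref{lemma2_02}, which rewrites $\theta$ as a weighted sum of multiplicative characters. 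After applying the character orthogonality \eqref{eq-2} and consolidating with the Jacobi--Gauss relation of Lemma~\ref{lemma2_2} together with \eqref{eq-1}, the whole expression becomes a sum over a single multiplicative character of a product of a few Gauss sums whose arguments carry the ``order $2$, order $3$, order $4$'' data attached to the curve.

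Finally I would pass to the $p$-adic side. Replacing each multiplicative character by a power of the Teichm\"uller character $\omega$ and applying the Gross--Koblitz formula (Theorem~\ref{thm2_3}) converts every Gauss sum into a product of $p$-adic gamma values times a power of $-p$. I would then reorganize these $\Gamma_p$-products into the exact defining sum of $_{2}G_{2}\!\left[\tfrac14,\tfrac34;\tfrac13,\tfrac23\mid t\right]$ from Definition~\ref{defin1}, using the reflection and multiplication identities for $\Gamma_p$ recorded in Lemmas~\ref{lemma-3_1}, \ref{lemma-3_2}, and~\ref{lemma-3_5} to produce the fractions $\tfrac14,\tfrac34$ (arising from the quadratic character combined with the cube) and $\tfrac13,\tfrac23$ (arising from the cube root), and Lemmas~\ref{lemma-3_3} and~\ref{lemma-3_4} to reconcile the floor-function exponents of $-p$. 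The leftover scalars, together with the factor $\varphi(\tfrac{b}{2})$ from the first paragraph, should assemble into the prefactor $p\cdot\varphi(b)$.

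The main obstacle is the cubic term. Because $x^3+ax+b$ does not factor over $\mathbb{F}_p$ in general, and because when $p\not\equiv 1\pmod 3$ there is no multiplicative character of order $3$, one cannot introduce an honest cubic Jacobi sum, and the classical Gaussian-hypergeometric evaluation is available only under a congruence condition on $p$. The entire point of routing the computation through Gross--Koblitz and $\Gamma_p$ is that the ``order $3$'' and ``order $4$'' information survives only as the rational parameters $\tfrac13,\tfrac23,\tfrac14,\tfrac34$ inside the $p$-adic gamma function, which are defined for every $p>3$. Keeping the bookkeeping of the $(-p)$-exponents exact, so as to land on McCarthy's normalization rather than a version off by a power of $p$ or a sign, is the delicate part of the argument.
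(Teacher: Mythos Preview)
The paper does not prove this statement at all: Theorem~\ref{theorem-4} is quoted verbatim from \cite[Theorem~1.2]{mccarthy2} and is used as a black box in the proof of Theorem~\ref{MT-5}. There is therefore no ``paper's own proof'' to compare your proposal against; the remark following Theorem~\ref{theorem-3} only records that the result was checked to extend from $\mathbb{F}_p$ to $\mathbb{F}_q$ in \cite{BS1}.

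That said, your outline is broadly the strategy McCarthy himself uses. The normalization in your first paragraph is correct: with $c=-3b/(2a)$ one has $c^3u^3+acu+b=\tfrac{b}{2}(tu^3-3u+2)$ and $t=-27b^2/(4a^3)$, so $a_p(E_2)=-\varphi(b/2)\sum_u\varphi(tu^3-3u+2)$. The subsequent plan---expand via $\theta$ and Lemma~\ref{lemma2_02}, collapse with Lemmas~\ref{lemma2_1} and \ref{lemma2_2}, then push everything through Gross--Koblitz (Theorem~\ref{thm2_3}) and repackage with Lemmas~\ref{lemma-3_1}--\ref{lemma-3_5}---is the right architecture, and your final paragraph correctly identifies why the $p$-adic route is needed when $p\not\equiv 1\pmod 3$. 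What is missing from the sketch is the mechanism that actually produces the parameters $\tfrac14,\tfrac34$: these do not come from an order-$4$ character but from a Hasse--Davenport/duplication step (equivalently, Lemma~\ref{lemma-3_1} with $t=2$) that merges the quadratic Gauss sum with the cube; similarly, the $\tfrac13,\tfrac23$ arise from Lemma~\ref{lemma-3_2} with $t=3$ applied to $g(\overline{\omega}^{3a})$ rather than from an honest cubic character. If you want to turn your proposal into a complete proof, the place to invest effort is in writing out exactly which Gauss-sum product you land on after the orthogonality step and then tracking the $(-p)$-exponents through Lemmas~\ref{lemma-3_3} and \ref{lemma-3_4}; the final sign and the stray $\varphi(2)$ do cancel, but only after that bookkeeping is done carefully.
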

In \cite{BS1}, the second author with Saikia did the same for the elliptic curve $E_1$.
\begin{theorem}\emph{(\cite[Theorem 3.5]{BS1}).}\label{theorem-3}
	Let $p$ be an odd prime and $q=p^r,r\geq1$. The trace of Frobenius on $E_1$ is given  by
	\begin{align*}
		a_q(E_1)=q\cdot\varphi(-fg)\cdot{_2}G_{2}\left[\begin{array}{cc}
			\frac{1}{2},  & \frac{1}{2}\vspace*{0.1cm} \\
			\frac{1}{4}, & \frac{3}{4}
		\end{array}|\frac{4g}{f^2}
		\right]_q.
	\end{align*}
\end{theorem}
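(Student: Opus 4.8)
The plan is to reduce the trace of Frobenius to a single quadratic-character sum, convert that sum to Gauss sums, and finally recognise the resulting $p$-adic gamma product as the asserted value of $_2G_2[\tfrac12,\tfrac12;\tfrac14,\tfrac34\,|\,4g/f^2]$ via the Gross--Koblitz formula.

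First, counting the affine points of $E_1$ fibrewise in $y$ gives $\#E_1(\mathbb{F}_q)=q+1+\sum_{x\in\mathbb{F}_q}\varphi(x^3+fx^2+gx)$, since for each $x$ the equation has $1+\varphi(x^3+fx^2+gx)$ solutions in $y$. Hence $a_q(E_1)=-\sum_{x\in\mathbb{F}_q}\varphi\big(x(x^2+fx+g)\big)$. I would then factor out $x$ and pass to $u=x^{-1}$, using $\varphi(x)=\varphi(u)$ and $\varphi(x^2+fx+g)=\varphi(gu^2+fu+1)$, to rewrite this as $a_q(E_1)=-\sum_{u\in\mathbb{F}_q^\times}\varphi(u)\varphi(gu^2+fu+1)$. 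The substitution $u=(2/f)v$ scales the quadratic to $\tfrac{4g}{f^2}v^2+2v+1$, which surfaces the intended argument $t=4g/f^2$ (note $1-t=(f^2-4g)/f^2$ is the normalised discriminant) and produces an overall factor $\varphi(2f)$; together with the Gauss-sum data below, this contributes to the eventual prefactor $q\,\varphi(-fg)$.

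Next I would expand the quadratic character of the quadratic using the Gauss- and Jacobi-sum machinery of Section 2. Writing $\varphi$ of the product as a convolution over characters and applying Lemma \ref{lemma2_2} and Lemma \ref{lemma2_1}, the sum should collapse to a short combination of Gauss-sum ratios involving $\varphi$ and its companions, with the discriminant factor $(f^2-4g)/f^2$ governing which characters appear. At this stage I would invoke the Gross--Koblitz formula (Theorem \ref{thm2_3}) to replace every Gauss sum by a product of $p$-adic gamma values and a power of $\pi$, collecting the powers of $-p$ arising from the floor functions exactly in the shape dictated by Definition \ref{defin1}.

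Finally, the core of the argument is to match the resulting $p$-adic gamma product, term by term in the index $a$ and in $i$, against Definition \ref{defin1} with parameters $a_1=a_2=\tfrac12$, $b_1=\tfrac14$, $b_2=\tfrac34$, and argument $4g/f^2$. The reflection identities of Lemma \ref{lemma-3_5}, especially the $\langle\tfrac12\pm\tfrac{a}{q-1}\rangle$ relation \eqref{eq-13} and the $\langle 1-\tfrac{a}{q-1}\rangle$ relation \eqref{eq-12}, are what convert the quadratic Gauss-sum data into the shifts $\tfrac14,\tfrac34$ in the bottom row. I expect the main obstacle to be precisely this bookkeeping: tracking the powers of $-p$ and the sign $\overline{\omega}^a(-1)=(-1)^a$ so that the character prefactor emerges exactly as $q\,\varphi(-fg)$, and separately checking the degenerate contributions, namely the terms where an intermediate character is trivial and the borderline case $f^2=4g$ in which the quadratic factor is a perfect square, do not disturb the stated identity.
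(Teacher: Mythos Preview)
This theorem is not proved in the present paper; it is quoted verbatim from \cite[Theorem~3.5]{BS1} and used as a black box in the proof of Theorem~\ref{MT-7}. So there is no ``paper's own proof'' to compare your attempt against.

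That said, your outline is a reasonable sketch of how the result is obtained in \cite{BS1}: start from the character-sum expression $a_q(E_1)=-\sum_x\varphi(x(x^2+fx+g))$, normalise variables to expose the argument $4g/f^2$, expand via Gauss sums, and identify the result with Definition~\ref{defin1} through Gross--Koblitz and the gamma identities of Lemmas~\ref{lemma-3_1}--\ref{lemma-3_5}. The middle step you leave vague---``expand the quadratic character of the quadratic using the Gauss- and Jacobi-sum machinery''---is where all the actual work lies, and in \cite{BS1} this is done not by a Jacobi-sum convolution but by writing $\varphi=\overline{\omega}^{(q-1)/2}$, applying Lemma~\ref{lemma2_02} and orthogonality directly to the point count, and then invoking the multiplication formula for $\Gamma_p$ (your Lemma~\ref{lemma-3_2} with $t=2,4$) to split the resulting $\Gamma_p(\langle\tfrac{2ap^i}{q-1}\rangle)$ and $\Gamma_p(\langle\tfrac{-4ap^i}{q-1}\rangle)$ factors into the $\tfrac12,\tfrac12;\tfrac14,\tfrac34$ pattern. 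Your proposed route through Lemma~\ref{lemma-3_5} alone would not produce the quarter-shifts; the duplication/quadruplication identities are essential.
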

\begin{remark}
McCarthy proved Theorem \ref{theorem-4} for $\mathbb{F}_p$ for all primes $p>3$. In \cite{BS1}, it was verified that Theorem \ref{theorem-4} is true for $\mathbb{F}_q$, where $q=p^r,r\geq1$ and $p>3$.
\end{remark}
\begin{proof}[Proof of Theorem \ref{MT-5}]
	Taking $d=4$ and $k=3$ in Theorem \ref{MT-4}, we have
	\begin{align*}
			\sum_{t\in\mathbb{F}_q}\varphi(1-t){_2}G_{2}\left[\begin{array}{cc}
				\frac{1}{4},  & \frac{3}{4} \vspace*{0.1cm}\\
					\frac{1}{3}, & \frac{2}{3}
			\end{array}|x_0t
			\right]_q&=\sum_{t\in\mathbb{F}_q^\times}\varphi(1-t){_2}G_{2}\left[\begin{array}{cc}
				\frac{1}{4},  & \frac{3}{4} \vspace*{0.1cm}\\
				\frac{1}{3}, & \frac{2}{3}
			\end{array}|x_0t
			\right]_q\\
			&=-{_3}G_{3}\left[\begin{array}{ccc}
			\frac{1}{4},  & \frac{1}{2},& \frac{3}{4} \vspace*{0.1cm}\\
			0, &	\frac{1}{3}, & \frac{2}{3}
		\end{array}|x_0
		\right]_q.
	\end{align*}
Since $q\not\equiv1\pmod3$, therefore $t\mapsto t^{-3}$ is an automorphism on $\mathbb{F}_q^\times$ and we obtain
	\begin{align*}
	\sum_{t\in\mathbb{F}_q^\times}\varphi(t(t^3-1)){_2}G_{2}\left[\begin{array}{cc}
		\frac{1}{4},  & \frac{3}{4} \vspace*{0.1cm}\\
		\frac{1}{3}, & \frac{2}{3}
	\end{array}|\frac{x_0}{t^3}
	\right]_q=-{_3}G_{3}\left[\begin{array}{ccc}
		\frac{1}{4},  & \frac{1}{2},& \frac{3}{4} \vspace*{0.1cm}\\
		0, &	\frac{1}{3}, & \frac{2}{3}
	\end{array}|x_0
	\right]_q.
\end{align*}
Taking $x_0=\frac{-27b^2}{4}$ with $b\in\mathbb{F}_q^\times$ yields
	\begin{align*}
	\sum_{t\in\mathbb{F}_q^\times}\varphi(t(t^3-1)){_2}G_{2}\left[\begin{array}{cc}
		\frac{1}{4},  & \frac{3}{4} \vspace*{0.1cm}\\
		\frac{1}{3}, & \frac{2}{3}
	\end{array}|\frac{-27b^2}{4t^3}
	\right]_q=-{_3}G_{3}\left[\begin{array}{ccc}
		\frac{1}{4},  & \frac{1}{2},& \frac{3}{4} \vspace*{0.1cm}\\
		0, &	\frac{1}{3}, & \frac{2}{3}
	\end{array}|\frac{-27b^2}{4}
	\right]_q.
\end{align*}
Note that for $t,b\in\mathbb{F}_q^\times$, we have $j(E_{t,b})\neq0,1728$. Hence, using Theorem \ref{theorem-4}, we obtain the desired result.
\end{proof}
\begin{proof}[Proof of Theorem \ref{MT-7}]
	We consider a character sum $A_{x_0}$ and then using Lemma \ref{lemma_4.4} (1), we have
	\begin{align}\label{eqn-5.2}
		A_{x_0}&:=\sum_{t\in\mathbb{F}_q}\varphi(1-t){_{2}}G_{2}\left[\begin{array}{cc}
			\frac{1}{4}, & \frac{3}{4}\vspace*{0.1cm}\\
			\frac{1}{2},  & \frac{1}{2} \\
		\end{array}|x_0t
		\right]_q\notag\\
		&=\sum_{t\in\mathbb{F}_q^\times}\varphi(1-t){_{2}}G_{2}\left[\begin{array}{cc}
			\frac{1}{4}, & \frac{3}{4}\vspace*{0.1cm}\\
			\frac{1}{2},  & \frac{1}{2} \\
		\end{array}|x_0t
		\right]_q\notag\\
		&=\sum_{t\in\mathbb{F}_q^\times}\varphi(1-t){_{2}}G_{2}\left[\begin{array}{cc}
			\frac{1}{2},  & \frac{1}{2}\vspace*{0.1cm} \\
			\frac{1}{4}, & \frac{3}{4}\\
		\end{array}|\frac{1}{x_0t}
		\right]_q.
	\end{align}
	Let $E_{f,t}:y^2=x^3+fx^2+\frac{x}{t}$, where $f,t\in\mathbb{F}_q^\times$. Then by Theorem \ref{theorem-3}, we have 
	\begin{align}\label{neweqn-04}
		a_q(E_{f,t})=q\cdot\varphi(-ft)\cdot{_{2}}G_{2}\left[\begin{array}{cc}
			\frac{1}{2},  & \frac{1}{2}\vspace*{0.1cm} \\
			\frac{1}{4}, & \frac{3}{4}\\
		\end{array}|\frac{4}{f^2t}
		\right]_q.
	\end{align}
	Taking $x_0=\frac{f^2}{4}$ in \eqref{eqn-5.2} and using \eqref{neweqn-04}, we have
	\begin{align}\label{neweqn-05}
		A_{\frac{f^2}{4}}=\frac{\varphi(f)}{q}\sum_{t\in\mathbb{F}_q^\times}\varphi(t(t-1))a_q(E_{f,t})
		\end{align}
	Combining \eqref{neweqn-05} and the expression for the character sum $A_{\frac{f^2}{4}}$ from Corollary \ref{cor-5}, we obtain the required result.
\end{proof}
Let $a\in\mathbb{F}_q$ be such that $a^3\neq1$. Then the Hessian curve over $\mathbb{F}_q$ is given by the following cubic equation
\begin{align*}
C_a(\mathbb{\mathbb{F}}_q):x^3+y^3+1=3axy.
\end{align*}
In \cite{BS2}, the second author with Saikia expressed the number of $\mathbb{F}_q$-points on $C_a(\mathbb{F}_q)$ in terms of $_{2}G_{2}[\cdots]$ as given in the following theorem.
\begin{theorem}\emph{(\cite[Theorem 3.3]{BS2}).}\label{theorem-2}
	Let $a\in\mathbb{F}_q^\times$ such that $a^3\neq1$. Let $p>3$ be a prime and $q=p^r,r>1$. Then we have
	\begin{align*}
	\#C_a(\mathbb{F}_q)=\alpha-1+q-q\cdot\varphi(-3a)\cdot{_2}G_{2}\left[\begin{array}{cc}
	\frac{1}{2},  & \frac{1}{2} \vspace*{0.1cm}\\
	\frac{1}{6}, & \frac{5}{6}
	\end{array}|\frac{1}{a^3}
	\right]_q,
	\end{align*}
	where $\alpha=\left\{\begin{array}{cc}
	5-6\varphi(-3), & \hbox{if $q\equiv 1\pmod3$};\\
	1, & \hbox{if $q\not\equiv 1\pmod 3$}.
	\end{array}\right.$
\end{theorem}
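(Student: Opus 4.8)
The plan is to realize $\#C_a(\mathbb{F}_q)$ as the number of projective points on the Fermat‑type cubic $X^3+Y^3+Z^3=3aXYZ$, decomposed as the affine count on $Z=1$ together with the points at infinity $Z=0$. The points at infinity satisfy $X^3+Y^3=0$; normalizing $Y=1$ these are the cube roots of $-1$, of which there are $3$ when $q\equiv1\pmod 3$ and exactly $1$ when $q\not\equiv1\pmod 3$. This trichotomy is the ultimate source of the case distinction in $\alpha$, and I would track it throughout. For the affine part I would invoke the additive‑character identity used in the proof of Theorem \ref{MT-2}, writing
\begin{align*}
\#\{(x,y)\in\mathbb{F}_q^2:x^3+y^3+1-3axy=0\}=\frac{1}{q}\sum_{z\in\mathbb{F}_q}\sum_{x,y\in\mathbb{F}_q}\theta\left(z(x^3+y^3+1-3axy)\right),
\end{align*}
where the $z=0$ term contributes $q$, accounting for the $+q$ in the statement.

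Next I would isolate the $z\neq0$ contribution and split it according to whether $x$ and $y$ vanish. The degenerate pieces (namely $x=y=0$, and $x=0,\,y\neq0$ together with its mirror) are elementary: summing the additive character over $z\in\mathbb{F}_q^\times$ gives $-1$ unless the coefficient of $z$ vanishes, in which case it gives $q-1$, and counting how often $y^3+1=0$ again invokes the number of cube roots of $-1$. These boundary terms, combined with the points at infinity, are what assemble into the constant $\alpha-1$.

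The heart of the argument is the main range $x,y,z\in\mathbb{F}_q^\times$. Here I would expand each of the four factors $\theta(zx^3)$, $\theta(zy^3)$, $\theta(z)$, $\theta(-3azxy)$ by Lemma \ref{lemma2_02}, producing a quadruple sum over characters $T^{l},T^{m},T^{n},T^{s}$ weighted by Gauss sums. Carrying out the sums over $x,y,z\in\mathbb{F}_q^\times$ and applying the orthogonality relation \eqref{eq-2} imposes the three congruences $3l+s\equiv0$, $3m+s\equiv0$, and $l+m+n+s\equiv0\pmod{q-1}$. Eliminating $s$ and $n$ leaves a sum constrained by $3(l-m)\equiv0\pmod{q-1}$: when $q\not\equiv1\pmod3$ this forces $m=l$ and yields a clean one‑parameter Gauss‑sum expression, whereas when $q\equiv1\pmod3$ two extra values $m=l+\tfrac{q-1}{3},\,l+\tfrac{2(q-1)}{3}$ survive and contribute additional cubic‑character terms. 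I expect this bifurcation, together with the boundary count above, to produce exactly $\alpha=5-6\varphi(-3)$ in the first case and $\alpha=1$ in the second.

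Finally, for the surviving one‑parameter sum I would substitute $\chi=\omega^a$, apply the Gross–Koblitz formula (Theorem \ref{thm2_3}), and use the product formulas of Lemma \ref{lemma-3_1} and Lemma \ref{lemma-3_2} with $t=3$ to rewrite the Gauss‑sum expression (of the shape $g(T^{-l})^3g(T^{3l})$ carrying the character $T^{-3l}(-3a)$) in terms of $p$-adic gamma values. Matching this against Definition \ref{defin1} naturally yields the diagonal cubic form $q\cdot{_2}G_2[\tfrac13,\tfrac23;0,0\,|\,a^{3}]$, precisely the shape expected from $X^3+Y^3+Z^3$. Applying the transformation of Lemma \ref{lemma_4.4}(3) with $t=a^3$ then converts this into $\varphi(-3a^3)\cdot q\cdot{_2}G_2[\tfrac12,\tfrac12;\tfrac16,\tfrac56\,|\,a^{-3}]$, and since $\varphi(-3a^3)=\varphi(-3a)$ this delivers the main term with the stated argument $a^{-3}$ and prefactor $-q\,\varphi(-3a)$. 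The principal obstacle will be the precise bookkeeping of signs and of the quadratic and cubic character factors, so that the degenerate terms, the points at infinity, and the extra $q\equiv1\pmod3$ contributions collapse cleanly into $\alpha-1$ while the main term emerges with exactly the factor $-q\,\varphi(-3a)$.
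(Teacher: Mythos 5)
First, a caveat: the paper never proves this statement — it is imported verbatim from \cite[Theorem 3.3]{BS2} and used as a black box in the proof of Theorem \ref{MT-6} — so your attempt can only be measured against the statement itself, not against an internal proof.

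Your outline has one genuine, fatal flaw: you identify $\#C_a(\mathbb{F}_q)$ with the number of \emph{projective} points on $X^3+Y^3+Z^3=3aXYZ$, whereas in this theorem $\#C_a(\mathbb{F}_q)$ is the \emph{affine} count of pairs $(x,y)\in\mathbb{F}_q^2$ with $x^3+y^3+1=3axy$. Your own bookkeeping shows why the projective reading cannot give the stated constant. Write $c\in\{1,3\}$ for the number of cube roots of $-1$ in $\mathbb{F}_q$. The degenerate pieces ($x=0$ or $y=0$) contribute $2c-2$ to the count, and the extra terms in the main range when $q\equiv1\pmod 3$ (those with $m=l+\tfrac{q-1}{3}$ and $m=l+\tfrac{2(q-1)}{3}$) contribute exactly $-6$: by the Hasse--Davenport product relation $g(T^{-l})g(T^{-l}\chi_3)g(T^{-l}\overline{\chi}_3)=T^{3l}(3)g(T^{-3l})g(\chi_3)g(\overline{\chi}_3)$, together with $g(\chi_3)g(\overline{\chi}_3)=q$, Lemma \ref{lemma2_1}, and $\sum_{l}T^{-3l}(a)=0$ (this is where $a^3\neq1$ enters), these terms collapse to $-6q$ inside the sum. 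So the affine constant is $2c-2$ or $2c-2-6$, i.e. $0$ or $-2$, which is exactly $\alpha-1$ (note $\varphi(-3)=1$ holds automatically when $q\equiv1\pmod 3$, so $\alpha=-1$ there). Adding the $c$ points at infinity, as you propose, turns this constant into $+1$ in both cases: the projective count equals $q+1-q\varphi(-3a)\cdot{_2}G_2[\cdots]$ uniformly, and this never equals $\alpha-1+q-q\varphi(-3a)\cdot{_2}G_2[\cdots]$. Executed faithfully, your plan therefore terminates in a contradiction with the theorem rather than a proof of it.

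That the affine reading is the intended one is forced by the paper's own usage: the proof of Theorem \ref{MT-6} invokes the result as $q-\#C_t(\mathbb{F}_q)=q\varphi(-3t)\cdot{_2}G_2[\cdots]$ with no correction at infinity, and a hand count over $\mathbb{F}_5$ (the curves $C_2,C_3,C_4$ have $8,5,2$ affine points, hence $9,6,3$ projective points) shows that only the affine values satisfy the identity $\sum_{t}\varphi(t(t^3-1))\#C_t(\mathbb{F}_q)=1$ of Theorem \ref{MT-6}. Apart from this misidentification, your machinery is sound and close in spirit to Section 3 of the paper: the additive-character expansion, the orthogonality constraints, the bifurcation on $3(l-m)\equiv0\pmod{q-1}$, Gross--Koblitz with Lemmas \ref{lemma-3_1} and \ref{lemma-3_2}, and the final conversion via Lemma \ref{lemma_4.4}(3) are the right tools; the fix is to compute the affine count only, and to track signs so that the main term enters as $-q\cdot{_2}G_2\left[\tfrac13,\tfrac23;0,0\,|\,a^3\right]$ alongside the constants above.
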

\begin{proof}[Proof of Theorem \ref{MT-6}]
Corollary \ref{cor-6} $(1)$	gives
\begin{align*}
	\sum_{t\in\mathbb{F}_q}\varphi(t(t-1)){_{2}}G_{2}\left[\begin{array}{cc}
		\frac{1}{3},  & \frac{2}{3} \\
		0, & 0
	\end{array}|t
	\right]_q&=\sum_{t\in\mathbb{F}_q^\times}\varphi(t(t-1)){_{2}}G_{2}\left[\begin{array}{cc}
		\frac{1}{3},  & \frac{2}{3} \\
		0, & 0
	\end{array}|t
	\right]_q\\
	&=-1-q.
\end{align*}
Using Lemma \ref{lemma_4.4} $(3)$, 
\begin{align*}
	\sum_{t\in\mathbb{F}_q^\times}\varphi(t(t-1)){_{2}}G_{2}\left[\begin{array}{cc}
		\frac{1}{3}, & \frac{2}{3}\vspace*{0.1cm}\\
		0,  & 0\\
	\end{array}|t
	\right]_q&=q\cdot\sum_{t\in\mathbb{F}_q^\times}\varphi(3(1-t)){_{2}}G_{2}\left[\begin{array}{cc}
		\frac{1}{2}, & \frac{1}{2}\vspace*{0.1cm}\\
		\frac{1}{6},  & \frac{5}{6} \\
	\end{array}|\frac{1}{t}
	\right]_q\\
	&=-1-q.
\end{align*}
Since $q\not\equiv1\pmod3$, therefore $t\mapsto t^3$ is an automorphism on $\mathbb{F}_q^\times$. This yields
\begin{align*}
	q\cdot\sum_{t\in\mathbb{F}_q^\times}\varphi(3(1-t^3)){_{2}}G_{2}\left[\begin{array}{cc}
		\frac{1}{2}, & \frac{1}{2}\vspace*{0.1cm}\\
		\frac{1}{6},  & \frac{5}{6} \\
	\end{array}|\frac{1}{t^3}
	\right]_q&=q\cdot\sum_{t\in\mathbb{F}_q^\times,t\neq1}\varphi(3(1-t^3)){_{2}}G_{2}\left[\begin{array}{cc}
		\frac{1}{2}, & \frac{1}{2}\vspace*{0.1cm}\\
		\frac{1}{6},  & \frac{5}{6} \\
	\end{array}|\frac{1}{t^3}
	\right]_q\\
	& = -1-q.
\end{align*}
Let $C_t:x^3+y^3+1=3txy$, where $t\in\mathbb{F}_q^\times$ such that $t\neq1$. Using Theorem \ref{theorem-2}, we obtain
\begin{align*}
	-1-q&=\sum_{t\in\mathbb{F}_q^\times,t\neq1}\varphi(t(t^3-1))(q-\#C_t(\mathbb{F}_q))\\
	&=q\cdot\sum_{t\in\mathbb{F}_q}\varphi(t(t^3-1))-\sum_{t\in\mathbb{F}_q^\times,t\neq1}\varphi(t(t^3-1))\#C_t(\mathbb{F}_q)\\
	&=q\cdot\sum_{t\in\mathbb{F}_q}\varphi(t(t-1))-\sum_{t\in\mathbb{F}_q^\times,t\neq1}\varphi(t(t^3-1))\#C_t(\mathbb{F}_q).
\end{align*}
Using the fact that $\sum_{t\in\mathbb{F}_q}\varphi(t(t-1))=-1$, we obtain the desired result.
\end{proof}

	\end{document}